\newtheorem{theorem}{Theorem}[section]
\newtheorem{lemma}{Lemma}[section]
\newtheorem{remark}{Remark}[section]
\newtheorem{proposition}{Proposition}[section] 
\newtheorem{corollary}{Corollary}[section]
\newtheorem{assumption}{Assumption}[section]
\newtheorem{definition}{Definition}[section]
\begin{document}
\title{Foundations of locally-balanced Markov processes}
\author[Samuel Livingstone, Giorgos Vasdekis and Giacomo Zanella]{}
% \author{Andrea Bertazzi\textsuperscript{1}}
% \thanks{\textsuperscript{1} CMAP, Ecole polytechnique, Paris, France. Email: \texttt{andrea.bertazzi@polytechnique.edu}}

% \author{Giorgos Vasdekis\textsuperscript{2}}
% \thanks{\textsuperscript{2} School of Mathematics, Statistics and Physics, Newcastle University, Newcastle upon Tyne, U.K. Email: \texttt{giorgos.vasdekis@newcastle.ac.uk}}

\date{}

\maketitle
\vspace{-10pt}
\begin{center}
    \textsc{Samuel Livingstone} \\
    Department of Statistical Science, University College London,  U.K.\\
    \texttt{samuel.livingstone@ucl.ac.uk} \\
    \bigskip
    \textsc{Giorgos Vasdekis} \\
	School of Mathematics, Statistics and Physics, Newcastle University, U.K. \\ \texttt{giorgos.vasdekis@newcastle.ac.uk}\\
    \bigskip
    \textsc{Giacomo Zanella} \\
	Department of Decision Sciences and BIDSA, Bocconi University, Italy \\
    \texttt{giacomo.zanella@unibocconi.it}
\end{center}

\begin{abstract}
    We formally introduce and study locally-balanced Markov jump processes (LBMJPs) defined on a general state space. These continuous-time stochastic processes with a user-specified limiting distribution are designed for sampling in settings involving discrete parameters and/or non-smooth distributions, addressing limitations of other processes such as the overdamped Langevin diffusion. The paper establishes the well-posedness, non-explosivity, and ergodicity of LBMJPs under mild conditions. We further explore regularity properties such as the Feller property and characterise the weak generator of the process. We then derive conditions for exponential ergodicity via spectral gaps and establish comparison theorems for different balancing functions. In particular we show an equivalence between the spectral gaps of Metropolis--Hastings algorithms and LBMJPs with bounded balancing function, but show that LBMJPs can exhibit uniform ergodicity on unbounded state spaces when the balancing function is unbounded, even when the limiting distribution is not sub-Gaussian. We also establish a diffusion limit for an LBMJP in the small jump limit, and discuss applications to Monte Carlo sampling and non-reversible extensions of the processes.
\end{abstract}

{\bf Keywords: }Markov Processes, Sampling Algorithms,  Mixing Times,  Ergodicity, Markov Chain Monte Carlo, Locally-balanced processes

\section{Introduction}

Continuous-time stochastic processes with a user-specified limiting distribution are the backbone of modern sampling algorithms, making them an indispensable tool for modern scientific research.  Celebrated examples include the overdamped Langevin diffusion \cite{roberts2002langevin,xifara2014langevin}, its kinetic/underdamped counterpart \cite{dalalyan2020sampling}, randomized Hamiltonian Monte Carlo \cite{bou2017randomized}, and piecewise-deterministic Markov processes such as the zigzag and bouncy particle samplers \cite{bierkens2019zig,bouchard2018bouncy}.  These processes are now well understood, mixing properties have been meticulously studied (e.g.\ \cite{roberts1996exponential,mattingly2002ergodicity,bou2017randomized,bierkens.roberts.zitt:19,vasdekis2022note,durmus.guillin.monmarche:21,deligiannidis2019exponential}), and the resulting sampling algorithms have been successfully used in many applied domains (e.g.\ \cite{diggle2019modeling,rossky1978brownian,duane1987hybrid,betancourt2015hamiltonian,hardcastle2024averaging,koskela2022zig}).

An important component of the above processes is their use of gradients to drive sample paths into regions of high probability under the limiting distribution. The overdamped Langevin diffusion, for example, is governed by the equation $dX_t = -(1/2)\nabla U(X_t)dt + dB_t$ and has a limiting distribution with density $\pi(x) \propto e^{-U(x)}$.  To simulate the process, the potential $U$ must therefore be sufficiently smooth. This restricts applicability to smooth manifolds (e.g.\ \cite{betancourt2017geometric,livingstone2014information}).  Many problems requiring sampling algorithms are defined on more general spaces involving discrete parameters and/or non-smooth distributions of interest.  There is, therefore, a need to develop and study processes that can be applied in these settings.

Locally-balanced Markov processes are a relatively new class of pure jump-type stochastic processes that have been designed for this purpose.  Based on the locally informed algorithm of \cite{zanella2020informed}, the continuous-time formulation in finite state spaces was introduced by \cite{power2019accelerated}.  Many approaches for both discrete and continuous sampling problems have since been motivated by these processes, such as discrete state-space algorithms \cite{zhou2022dimension,chang2024dimension,grathwohl2021oops,zhang2022langevin,liang2023adaptive,van2022g,liang2023structure,liang2022adaptive,sansone2022lsb,sun2022optimal,sun2023any}, multiple try algorithms \cite{gagnon2023improving,chang2022rapidly}, gradient-based algorithms \cite{livingstone.zanella:22,vogrinc.livingstone.zanella:22,mauri.zanella:24,hird.livingstone.zanella:22}, reversible jump algorithms \cite{gagnon2021informed}, use as a Stein operator \cite{shi2022gradient} and importance sampling of Markov chain sample paths \cite{zhou2022rapid,li2023importance}.  

In this article we formally introduce and study the locally-balanced Markov process defined on a general state space.  In \Cref{section.def.LBMJP:00} we define the process and in \Cref{sec:wellposed} we show that it is well-defined, non-explosive and has a user-defined invariant distribution, and that it is ergodic under mild conditions. In \Cref{sec::Feller} we prove regularity properties, such as the Feller property, while in \Cref{sec::Generator} we study the generator of the process. In \Cref{sec::Mixing} we focus on mixing properties. In particular, in \Cref{sec:expo.ergo:000} we provide conditions under which the process converges exponentially quickly to equilibrium, and provide comparison theorems to compare the spectral gaps of different locally-balanced processes.  In \Cref{subsec:unif_erg} we show that in some cases the process is uniformly ergodic on unbounded state spaces, even when the limiting distribution is not sub-Gaussian. We note that this is qualitatively different behaviour to the overdamped Langevin diffusion, which is typically not uniformly ergodic when the limiting distribution is not sub-Gaussian.  In \Cref{convergence.diffusion:000} we consider a particular regime in which we show that the process has a diffusion limit. Finally, in \Cref{sec::Monte Carlo} we discuss using the locally-balanced Markov process for Monte Carlo sampling, and in \Cref{sec::Non-reversible} we suggest nonreversible extensions of the process, a topic of recent interest among the sampling community (e.g.\ \cite{diaconis.holmes.neal:00,andrieu2021peskun,faulkner2024sampling}). The appendices contain some of the longer and more technical proofs.

\subsection{Notation}
Let $E$ be a Polish space. For any measure $\pi$ defined on the measurable space $(E,\mathcal{E})$, if $\pi$ admits a density with respect to some dominating measure, we will write $\pi$ to denote both the measure and the density when there is no ambiguity. We write $\textup{supp}(\pi)$ to indicate the support of $\pi$. Furthermore, for a Markov kernel $\gamma:E\ \times \mathcal{E} \rightarrow [0,1]$, we will sometimes write $[\pi \otimes \gamma]$ to denote the measure on $E \times E$ defined as $[\pi \otimes \gamma](dx,dy) := \pi(dx)\gamma(x,dy)$.

We denote the indicator function of the set $A$ as $\mathbb{I}_A$. We write $L^1(\pi), L^2(\pi)$ and $\mathcal{B}(E)$ to denote  the set of functions $f: E \rightarrow \mathbb{R}$ that are respectively integrable with respect to $\pi$, square-integrable with respect to $\pi$ and bounded. For $f,g \in L^2(\pi)$ we write $\langle f,g \rangle_\pi:=\int_Ef(x)g(x)\pi(dx)$. We write $C_b(E)$ and $C_0(E)$ to denote the continuous real functions $f$ with domain $E$ that are bounded or converging to zero at the boundary of $E$ respectively. Finally, in the case where $E=\mathbb{R}^d$ we write $C^{\infty}$ for the set of functions $f: \mathbb{R}^d \rightarrow \mathbb{R}$ that are infinitely differentiable and $C^{\infty}_c$ for those that are infinitely differentiable with compact support.  The composition $f \circ h(x) := f(h(x))$ for any two functions $f,h$ such that the range of $h$ is within the domain of $f$

\section{Definition and basic properties}

\subsection{Definition of the process}\label{section.def.LBMJP:00}

Consider a Borel measure space $(E,\mathcal{E})$ where $E$ is Polish. Consider also an underlying probability space $(\Omega, \mathcal{F}, \mathbb{P})$. Let $\pi$ be a measure on $(E,\mathcal{E})$ with $\pi(E) = 1$, and $\gamma:E\times \mathcal{E} \to [0,1]$ be a Markov transition kernel on $(E,\mathcal{E})$. We denote by $t(x,y)$ the Radon--Nikodym derivative
\begin{equation} \label{eq:radon_nikodym}
t(x,y) := \frac{\pi(dy)\gamma(y,dx)}{\pi(dx)\gamma(x,dy)} = \frac{[\pi \otimes \gamma](dy,dx)}{[\pi \otimes \gamma](dx,dy)},
\end{equation}
and note that a version of $t(x,y)$ is always well-defined and satisfies $0 < t(x,y) < \infty$ and $t(y,x) = 1/t(x,y)$ for all $(x,y) \in R$ for some $R \subset E \times E$ satisfying $[\pi \otimes \gamma](R) = 1$, and $t(x,y) := 0$ for $(x,y) \notin R$, by Proposition 1 of \cite{tierney1998note}. 
%Throughout we make the following standing assumption.

Before defining a locally-balanced Markov jump process in Definition \ref{def:LBMJP}, we must first introduce the notion of a balancing function.

\begin{definition}[Balancing function]
    We call $g:\mathbb{R}_{\geq 0} \to \mathbb{R}_{\geq 0}$ a \emph{balancing function} if $g(1) = 1$ and $g$ satisfies the balancing property
\begin{equation} \label{eq:balancing}
g(t) = tg(1/t)
\end{equation}
for all $t >0$, with the conventions that $g(0) := 0$ and $g(t)>0$ for $t>0$.
\end{definition} 

\begin{remark} 
The requirement $g(1)=1$ is essentially without loss of generality, as a different choice of balancing function $\tilde{g} = c\cdot g$ such that $\tilde{g}(1) = c$ for some $c > 0$ will induce a process with the same law at time $t$ as the original process at time $ct$, for all $t >0$. 
\end{remark}

\begin{remark}
There are an infinite number of balancing functions. For example, there is a bijection between the space of balancing functions and the space $\left\{ f: [0,1) \rightarrow \mathbb{R}_{> 0}   \right\}$, since one can always set 
\begin{equation*}
    g(t) =
\begin{cases}
  f(t) & \text{if } t \in (0,1)\\
  1 & \text{if } t=1 \\
  t f(1/t) & \text{if } t >1.
\end{cases}
\end{equation*}
Furthermore, it is easy to see that the space of balancing functions is convex with respect to addition and multiplication, i.e.\ for any $g_1, g_2$ balancing and $a \in (0,1)$, $a g_1+(1-a) g_2$ and $g_1^a g_2^{1-a}$ is also balancing. Finally, for any even function $h: \mathbb{R} \rightarrow \mathbb{R}_{>0}$, the function $g_h(t)=\sqrt{t}h(\log t)$ is also balancing (see e.g.\ Proposition 1 of \cite{vogrinc.livingstone.zanella:22}).  
\end{remark}

\begin{remark}
Popular choices of $g$ include the bounded functions $g(t)=\min(1 ,t)$ and $g(t)=2t/\left( 1+t  \right)$, as well as the unbounded choices $g(t)=\max(1, t)$ and $g(t) = t^\alpha\mathbb{I}_{[0,1)}(t) + t^{1-\alpha}\mathbb{I}_{[1,\infty)}(t)$ for any $a >0$ (e.g.\ setting $a=1/2$ gives $g(t) = \surd t$).
\end{remark}

Given $g$ and the Radon--Nikodym derivative $t(x,y)$ we can define the function
\begin{equation} \label{eq:lambda}
\lambda(x) := \int_E g \circ t(x,y)\gamma(x,dy)
\end{equation}
%to be the jump intensity from any state $x \in E$.  We will also write 
and the operator
\begin{equation}\label{def.gamma.g:1}
\Gamma_g f := \int_E f(y) \frac{g \circ t(x,y)}{\lambda(x)} \gamma(x,dy)
\end{equation}
for any $f \in L^2(\pi)$.  We use $\Gamma_g$ to denote both the operator and the corresponding Markov kernel $\Gamma_g(x,A) := \Gamma_g \mathbb{I}_A(x)$ for any $x \in E$ and $A \in \mathcal{E}$ when there is no ambiguity.  
We will often make the following assumption.
\begin{assumption}\label{lambda.positivity:00}
  For all $x \in \textup{supp}(\pi)$,  $\lambda(x) > 0$.
\end{assumption}

With these ingredients we are now able to define the process.

%With these ingredients we are now able to define a locally-balanced Markov jump process (LBMJP) with balancing function $g$, base kernel $\gamma$, target distribution $\pi$ and state space $E \cup \partial$, where $\partial$ is called a graveyard state. 

\begin{definition}[Locally-balanced Markov jump process]\label{def:LBMJP}
Let $x \in \textup{supp}(\pi)$, and consider a discrete time Markov Chain $X=(X_n)_{n \in \mathbb{N}}$, with $X_0=x$ and transition kernel $\Gamma_g$ as in (\ref{def.gamma.g:1}). Consider also a sequence of conditionally independent given $X$ random variables 
\begin{equation*}
    \tau_n \sim \textup{Exp}\left( \lambda(X_{n-1}) \right),
\end{equation*}
for $n \geq 1$, with $\lambda$ as in \eqref{eq:lambda}. Set $T_0=0$ and
\begin{equation*}
    T_n :=\sum_{k=1}^n \tau_k
\end{equation*}
for all $n \in \mathbb{N}$. Given $(X_n)_{n \geq 0}$ and $(T_n)_{n \geq 0}$, the \emph{Locally-Balanced Markov Jump Process (LBMJP)} with balancing function $g$, base kernel $\gamma$, target distribution $\pi$ and state space $E \cup \partial$ is $\left(Y_t\right)_{t \geq 0}$ such that $Y_0 = x$ and 
\begin{equation*}
    Y_t = 
    \begin{cases}
        X_{\sup\{n\,:\,T_n \leq t\}} \ ,  & \text{if } t < \sup\{T_n \,:\, n \in \mathbb{N} \}
        \\
        \partial \ , & \text{otherwise}.
    \end{cases}
\end{equation*}
We refer to $\partial$ as the graveyard state.
\end{definition}

Note that the above construction is problematic if $\lambda$ takes infinite values in a set of positive Lebesgue measure. In \Cref{sec:wellposed}, however, we provide conditions on $\lambda$, $g$, $\gamma$, and $\pi$ under which the process is well-posed and non-explosive, meaning that $\lambda$ is $\pi$-almost surely (a.s.) finite and the process will never reach the graveyard state $\partial$. 

The jump kernel associated to $(Y_t)_{t \geq 0}$ is defined for any $x \in E$ and any $A \in \mathcal{E}$ as $J(x,A) := \lambda(x)\Gamma_g(x,A)$. This can also be written
\begin{equation}
    J(x,dy) = g \circ t(x,y)\gamma(x,dy),
\end{equation}
from which the below proposition is straightforward.

\begin{proposition} \label{prop:reversible_jumpkernel}
    The jump kernel $J$ associated with a LBMJP $(Y_t)_{t \geq 0}$ satisfies 
    $$
    \int_{A\times B}\pi(dx)J(x,dy)=\int_{A \times B}\pi(dy)J(y,dx)
    $$
    for any $A,B \in \mathcal{E}$.
\end{proposition}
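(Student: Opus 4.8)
The plan is to recognise this as a detailed-balance statement and to prove it at the level of the joint measure on $E\times E$. Writing $\mu(dx,dy):=\pi(dx)J(x,dy)$, the two sides of the claim are precisely $\mu(A\times B)$ and $\mu(B\times A)$, so it suffices to show that $\mu$ is invariant under the coordinate-swap map $S(x,y)=(y,x)$ on all measurable rectangles. First I would rewrite $\mu$ using the jump kernel: since $J(x,dy)=g\circ t(x,y)\gamma(x,dy)$, we have
$$
\mu(dx,dy)=g\circ t(x,y)\,[\pi\otimes\gamma](dx,dy),
$$
and the task reduces to showing this measure is $S$-invariant.

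The heart of the argument combines two facts, both valid on the full-measure set $R$ from \eqref{eq:radon_nikodym}: the Radon--Nikodym identity $t(x,y)\,[\pi\otimes\gamma](dx,dy)=[\pi\otimes\gamma](dy,dx)$, and the balancing property $g(t)=t\,g(1/t)$ together with $t(y,x)=1/t(x,y)$. Chaining these gives, $[\pi\otimes\gamma]$-almost everywhere,
$$
g\circ t(x,y)\,[\pi\otimes\gamma](dx,dy)=t(x,y)\,g\circ t(y,x)\,[\pi\otimes\gamma](dx,dy)=g\circ t(y,x)\,[\pi\otimes\gamma](dy,dx).
$$
Integrating the left-hand side over $A\times B$ returns $\int_A\pi(dx)J(x,B)$, while integrating the right-hand side over $A\times B$ and applying the change of variables $(x,y)\mapsto(y,x)$ converts it into $\int_{B\times A}g\circ t(x,y)\,[\pi\otimes\gamma](dx,dy)=\int_B\pi(dy)J(y,A)$, which is exactly the required equality $\mu(A\times B)=\mu(B\times A)$.

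The main obstacle is the measure-theoretic bookkeeping around the exceptional set $R^c$: the relations $0<t<\infty$ and $t(y,x)=1/t(x,y)$ hold only on $R$, and $g$ is set to vanish where $t=0$, so I must verify that the displayed manipulations are unaffected by the null set. This needs $R$ to be symmetric under $S$; I would note that Tierney's construction can be taken with $R=S(R)$, or else replace $R$ by $R\cap S(R)$, which still carries full $[\pi\otimes\gamma]$-measure because $[\pi\otimes\gamma]$ and its swap are mutually absolutely continuous on $R$ (the density being the finite, positive function $t$). Once the pointwise identities are established $[\pi\otimes\gamma]$-a.e., the remaining steps — rewriting the rectangle integrals as iterated integrals and carrying out the coordinate swap — are routine applications of Tonelli's theorem and the change-of-variables formula for pushforward measures, so I do not expect further difficulty there.
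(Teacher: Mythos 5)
Your proposal is correct and follows essentially the same route as the paper: both proofs reduce the claim to the symmetry of the measure $g \circ t(x,y)\,[\pi\otimes\gamma](dx,dy)$ under the coordinate swap, obtained by combining the balancing identity $g(t)=t\,g(1/t)$ with the Radon--Nikodym relation defining $t(x,y)$ (this is precisely \eqref{eq:symm_measure} in the paper). Your additional care with the exceptional set $R^c$ and the symmetrisation $R \cap S(R)$ simply makes explicit the bookkeeping that the paper delegates to Proposition 1 of \cite{tierney1998note}.
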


\begin{proof}
Note first that $\pi(dx)J(x,dy) = g \circ t(x,y)[\pi \otimes \gamma](dx,dy)$. The key step of the argument is that
\begin{align}\label{eq:symm_measure}
g \circ t(x,y)[\pi \otimes \gamma](dx,dy) = g \circ t(y,x) [\pi \otimes \gamma](dy,dx),
\end{align}    
which is simply a rearrangement of \eqref{eq:balancing} when $t(x,y)$ is chosen as in \eqref{eq:radon_nikodym}. The right-hand side of \eqref{eq:symm_measure} is equal to $\pi(dy)J(y,dx)$, which concludes the proof.
\end{proof}

A simple method to simulate the process is given in Algorithm \ref{alg:basic}.
\begin{algorithm}
\caption{Simulation of a locally-balanced Markov jump process}\label{alg:basic}
\begin{algorithmic}
\Require $x \in E$, $T^*>0$
\State Set $i \gets 0$, $T_0 \gets 0$, $X_0 \gets x$
\While{$T_i < T^*$}
\State Draw $\tau_{i+1} \sim \textup{Exp}(\lambda(X_i))$
\State Set $T_{i+1} \gets T_i + \tau_{i+1}$
\State Set $Y_t \gets X_i$ for all $t \in [T_i, T_{i+1})$.
\State Draw $X_{i+1} \sim \Gamma_g(X_i,\cdot)$
\State $i \gets i+1$.
\EndWhile
\end{algorithmic}
\Return $(Y_t)_{0 \leq t \leq T^*}$.
\end{algorithm}

\subsection{Well-posedness, non-explosivity and ergodicity} \label{sec:wellposed}

%In this section we study the stability and ergodicity of LBMJPs, and in particular provide conditions that guarantee that a LBMJP is well-defined (meaning that $\lambda(x)$ is finite), that it does not explode (meaning that it never reaches the graveyard state) and that it is ergodic.
%A key message of this section is that restricting to non-decreasing balancing functions $g$, as stated in the following assumption, is enough to guarantee most of the above properties. 

The below non-decreasing assumption on $g$ will prove sufficient to establish a number of fundamental properties for a locally-balanced Markov jump process.

\begin{assumption}\label{ass:g_non_decr}
$g:\mathbb{R}_{\geq 0} \rightarrow \mathbb{R}_{\geq 0}$ is non-decreasing and continuous.
\end{assumption}

\begin{remark}
    Assuming that $g$ is non-decreasing is arguably very natural when the kernel $\gamma$ is symmetric with respect to some canonical reference measure $\mu$ (e.g.\ Lebesgue or counting), in which case $t(x,y)=f(y)/f(x)$ with $f=d\pi/d\mu$, and $g$ gives higher weight to areas with higher density $f$. More generally, however, any choice of locally balancing function will have the correct stationary measure.  We leave exploration of other choices for future work.
\end{remark}

The following lemma details some constraints on $g$ imposed by \Cref{ass:g_non_decr}.
\begin{lemma}\label{lem:gbound1}
Let $g$ be a balancing function satisfying \Cref{ass:g_non_decr}. Then
\begin{enumerate}
    \item[(i)]
$g(t) \leq 1+t $ for all $t \geq 1$.
\item[(ii)] For all $t>0$,
\begin{equation}\label{eq:gbound1}
\min(1,t)
\leq
g(t)
\leq
\max(1,t).
\end{equation}
\end{enumerate}

\end{lemma}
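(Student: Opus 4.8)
The plan is to derive everything from the defining balancing identity $g(t) = t\,g(1/t)$ together with the monotonicity in \Cref{ass:g_non_decr}, exploiting the single structural fact that the map $t \mapsto 1/t$ reverses the order of $t$ relative to the fixed point $t=1$, where $g(1)=1$. Continuity will play no role in the inequalities themselves; monotonicity alone does the work, with the balancing property used precisely to convert statements about arguments $\geq 1$ into statements about arguments $\leq 1$ and vice versa.

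For part (i), I would fix $t \geq 1$. Then $1/t \leq 1$, so monotonicity gives $g(1/t) \leq g(1) = 1$. Substituting into the balancing identity yields
\[
g(t) = t\,g(1/t) \leq t \leq 1+t,
\]
which is the claim. Note that this argument in fact delivers the sharper bound $g(t) \leq t$ on $[1,\infty)$, and the stated form with $1+t$ then follows a fortiori.

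For part (ii) I would split at $t=1$ and treat the two inequalities separately. For the upper bound $g(t) \leq \max(1,t)$: when $t \geq 1$ the computation above already gives $g(t) \leq t = \max(1,t)$; when $0 < t \leq 1$, monotonicity directly gives $g(t) \leq g(1) = 1 = \max(1,t)$. For the lower bound $g(t) \geq \min(1,t)$: when $t \geq 1$, monotonicity gives $g(t) \geq g(1) = 1 = \min(1,t)$; when $0 < t \leq 1$ I would instead invoke the identity, noting $1/t \geq 1$ so that $g(1/t) \geq g(1) = 1$, whence $g(t) = t\,g(1/t) \geq t = \min(1,t)$. Combining the four cases gives \eqref{eq:gbound1}.

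I do not anticipate a genuine obstacle, as the statement is essentially a bookkeeping consequence of the balancing property. The only point requiring any care is to apply monotonicity to the correct argument in each regime — directly to $t$ when $t \geq 1$, but to $1/t$ after first rewriting $g(t)$ via the identity when $t \leq 1$ — that is, to keep track of which side of $1$ the relevant argument lies on so that each inequality points in the intended direction.
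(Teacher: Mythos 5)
Your proof is correct and follows essentially the same route as the paper's: monotonicity of $g$ together with $g(1)=1$ bounds $g$ on one side of $t=1$, and the balancing identity $g(t)=t\,g(1/t)$ transfers the bound to the other side. The paper's proof likewise obtains the sharper intermediate bound $g(t)\leq t$ for $t\geq 1$ before weakening it to $1+t$, so there is nothing to distinguish the two arguments.
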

\begin{proof}
(i) Since $g(1) = 1$ then by \Cref{ass:g_non_decr} $g(t)\leq 1$ for $t \in [0,1]$, from which direct calculation shows that $g(t) = tg(1/t) \leq t$ for $t \geq 1$. Combining gives the result.  (ii) For $t>1$ then $g(t) \geq 1$ follows immediately from the non-decreasing assumption, and for $t<1$ direct calculation gives $g(t) = tg(1/t) \geq t$, from which it follows that $g(t) \geq \min(1,t)$.  Switching the direction of the inequalities in each case gives the upper bound $g(t) \leq \max(1,t)$.
\end{proof}

 \Cref{lem:gbound1} implies that non-decreasing balancing functions can grow at most linearly, which is in fact enough to deduce that the associated locally-balanced Markov jump process is both $\pi$-a.e. well-defined (\Cref{prop:well-posed}) and non-explosive (\Cref{non.explo.theorem:1}).

\begin{proposition}[Well-posedness]\label{prop:well-posed}
If $g$ satisfies \Cref{ass:g_non_decr} then the following hold:
\begin{enumerate}
    \item[(a)]  $\lambda(x)<\infty$ for $\pi$-almost every $x$.
    \item[(b)] $Z_\lambda:=\int_E \lambda(x)\pi(dx)<\infty$, meaning $\tilde{\pi}(dx)=Z_{\lambda}^{-1} \lambda(x)\pi(dx)$ is a proper probability measure on $E$.
    \item[(c)] $\Gamma_g$ is a $\tilde{\pi}$-reversible discrete-time Markov transition kernel.
\end{enumerate}
\end{proposition}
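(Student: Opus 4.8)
The plan is to prove (b) first by a direct computation on the product space, deduce (a) as an immediate consequence, and finally establish (c) by writing down the measure $\tilde{\pi}$ explicitly and reusing the symmetry already recorded in \Cref{prop:reversible_jumpkernel}.

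For (b), I would rewrite $Z_\lambda$ as a single integral: by the definition \eqref{eq:lambda} of $\lambda$ and Tonelli's theorem,
\[
Z_\lambda = \int_E \left( \int_E g\circ t(x,y)\,\gamma(x,dy)\right)\pi(dx) = \int_{E\times E} g\circ t(x,y)\,[\pi\otimes\gamma](dx,dy).
\]
The key estimate is $g(s)\le 1+s$ for all $s\ge 0$, which follows from \Cref{lem:gbound1} (part (i) covers $s\ge 1$, while $g(s)\le 1\le 1+s$ for $s\in[0,1]$). Applying this with $s=t(x,y)$ splits the integral into two pieces. The first is $\int_{E\times E}[\pi\otimes\gamma](dx,dy)=1$. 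For the second I would use the defining identity \eqref{eq:radon_nikodym}: on the set $R$ of full $[\pi\otimes\gamma]$-measure one has $t(x,y)[\pi\otimes\gamma](dx,dy)=[\pi\otimes\gamma](dy,dx)$, while $t(x,y)=0$ off $R$, so that $\int_{E\times E} t(x,y)[\pi\otimes\gamma](dx,dy)=\int_{R}[\pi\otimes\gamma](dy,dx)\le 1$, the bound holding because the coordinate-swap map preserves total mass. Combining yields $Z_\lambda\le 2<\infty$, which is (b); part (a) is then immediate, since a nonnegative function that is $\pi$-integrable by (b) must be finite $\pi$-almost everywhere.

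For (c), I would verify reversibility by computing the joint measure $\tilde{\pi}(dx)\Gamma_g(x,dy)$ directly. Substituting the definitions of $\tilde{\pi}$ and of $\Gamma_g$ from \eqref{def.gamma.g:1}, the factor $\lambda(x)$ cancels and one obtains
\[
\tilde{\pi}(dx)\Gamma_g(x,dy) = Z_\lambda^{-1}\, g\circ t(x,y)\,[\pi\otimes\gamma](dx,dy) = Z_\lambda^{-1}\,\pi(dx)J(x,dy).
\]
Symmetry of the right-hand side under interchange of $x$ and $y$ is exactly the content of \Cref{prop:reversible_jumpkernel}, which gives $\tilde{\pi}$-reversibility of $\Gamma_g$. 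The one point requiring care is the cancellation of $\lambda(x)$ where it may vanish: since $\tilde{\pi}$ assigns zero mass to $\{\lambda=0\}$, the kernel $\Gamma_g$ is well-defined $\tilde{\pi}$-almost everywhere and the displayed identity holds as an equality of measures. I expect the main obstacle throughout to be the careful measure-theoretic bookkeeping of the Radon--Nikodym factor $t(x,y)$ — in particular restricting attention to the set $R$ on which \eqref{eq:radon_nikodym} is valid, and ensuring the swap-invariance step in (b) is applied to genuine measures rather than to densities.
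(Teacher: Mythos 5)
Your proposal is correct and follows essentially the same route as the paper's own proof: part (b) via the bound $g(t)\leq 1+t$ from \Cref{lem:gbound1} combined with the swap identity encoded in \eqref{eq:radon_nikodym} to get $Z_\lambda\leq 2$, part (a) as an immediate consequence, and part (c) by cancelling $\lambda(x)$ against $\tilde{\pi}$ and invoking the symmetry \eqref{eq:symm_measure}. Your two extra care points (extending the bound $g(s)\leq 1+s$ to $s\in[0,1]$, and noting $\tilde{\pi}(\{\lambda=0\})=0$ so the cancellation is legitimate) are correct refinements the paper leaves implicit, not substantive deviations.
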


\begin{proof}[Proof of \Cref{prop:well-posed}]
We first show (b), which implies part (a). 
Using \Cref{lem:gbound1}(i) and the definition of $t(x,y)$ in \eqref{eq:radon_nikodym} gives
\begin{align}
Z_\lambda=\int_{E}\lambda(x)\pi(dx)
&=
\int_{E\times E}g\left( t(x,y)\right)\gamma(x,dy)\pi(dx)\\
&\leq
\int_{E\times E}(1+t(x,y))\gamma(x,dy)\pi(dx) \nonumber \\ 
&=1+
\int_{E\times E}\pi(dy)\gamma(y,dx)
=
1+1=2<\infty \nonumber \,, 
\end{align}
as desired.
Part (c) follows from \eqref{eq:symm_measure} after noting that
\begin{align*}
[\tilde{\pi}\otimes\Gamma_g](A\times B)
&=
%\int_{A\times B}\tilde{\pi}(dx)\Gamma_g(x,dy)=
Z_{\lambda}^{-1} \int_{A\times B}\pi(dx)
g ( t(x,y)) \gamma(x,dy)
\\
&=
Z_{\lambda}^{-1} \int_{A\times B}\pi(dy)
g ( t(y,x)) \gamma(y,dx)
\\
&=
[\tilde{\pi}\otimes\Gamma_g](B\times A)
\end{align*}
for every $A,B\in \mathcal{E}$.
\end{proof}

Next we consider whether the process can explode in finite time. Recall that a jump process is said to explode if the inter-arrival times $\tau_1, \tau_2, ...$ satisfy $\sum_{k=1}^{+\infty}\tau_k<\infty$, meaning the process reaches the graveyard state $\partial$. Conversely, the process is
called non-explosive if $\sum_{k=1}^{+\infty}\tau_k=\infty$ almost surely.  To ensure non-explosivity of the process, we make the following assumption on the kernel $\Gamma_g$, which is connected with the concept of $\phi-$irreducibility (see e.g.\ \cite{meyn.tweedie:93:2}).
While irreducibility is not in general necessary for non-explosivity, it allows for a simpler treatment and it is a natural requirement in sampling applications. 

\begin{assumption}\label{phi.irr.assumption:00}
    For all $x \in \textup{supp}(\pi)$ and any $B \in \mathcal{E}$ with $\phi(B)>0$, there exists $n \in \mathbb{N}$ such that 
    $$
        \mathbb{P}_x\left( X_n \in B \right) > 0.
    $$
\end{assumption}

\begin{remark}
    For most applications in sampling (such as MCMC),  \Cref{phi.irr.assumption:00} is rather minimal. The assumption will be satisfied, for example, in the case where for all $x \in E$, $\gamma(x,\cdot)$ is supported on the entire state space, with continuous $g$, $t(x,\cdot)$ that satisfy $g(s)>0$, for all $s> 0$, and $t(x,y)>0$ for all $x,y \in E$.
\end{remark}

The next theorem shows that irreducibility of $\Gamma_g$ is sufficient to ensure that $Y$ is non-explosive for $\pi$-almost every starting state.
%We note that in this context irreducibility is a sufficient but not necessary condition to ensure non-explosivity. (I took this out as it's implied by the first sentence)
\begin{theorem}[Non-explosivity, reversibility \& ergodicity]\label{non.explo.theorem:1}
Under Assumptions \ref{lambda.positivity:00}-\ref{phi.irr.assumption:00}, the following hold:
\begin{enumerate}
    \item[(a)] $Y$ is almost surely (a.s.) non-explosive for $\pi$-a.e.\ starting state $x \in E$.
    \item[(b)] $Y$ is $\pi$-reversible, hence $\pi$-invariant.
    \item[(c)] $Y$ is ergodic for $\pi$-a.e.\ starting state $x \in E$, meaning for every $f:E\to \mathbb{R}$, with $f \in L^1(\pi)$ we have a.s.\ that
    \begin{align}\label{eq:erg_Y}
        \lim_{t\to\infty}\frac{1}{t}\int_{0}^tf(Y_s) ds
        =\int_E f(x)\pi(dx).
    \end{align}
\end{enumerate}
\end{theorem}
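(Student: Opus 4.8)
The plan is to reduce all three statements to classical ergodic theory for the embedded discrete-time chain $X$ with kernel $\Gamma_g$, exploiting the reversibility already recorded in \Cref{prop:reversible_jumpkernel} and \Cref{prop:well-posed}. The starting observation is that \Cref{prop:well-posed}(c) makes $\Gamma_g$ reversible, hence invariant, with respect to the probability measure $\tilde\pi = Z_\lambda^{-1}\lambda\,\pi$, while \Cref{phi.irr.assumption:00} makes it $\phi$-irreducible. A $\phi$-irreducible chain admitting an invariant probability measure is positive recurrent, and on a maximal absorbing set $H$ of full $\tilde\pi$-measure it is in fact positive Harris recurrent; on $H$ the discrete-time ergodic theorem holds for every $h\in L^1(\tilde\pi)$. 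Since $\tilde\pi$ and $\pi$ are mutually absolutely continuous (because $\lambda>0$ on $\textup{supp}(\pi)$ by \Cref{lambda.positivity:00} and $\lambda<\infty$ $\pi$-a.e.\ by \Cref{prop:well-posed}(a)), the set $H$ has full $\pi$-measure, which is exactly the source of the ``$\pi$-a.e.\ starting state'' qualifier. The clean way to handle the random holding times is to run the ergodic theorem not on $X$ itself but on the augmented Markov chain $Z_{k}=(X_{k},E_{k+1})$, where $(E_k)$ are i.i.d.\ $\textup{Exp}(1)$ variables independent of $X$ and $\tau_k = E_k/\lambda(X_{k-1})$; this chain is positive Harris recurrent with stationary law $\tilde\pi\otimes\textup{Exp}(1)$.

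For (a), apply the ergodic theorem for $Z$ to the function $(x,e)\mapsto e/\lambda(x)$, which lies in $L^1(\tilde\pi\otimes\textup{Exp}(1))$ with integral $\int \lambda^{-1}\,d\tilde\pi = Z_\lambda^{-1}$. This yields $\frac1n\sum_{k=1}^n\tau_k \to Z_\lambda^{-1}>0$ almost surely for $\pi$-a.e.\ start, whence $\sum_{k\ge1}\tau_k=\infty$ a.s.\ and $Y$ is non-explosive. Part (b) does not involve the embedded chain's recurrence: the identity of \Cref{prop:reversible_jumpkernel}, that $\pi(dx)J(x,dy)$ is a symmetric measure on $E\times E$, is precisely detailed balance for the jump mechanism. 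I would turn this into reversibility of the (now non-explosive, by (a)) process by a direct path-space computation: writing down the joint density of the embedded states and inter-jump times of a trajectory started from $\pi$ on $[0,T]$, one checks that the holding time spent in each state is identical for a trajectory and its time-reversal, and that the factor $\pi(dx_0)\prod_i J(x_{i-1},dx_i)$ is invariant under reversing the index order, by telescoping \Cref{prop:reversible_jumpkernel}. Equality of the finite-dimensional laws of the forward and reversed stationary processes is exactly $\pi$-reversibility, and $\pi$-invariance is an immediate consequence.

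For (c), I would evaluate the time-average first along the jump times, using the identity $\int_0^{T_n} f(Y_s)\,ds = \sum_{k=1}^n \tau_k f(X_{k-1})$, so that $T_n^{-1}\int_0^{T_n} f(Y_s)\,ds$ is the ratio of $\frac1n\sum_{k=1}^n \tau_k f(X_{k-1})$ to $\frac1n\sum_{k=1}^n \tau_k$. Applying the ergodic theorem for $Z$ to $(x,e)\mapsto e\,f(x)/\lambda(x)$, which is in $L^1(\tilde\pi\otimes\textup{Exp}(1))$ precisely because $f\in L^1(\pi)$ and $\int |f|\lambda^{-1}\,d\tilde\pi = Z_\lambda^{-1}\int |f|\,d\pi<\infty$, gives numerator $\to Z_\lambda^{-1}\int f\,d\pi$, while the denominator $\to Z_\lambda^{-1}$ from (a); the ratio converges to $\int f\,d\pi$. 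The crucial point here is that this augmented-chain device lets one get away with only the $L^1(\pi)$ hypothesis, sidestepping the second-moment control on the holding times that a direct strong law would demand. Finally I would pass from the subsequence $T_n$ to continuous time by a sandwiching argument: since $\frac1n\sum_{k=1}^n\tau_k$ converges we get $\tau_n/n\to0$ and hence $T_{n+1}/T_n\to1$, so for $t\in[T_n,T_{n+1})$ the average $\frac1t\int_0^t f(Y_s)\,ds$ is squeezed (after splitting $f=f^+-f^-$) between quantities both tending to $\int f\,d\pi$.

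The main obstacle is not any single computation but the correct invocation of the discrete-time theory: verifying that $\phi$-irreducibility together with the invariant probability $\tilde\pi$ upgrades to positive Harris recurrence on a full-measure absorbing set, so that the $L^1$ ergodic theorem is available, and tracking that this absorbing set is $\pi$-full so that the conclusions genuinely hold for $\pi$-a.e.\ starting point. The remaining care is bookkeeping: ensuring $Z_\lambda\in(0,\infty)$ (from \Cref{prop:well-posed}(b) and \Cref{lambda.positivity:00}), the mutual absolute continuity of $\pi$ and $\tilde\pi$, and the measurability of the path-space reversal in (b).
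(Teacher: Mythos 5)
Your proposal is correct, and for parts (a) and (c) it is essentially the paper's argument: the paper likewise reduces everything to a law of large numbers for the embedded dynamics, invoking Theorem 1 of \cite{asmussen2011new} — whose content is exactly your ``positive Harris recurrence on a maximal absorbing set of full $\tilde{\pi}$-measure'' reduction — and likewise uses an augmented chain (there $((X_{n-1},\tau_n))$ with invariant law $\hat{\pi}(dx,d\tau)=Z_\lambda^{-1}\lambda^2(x)e^{-\lambda(x)\tau}\,d\tau\,\pi(dx)$, a trivial reparametrisation of your $(X_k,E_{k+1})$ with stationary law $\tilde{\pi}\otimes\textup{Exp}(1)$) followed by the same $T_{n(t)}\leq t\leq T_{n(t)+1}$ sandwich and the same observation that full $\tilde{\pi}$-measure sets are full $\pi$-measure. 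Two genuine, if modest, divergences are worth noting. First, for (a) the paper routes through Theorem 2.3.2 of \cite{norris:97} to convert explosion into the event $\sum_k\lambda(X_k)^{-1}<\infty$ and then applies the LLN to $1/\lambda$ on $X$ alone; your version applies the LLN on the augmented chain directly to $(x,e)\mapsto e/\lambda(x)$ and concludes $T_n\to\infty$ outright, which is slightly more economical (indeed the paper re-derives exactly this limit as \eqref{eq:erg_Y:2} inside part (c) anyway). Second, for (b) the paper is citation-based: non-explosivity makes the unique solution of the Kolmogorov backward equation stochastic (Section X.3 of \cite{feller1991introduction}), and then \Cref{prop:reversible_jumpkernel} yields reversibility via Proposition 2 of \cite{serfozo2005reversible}; your direct path-space computation — holding times invariant under reversal, and $\pi(dx_0)\prod_i J(x_{i-1},dx_i)$ symmetric under index reversal by telescoping the symmetric measure $\pi(dx)J(x,dy)$ — is a valid, self-contained re-proof of that cited result, at the cost of some measure-theoretic care on general Polish spaces (no densities exist; the telescoping must be done at the level of measures on $\bigcup_n E^{n+1}\times\Delta_n$, and you correctly flag that a.s.\ finiteness of the number of jumps on $[0,T]$ under $\mathbb{P}_\pi$ is needed, which your part (a) supplies). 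Your bookkeeping items ($Z_\lambda\in(0,\infty)$ via \Cref{lambda.positivity:00} and \Cref{prop:well-posed}(b), mutual absolute continuity of $\pi$ and $\tilde{\pi}$, and $n(t)\to\infty$ from $\lambda>0$) match the paper's, so there is no gap.
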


\begin{proof}[Proof of  \Cref{non.explo.theorem:1}]
For (a), by definition $(Y_t)_{t\geq 0}$ explodes if and only if $\sup\{n\,:\,T_n \leq t\}=\infty$ for some $t<\infty$, i.e.\ if $\sum_{k=1}^\infty \tau_k<\infty$. 
Conditionally on $X$, by Theorem 2.3.2 of \cite{norris:97}, a.s. $\sum_{k=1}^\infty \tau_k<\infty$ if and only if $\sum_{k=1}^{+\infty}\lambda(X_k)^{-1}<\infty$, thus unconditionally
   \begin{equation*}
       \mathbb{P}_x\left(\sum_{k=1}^\infty \tau_k<\infty\right)
       =
       \mathbb{P}_x\left(\sum_{k=1}^{+\infty}\frac{1}{\lambda(X_k)}<\infty\right).
   \end{equation*}
By \Cref{prop:well-posed}(c) $\Gamma_g$ is $\tilde{\pi}$-invariant. Since $\Gamma_g$ is also $\phi$-irreducible for some non-trivial $\phi$, by Theorem 1 of \cite{asmussen2011new}, for $\tilde{\pi}$-almost any $x \in E$ and any $\tilde{\pi}$-integrable $f:E \to [0,\infty)$
\begin{equation*}
    \frac{1}{N}\sum_{k=1}^{N}f(X_k)\xrightarrow{N \rightarrow \infty}\int f(x)\tilde{\pi}(dx)
\end{equation*} 
almost surely. Setting $f(x) := 1/\lambda(x)$ gives   
\begin{equation*}
    \frac{1}{N}\sum_{k=1}^{N}\frac{1}{\lambda(X_k)}\xrightarrow{N \rightarrow \infty}\int\frac{1}{\lambda(x)}\frac{1}{Z_{\lambda}}\lambda(x)\pi(dx)=\frac{1}{Z_{\lambda}}>0
\end{equation*} 
almost surely, implying that $\mathbb{P}_x(\sum_{k=1}^{+\infty}\lambda(X_k)^{-1}<+\infty)=0$ for $\tilde{\pi}$-almost any $x$. 

For (b) it is classical that if $Y$ is non-explosive then the unique solution to the Kolmogorov backward equation is stochastic, see e.g.\ Section X.3 of \cite{feller1991introduction}. Combining with \Cref{prop:reversible_jumpkernel} implies the result, a proof of which can be found as e.g.\ Proposition 2 of \cite{serfozo2005reversible}.

% Under (b), then for almost all events $\omega$ there exists a compact set $K=K(\omega)$ and such that $X_k \in K$ for infinitely many $k$. Since $\lambda$ is bounded on $K$, there exists an $M>0$ such  that 
% \begin{equation*}
%     \frac{1}{\lambda(X_k)} \geq \frac{1}{M}
% \end{equation*}
% for any $k$ such that $X_k \in K$. Therefore $\sum_{k=1}^{+\infty}\lambda(X_k)^{-1}=+\infty$.

Consider now part (c). 
Without loss of generality, we will assume that $f(x) \geq 0$ for all $x \in E$, and the result will follow by considering the positive and negative part of a general $f \in L^1(\pi)$. By \Cref{def:LBMJP}, we have
    \begin{align*}
        \frac{1}{t}\int_{0}^tf(Y_s) ds
        =
        \frac{1}{t}
        \sum_{i=1}^{n(t)}\tau_i f(X_{i-1}) + \frac{1}{t}
        (t-T_{n(t)})f(X_{n(t)}),    
       % =
        %\lim_{n\to\infty}\frac{\sum_{i=1}^n\tau_i f(X_{i-1})}{\sum_{i=1}^n\tau_i}
    \end{align*}
 where $n(t):=\sup\{n\,:\,T_n \leq t\}$. Using the fact that a.s.\ $t-T_{n(t)} \leq \tau_{n(t)+1}$, and that $T_{n(t)} \leq t \leq T_{n(t)+1}$, we write

 \begin{equation}\label{eq:erg_Y:0}
    \frac{1}{T_{n(t)+1}}
        \sum_{i=1}^{n(t)}\tau_i f(X_{i-1}) \leq \frac{1}{t}\int_{0}^tf(Y_s) ds \leq    \frac{1}{T_{n(t)}} \sum_{i=1}^{n(t)+1}\tau_i f(X_{i-1}).
 \end{equation}
 
By the proof of Theorem 1 of \cite{asmussen2011new}, $(X_n)_{n\in\mathbb{N}}$ is $\tilde{\pi}$-irreducible, i.e. one can choose $\tilde{\pi}$ to be the non-trivial measure $\phi$ in the definition of $\phi$-irreducibility. Then, $((X_{n-1},\tau_n))_{n\in\mathbb{N}}$ is a Markov chain that is also $\phi$-irreducible with measure $\phi(dx, d\tau) := \tilde{\pi}(dx) d\tau$ on $E \times [0,+\infty)$, with $d\tau$ denoting the Lebesgue measure. It also has an invariant distribution $\hat{\pi}$ defined on $E \times [0,+\infty)$, with $X$-marginal equal to $\tilde{\pi}$ and conditional distribution of $\tau|X$ equal to $\textup{Exp}\left( \lambda(X) \right)$, meaning
\begin{equation*}
\hat{\pi}(dx,d\tau) = \frac{1}{Z_{\lambda}}  \lambda^2(x) \exp\left\{ -\lambda(x) \tau \right\} d \tau \ \pi(dx).  
\end{equation*}
By Theorem 1 of \cite{asmussen2011new}, using the Law of Large Numbers for the functions $g(x,s)=f(x) s$, and $h(x,s)=s$ we deduce that for $\tilde{\pi}$-a.e. starting points $x \in E$, a.s.
\begin{align}\label{eq:erg_Y:1}
   \lim_{n \rightarrow \infty} \frac{1}{n}\sum_{i=1}^{n}\tau_i f(X_{i-1}) 
   = 
   \mathbb{E}_{\hat{\pi}}[g]
   &= \int_E \int_0^{+\infty} f(x) s \frac{1}{Z_{\lambda}} \lambda^2(x) \exp\left\{ -\lambda(x) s \right\} ds \ \pi(dx) \nonumber
   \\
   &= \frac{1}{Z_{\lambda}} \int_Ef(x)\pi(dx), 
\end{align}
and
\begin{equation}\label{eq:erg_Y:2}
    \lim_{n \rightarrow \infty} \frac{1}{n}T_n = \lim_{n \rightarrow \infty} \frac{1}{n}\sum_{i=1}^{n}\tau_i = \mathbb{E}_{\hat{\pi}}\left[ h \right]= \int_E \int_0^{+\infty} s \frac{1}{Z_{\lambda}} \lambda^2(x) \exp\{ -\lambda(x)s \}ds \pi(dx) = \frac{1}{Z_{\lambda}}.
\end{equation}

Finally, observe that since $\lambda(x)>0$ for all $x \in E$, we have a.s. $n(t) \xrightarrow{t \rightarrow + \infty} +\infty$. Combining \eqref{eq:erg_Y:1} and \eqref{eq:erg_Y:2}, we see that on letting $t \rightarrow +\infty$ on the left-hand side of \eqref{eq:erg_Y:0} we get
\begin{align*}
    \limsup_{t \rightarrow +\infty} \frac{1}{T_{n(t)+1}}
    \sum_{i=1}^{n(t)}\tau_i f(X_{i-1}) 
    &= 
    \limsup_{t \rightarrow + \infty} \frac{n(t)}{n(t)+1}\frac{1}{\left({n(t)+1}\right)^{-1}T_{n(t)+1}} n(t)^{-1} \sum_{i=1}^{n(t)}\tau_i f(X_{i-1}) 
    \\
    &= \int_Ef(x)\pi(dx)
\end{align*}
and a similar argument shows that for the right-hand side of  \eqref{eq:erg_Y:0} we get
\begin{equation*}
    \liminf_{t \rightarrow +\infty} \frac{1}{T_{n(t)}} \sum_{i=1}^{n(t)+1}\tau_i f(X_{i-1})= \int_Ef(x)\pi(dx).
\end{equation*}
The result follows on noting that since $0<\lambda(x)< \infty$ for $\pi$-a.e.\ $x$, any set of full $\tilde{\pi}$-measure is also of full $\pi$-measure, therefore \eqref{eq:erg_Y} holds for $\pi$-a.e.\ starting point.
\end{proof}

Next, we consider the average number of jumps of the LBMJP in a finite time horizon. The assumption that this is finite is required for many theoretical results of interest.  With a view towards using the theory developed in \cite{davis:84} for piecewise-deterministic Markov processes, we present the following.

\begin{lemma}\label{lem:finite.expected.jumps:0}
Let $n_T$ be the number of jumps of a LBMJP until time $T\in(0,\infty)$.  Under Assumptions \ref{lambda.positivity:00}-\ref{phi.irr.assumption:00}, there exists $ S \in \mathcal{E}$ with $\pi(S)=1$ such that for all $T>0$ and $x \in S$,
    \begin{equation}\label{finite.expected.jumps:1}
        \mathbb{E}_x\left[ n_T \right] < \infty.
    \end{equation}
\end{lemma}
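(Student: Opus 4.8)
The goal is to show that the expected number of jumps $n_T$ in a finite time horizon is finite for $\pi$-almost every starting state. The natural strategy is to control $\mathbb{E}_x[n_T]$ by relating the number of jumps up to time $T$ to the jump times $T_n = \sum_{k=1}^n \tau_k$, where $\tau_k \sim \textup{Exp}(\lambda(X_{k-1}))$ conditionally on the chain $X$. Since $n_T = \sup\{n : T_n \le T\}$, bounding $\mathbb{E}_x[n_T]$ amounts to showing the jump times do not accumulate too quickly, i.e.\ that the process makes jumps at a controlled rate. The key quantity is the holding-rate $\lambda(X_k)$, and the difficulty is that $\lambda$ is unbounded in general, so large values of $\lambda$ correspond to rapid jumps.

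\textbf{Main approach.} The plan is to use a standard tail bound: for each integer $n$, $\{n_T \ge n\} = \{T_n \le T\}$, so $\mathbb{E}_x[n_T] = \sum_{n=1}^\infty \mathbb{P}_x(T_n \le T)$. I would bound $\mathbb{P}_x(T_n \le T)$ via a Markov/Chernoff argument on $T_n$. Conditionally on the discrete chain $X$, the $\tau_k$ are independent exponentials, so for any $\theta > 0$ one has $\mathbb{E}_x[e^{-\theta T_n} \mid X] = \prod_{k=1}^n \frac{\lambda(X_{k-1})}{\lambda(X_{k-1}) + \theta}$, and hence $\mathbb{P}_x(T_n \le T) \le e^{\theta T}\,\mathbb{E}_x\!\big[\prod_{k=1}^n \tfrac{\lambda(X_{k-1})}{\lambda(X_{k-1}) + \theta}\big]$. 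The task reduces to showing this product decays geometrically in $n$ on average. Because each factor is at most $1$, and is bounded away from $1$ whenever $\lambda(X_{k-1})$ is not too large, I expect to exploit $\tilde\pi$-reversibility of $\Gamma_g$ (Proposition~\ref{prop:well-posed}(c)) together with the ergodic averaging from Theorem~\ref{non.explo.theorem:1} to show that the chain spends a positive fraction of time in a region where $\lambda$ is bounded, so that the product of factors contracts.

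\textbf{Identifying the good set.} Concretely, since $\lambda$ is $\pi$-a.e.\ finite by Proposition~\ref{prop:well-posed}(a), for any $\epsilon>0$ there is a threshold $M$ with $\tilde\pi(\{\lambda \le M\}) \ge 1 - \epsilon$. On the event that $X_{k-1}$ lies in $\{\lambda \le M\}$, the factor $\frac{\lambda(X_{k-1})}{\lambda(X_{k-1})+\theta} \le \frac{M}{M+\theta} < 1$. By the law of large numbers for the reversible chain $(X_k)$ (as established in the proof of Theorem~\ref{non.explo.theorem:1}), the empirical frequency of visits to $\{\lambda \le M\}$ converges to $\tilde\pi(\{\lambda \le M\}) > 0$ for $\tilde\pi$-a.e.\ starting state, so along almost every path the product $\prod_{k=1}^n \frac{\lambda(X_{k-1})}{\lambda(X_{k-1})+\theta}$ is dominated by $(\frac{M}{M+\theta})^{cn}$ for large $n$ and a positive constant $c$. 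This geometric decay, combined with the factor $e^{\theta T}$, yields summability of $\sum_n \mathbb{P}_x(T_n \le T)$ and hence $\mathbb{E}_x[n_T] < \infty$, on a set $S$ of full $\pi$-measure (using that full $\tilde\pi$-measure implies full $\pi$-measure, as noted at the end of Theorem~\ref{non.explo.theorem:1}).

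\textbf{The main obstacle.} The delicate point is passing from the almost-sure geometric decay of the path-wise product to a bound on the \emph{expectation} $\mathbb{E}_x[n_T]$ that is genuinely finite, rather than merely establishing $n_T < \infty$ almost surely (which would already follow from non-explosivity). One cannot naively interchange the limit and the expectation, since the LLN gives only an a.s.\ statement with a random threshold for when the decay kicks in. The cleanest route is probably to avoid the pathwise LLN and instead bound $\mathbb{E}_x\big[\prod_{k=1}^n \tfrac{\lambda(X_{k-1})}{\lambda(X_{k-1})+\theta}\big]$ directly through an operator/spectral argument: writing this expectation in terms of iterates of the sub-Markovian operator $f \mapsto \frac{\lambda}{\lambda+\theta}\,\Gamma_g f$ acting on $L^2(\tilde\pi)$, whose operator norm is strictly less than one because the multiplier $\frac{\lambda}{\lambda+\theta}$ is strictly below one on a set of positive $\tilde\pi$-measure. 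I expect this $L^2$-contraction estimate, which sidesteps the need for pathwise frequencies and delivers a uniform geometric rate, to be the crux of making the expectation bound rigorous.
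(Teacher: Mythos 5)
There is a genuine gap, and it sits exactly at the step you yourself flag as the crux. Your reduction $\mathbb{E}_x[n_T]=\sum_{n\geq 1}\mathbb{P}_x(T_n\leq T)$ and the conditional Chernoff bound $\mathbb{P}_x(T_n\leq T)\leq e^{\theta T}\,\mathbb{E}_x\bigl[\prod_{k=1}^n \tfrac{\lambda(X_{k-1})}{\lambda(X_{k-1})+\theta}\bigr]$ are fine, but the claimed fix --- that the sub-Markovian operator $f\mapsto \tfrac{\lambda}{\lambda+\theta}\Gamma_g f$ has $L^2(\tilde\pi)$-norm strictly below one because the multiplier is strictly below one on a set of positive $\tilde\pi$-measure --- is false in general under Assumptions \ref{lambda.positivity:00}--\ref{phi.irr.assumption:00}. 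The operator $\Gamma_g$ has norm exactly $1$, and when $\lambda$ is unbounded (which the assumptions permit, e.g.\ unbounded $g$) the multiplier $\lambda/(\lambda+\theta)$ has essential supremum $1$, so the product of norms gives no contraction; and $\phi$-irreducibility alone gives no spectral gap to compensate. A sub-Markov operator obtained by multiplying a norm-one Markov operator by a factor $<1$ a.e.\ can have spectral radius $1$ with only polynomial decay of $\mathbb{E}_x[\prod_{k=1}^n m(X_{k-1})]$ (compare a recurrent random walk killed on a fixed set, where survival probabilities decay like $n^{-1/2}$, which is not even summable). So with $\theta$ fixed your chain of bounds need not be summable in $n$, and the uniform geometric rate you want cannot be extracted from the stated hypotheses. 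Indeed, had it worked, your argument would have delivered a uniform-in-$x$ geometric bound --- far stronger than the lemma's $\pi$-a.e.\ claim --- which is a warning sign that such weak assumptions cannot support it.

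The paper avoids all pointwise control by exploiting stationarity: since the jumps form a counting process with compensator $\int_0^t\lambda(Y_s)\,ds$, one has $\mathbb{E}_x[n_T]=\mathbb{E}_x\bigl[\int_0^T\lambda(Y_t)\,dt\bigr]$, and integrating the starting point over the invariant measure $\pi$ (invariance is available from \Cref{non.explo.theorem:1}(b)) gives $\mathbb{E}_\pi[n_T]=T\int_E\lambda\,d\pi=TZ_\lambda<\infty$ by \Cref{prop:well-posed}(b). Tonelli then forces $\mathbb{E}_x[n_T]<\infty$ for $\pi$-a.e.\ $x$, and since $T\mapsto n_T$ is monotone one obtains a single full-measure set $S$ valid for all $T>0$ by intersecting over integer $T$. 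In short: the a.e.\ quantifier in the statement is the whole point, and averaging against $\pi$ is the mechanism that exploits it; your proposal tries to prove a pointwise geometric estimate that the assumptions do not provide.
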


\begin{proof}[Proof of \Cref{lem:finite.expected.jumps:0}]
    The jumps follow a non-homogeneous Poisson process with rate $\lambda(Y_t)$. Standard results (see e.g.\ \cite{bierkens.roberts.zitt:19}) show that
    \begin{equation*}
        \mathbb{E}_x\left[ n_T \right] = \mathbb{E}_x\left[ \int_0^T \lambda(Y_t)dt \right].
    \end{equation*}
    Assume that the starting point $x$ of the process is distributed according to $\pi$, which is invariant for the process $Y$ from \Cref{non.explo.theorem:1}(b). Then
    \begin{equation*}
        \mathbb{E}_{\pi}\left[ n_T \right] = \mathbb{E}_\pi \left[ \int_0^T \lambda(Y_t)dt  \right] = \int_0^T \mathbb{E}_{\pi}\left[ \lambda(Y_t) \right] dt = \int_0^T \int_E \lambda(x) \pi(dx) dt = T \int_E \lambda(x) \pi(dx) < \infty
    \end{equation*}
since $\int_E \lambda(x) \pi(dx) < \infty$ from \Cref{prop:well-posed}. The result follows.
\end{proof}

To conclude with the well-posedness of the LBMJP, we address the concern that $Y$ might hit a state $x$ with $\lambda(x)=+\infty$ when starting from some zero measure set. To avoid this, we make the following assumption.

\begin{assumption}\label{ass.finite.lambda}
    For all $x \in \textup{supp}(\pi)$, and any $A \in \mathcal{E}$ with $\pi(A)=0$, we have $\Gamma_g(x,A)=0$. 
\end{assumption}

\begin{remark}
    Under Assumption \ref{lambda.positivity:00}, we have that $\pi(A)=0 \iff \tilde{\pi}(A)=0$. Therefore, Assumption \ref{ass.finite.lambda} asks that for any $x \in \textup{supp}(\tilde{\pi})$, $\Gamma_g(x,\cdot)$ is absolutely continuous with respect to $\tilde{\pi}$. Since $\tilde{\pi}$ is also the invariant measure of the kernel $\Gamma_g$, this will certainly hold for $\tilde{\pi}$-almost all states $x$, and therefore for $\pi$-almost all states $x$. Therefore, Assumption \ref{lambda.positivity:00} merely imposes an extra condition on a zero $\pi$-measure set.
\end{remark}

Under \Cref{ass.finite.lambda}, we have the following strengthening of Theorem \ref{non.explo.theorem:1} parts (a) and (c) from holding for $\pi$-a.e. starting state to every starting state $x$ in the support of $\pi$, along with some additional stability of the process.

\begin{theorem}[Stability and ergodicity from any starting state]\label{stability.theorem.final:00}
    Under Assumptions  \ref{lambda.positivity:00}-\ref{ass.finite.lambda}, from any starting state $x \in \textup{supp}(\pi)$ the following hold:
    \begin{enumerate}
        \item[(a)] The process $Y_t$ is a.s.\ non-explosive. %from any starting state $x \in \textup{supp}(\pi)$.
    \item[(b)] $Y_t \in \textup{supp}(\pi)$ and $\lambda(Y_t) < \infty$. %from any starting state $x \in \textup{supp}(\pi)$ and all $ t \geq 0$. 
    \item[(c)] For all $f \in L^1(\pi)$, a.s.\
    \begin{equation}\label{eq:erg_Y.again}
      \lim_{t \rightarrow \infty}\frac{1}{t} \int_0^t f(Y_s)ds = \int_E f(x)\pi(dx).  
    \end{equation}
\end{enumerate}
\end{theorem}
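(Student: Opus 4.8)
The plan is to reduce parts (a) and (c) to \Cref{non.explo.theorem:1}, which already establishes them for a $\pi$-full set of starting states, by using \Cref{ass.finite.lambda} to guarantee that the embedded chain enters this good set after a single step; part (b) I would then read off directly. First I would gather the exceptional null sets into one: let $S\subseteq\textup{supp}(\pi)$ be the set of states $x$ with $0<\lambda(x)<\infty$ for which the non-explosivity and ergodic-average conclusions of \Cref{non.explo.theorem:1}(a),(c) hold when the process is started from $x$. By \Cref{prop:well-posed}(a) and \Cref{non.explo.theorem:1} we have $\pi(S)=1$, so that $N:=\textup{supp}(\pi)\setminus S$ satisfies $\pi(N)=0$.

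The crucial step is that \Cref{ass.finite.lambda} forces the embedded chain to avoid $N$: for every $z\in\textup{supp}(\pi)$ the kernel $\Gamma_g(z,\cdot)$ is absolutely continuous with respect to $\pi$, so $\Gamma_g(z,N)=0$ and $\Gamma_g(z,\textup{supp}(\pi))=1$, whence $\Gamma_g(z,S)=1$. Starting the chain from an arbitrary $x\in\textup{supp}(\pi)$ this gives $X_1\in S$ almost surely, and since $S\subseteq\textup{supp}(\pi)$ the same bound applied inductively yields $X_n\in S$ for all $n\geq1$ almost surely. Thus, whatever the (possibly bad) initial state, the entire trajectory after the initial state lives in the good set $S$.

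Part (b) would then follow by inspecting the states visited by $Y$, namely $x$ together with $(X_n)_{n\geq1}$, all of which lie in $\textup{supp}(\pi)$, giving $Y_t\in\textup{supp}(\pi)$; for $\lambda(Y_t)<\infty$ I would split on $\lambda(x)$: if $\lambda(x)<\infty$ then $x\in S$ and every visited state has finite rate, while if $\lambda(x)=\infty$ then $\tau_1\sim\textup{Exp}(+\infty)$ is zero, so $x$ is never held for positive time and $Y_t=X_n$ with $n\geq1$ for every $t\geq0$. For parts (a) and (c) I would exploit the Markov structure: conditionally on $X_1$, the process $\tilde Y_s:=Y_{\tau_1+s}$ is an LBMJP started from $X_1\in S$, to which \Cref{non.explo.theorem:1}(a),(c) apply. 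Non-explosivity of $Y$ follows since $\sum_{k\geq1}\tau_k=\tau_1+\sum_{k\geq2}\tau_k$, where the tail is the (infinite, almost surely) explosion sum of $\tilde Y$ and $\tau_1<\infty$ almost surely. For the ergodic average, the contribution of the first interval is $\tau_1 f(x)/t\to0$, while the remaining normalised average over $[\tau_1,t]$ equals $\frac{t-\tau_1}{t}$ times the ergodic average of $\tilde Y$ over $[0,t-\tau_1]$, which tends to $\int_E f\,d\pi$ by \Cref{non.explo.theorem:1}(c); reducing to $f\geq0$ via positive and negative parts as in the proof of that theorem then yields \eqref{eq:erg_Y.again}.

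The main obstacle I anticipate is the honest bookkeeping around a starting state with $\lambda(x)=+\infty$, where the degenerate holding time produces an instantaneous first jump: one must verify that this jump lands in $S$ and that $Y$ remains well defined, so that part (b) and \eqref{eq:erg_Y.again} hold verbatim rather than merely up to a null event at the start. Beyond this point the argument is a clean combination of \Cref{ass.finite.lambda} (escaping the null set $N$ after one step) with the shift/Markov property reducing the dynamics to the already-established \Cref{non.explo.theorem:1}.
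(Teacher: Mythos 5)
Your proof is correct, and for parts (a) and (b) it is essentially the paper's argument: Assumption \ref{ass.finite.lambda} confines $X_1,X_2,\dots$ to the full-$\pi$-measure set $\Lambda_{<\infty}\cap\textup{supp}(\pi)$; indeed you are \emph{more} careful than the paper about the degenerate case $\lambda(x)=+\infty$, where $\tau_1=0$ and $Y$ never occupies $x$ for positive time. For part (c), however, you take a genuinely different route. The paper re-runs the proof of \Cref{non.explo.theorem:1}(c), replacing Theorem 1 of \cite{asmussen2011new} (law of large numbers for $\tilde{\pi}$-a.e.\ initial state) with Theorem 2 of that paper, which upgrades the LLN to \emph{every} initial state exactly when the one-step kernel is absolutely continuous with respect to the invariant measure --- precisely what \Cref{ass.finite.lambda} supplies. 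You instead condition on the first jump and invoke the Markov property: $X_1$ lands a.s.\ in the good set, $\tilde{Y}_s:=Y_{\tau_1+s}$ is an LBMJP started from $X_1$, and the initial holding interval contributes $O(1/t)$ to the time average. This buys self-containedness (no appeal to the stronger external theorem), at the cost of two pieces of bookkeeping you should make explicit: (i) fix $f\in L^1(\pi)$ \emph{before} choosing the good set $S=S_f$, since the proof of \Cref{non.explo.theorem:1}(c) via Theorem 1 of \cite{asmussen2011new} a priori allows the exceptional null set of starting points to depend on $f$ --- harmless here because \eqref{eq:erg_Y.again} also quantifies over $f$ before the a.s.\ statement; and (ii) $S_f=\{y\in E:\mathbb{P}_y(\text{no explosion and the LLN for }f\text{ holds})=1\}$ is measurable, because the event is a measurable functional on path space and $y\mapsto\mathbb{P}_y(\cdot)$ is a kernel, so the assertion $\Gamma_g(x,S_f)=1$ is meaningful. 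One small slip: in the case $\lambda(x)<\infty$ you write ``then $x\in S$'', which does not follow (finiteness of the rate does not place $x$ in the good set); fortunately you never need it, since part (b) only requires $\lambda(x)<\infty$ together with $X_n\in\Lambda_{<\infty}$ for $n\geq 1$, and parts (a) and (c) only use $X_1\in S$.
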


\begin{proof}[Proof of \Cref{stability.theorem.final:00}]
Let $\Lambda_{<\infty} = \{ y \in E : \lambda(y) < \infty \}$. By \Cref{prop:well-posed}(a), $\pi(\Lambda_{<\infty} \cap \textup{supp}(\pi)) = 1$. Therefore, by Assumption \ref{ass.finite.lambda}, and since we start from $x \in \textup{supp}(\pi)$, almost surely the process $X_1, X_2, \dots$ will stay on $\Lambda_{<\infty} \cap \textup{supp}(\pi)$ for all $n \in \mathbb{N}$, and, therefore, the same will hold for the process $Y_t$ for all $t \geq 0$.
Using the same argument as in \Cref{non.explo.theorem:1}(b), we get that the process is $\pi$-reversible and invariant. Finally, \eqref{eq:erg_Y.again} follows by the same arguments of the proof of \Cref{non.explo.theorem:1}(c) by appealing to Theorem 2 of \cite{asmussen2011new}.
\end{proof}

\begin{remark}
    As $\lambda \in L^1(\pi)$ (by  \Cref{prop:well-posed}), and $\lambda(Y_t)<\infty$ a.s.\ for all $t \geq 0$ (by \Cref{stability.theorem.final:00}), we can always consider a version of $\lambda \in L^1(\pi)$ such that $\lambda(x)<\infty$ for all $x \in E$, and this will not affect the behaviour of the process. %From this point forward we therefore assume without loss of generality that $\lambda(x)< \infty$ for all $x \in E$.
\end{remark}

%As a result of \Cref{stability.theorem.final:00}, from this point forward we will assume that the process is non-explosive, and that the starting point $x$ lies in the support of $\pi$.

\subsection{Feller property}\label{sec::Feller}

For a continuous time Markov process with transition kernel $P_t(x,\cdot)$ at time $t$, let the corresponding semigroup $(P_t)_{t\geq 0}$ be defined through the relation $P_tf(x) := \int f(y)P_t(x,dy)$.  We will refer to a Markov process for which $P_tf(x) \to f(x)$ as $t \to 0$ point-wise for all $x \in E$ as \emph{strong Feller} if when $f:E\to \mathbb{R}$ is bounded then $P_tf \in C_b(E)$. If $f \in C_b(E) \implies P_t f \in C_b(E)$ then we will call the process weak Feller. 
In general a Markov jump process will not be strong Feller, since at time $t$ if there is some non-zero probability that no jumps have occurred then $P_t f$ is a convex combination of functions including $f$, which may not be continuous.  %We show below that when $g$ is bounded then a LBMJP is Feller--Dynkin.  In fact when it is unbounded this need not be the case, which relates to the fact that in the case of unbounded $g$ the process can move a large distance in a short space of time.  We explore this phenomenon further in  \Cref{subsec:unif_erg}, showing examples in which the process is uniformly ergodic even when $E$ is unbounded.  
%Below we provide simple sufficient conditions under which a LBMJP is weak Feller. %, which allow for unbounded $g$. 

%\begin{proposition}[Feller--Dynkin]\label{bounded.rate.feller:1}
%   Let $\left( Y_t \right)_{t \geq 0}$ be a LBMJP. If $g$ is bounded above then the process is Feller--Dynkin.
%\end{proposition}
%\begin{proof}[Proof of \Cref{bounded.rate.feller:1}]
%    Set $\bar{\lambda}:=\sup_t g(t)$, note that $\lambda(x) \leq \bar{\lambda}$, and let $f \in C_0(\mathbb{R}^d)$. If the process starts from $x \in E$, let $T_1 \sim \exp(\lambda(x))$ be the first jump time of the process. We have that for any
%$t \geq 0$ and $x \in \mathbb{R}^d$
%\begin{align*}
%\left| P_tf(x)-f(x) \right| 
%&= 
%\left| \mathbb{E}_x\left[ f(Y_t) -f(x) \right] \right| \leq \mathbb{E}_x\left[ \left| f(Y_t) -f(x) \right| \right]  
%\\
%&=\mathbb{P}\left( T_1\leq t \right)  \mathbb{E}_x\left[ \left| f(Y_t) -f(x) \right| | T_1 \leq t \right] + \mathbb{P}\left( T_1 > t \right)  \mathbb{E}_x\left[ \left| f(Y_t) -f(x) \right| | T_1 > t \right] 
%\\
%&= 
%\mathbb{P}\left( T_1\leq t \right)  \mathbb{E}_x\left[ \left| f(Y_t) -f(x) \right| | T_1 \leq t \right] \leq \left( 1- \exp\{ \bar{\lambda} t \} \right) 2 \| f \|_{\infty},
%\end{align*}
%    meaning that
%    \begin{equation*}
%        \| P_tf-f \|_{\infty} \leq  \left( 1- \exp\{ \bar{\lambda} t \} \right) 2 \| f \|_{\infty} \xrightarrow{t \rightarrow 0} 0,
%    \end{equation*}
%    which implies that $P_t f \in C_0(E)$ as required.
%\end{proof}

To establish the weak Feller property in some generality, we make the following assumptions.

\begin{assumption}\label{regularity.ass:1}
The following hold.
\begin{enumerate}
   \item Let $(U,\mathcal{B}(U),\nu)$ be a probability space, and $h:E \times U \rightarrow E$ such that for any $x \in E$, $\gamma(x,\cdot)=\mathbb{P}_{u \sim \nu}\left(  h(x,u) \in \cdot \right)$. Assume that for all $u \in U$ the function $x \rightarrow h(x,u)$ is continuous.

%\begin{assumption}\label{pi.zero.order.assumptions:1}
\item The Radon--Nikodym derivative $t:E \times E \rightarrow \mathbb{R}_{\geq 0}$ as in (\ref{eq:radon_nikodym}) is continuous. 
%\end{assumption}

%\begin{assumption}\label{continious.g.assumption:1}
%  \item   The balancing function $g$ is continuous.
  \end{enumerate}
\end{assumption}

Assumption \ref{regularity.ass:1}  
will be satisfied in many applied settings. Assumption \ref{regularity.ass:1}.1, for example, will be verified for a kernel $\gamma$ that induces a random walk. In particular, using the notation of Assumption \ref{regularity.ass:1}.1, we then have $U=E$, $h(x,u)=x+u$, so $h(\cdot,u)$ is continuous.  Assumption \ref{regularity.ass:1}.2 will be satisfied if for example $\pi$ and $\gamma$ admit continuous densities with respect to a common reference measure.

\begin{proposition}[Weak Feller]\label{Feller.proposition:1}
Under Assumptions \ref{ass.finite.lambda} and \ref{regularity.ass:1}, a locally-balanced Markov jump process is weak Feller.
\end{proposition}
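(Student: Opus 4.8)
The plan is to treat boundedness and continuity of $P_t f$ separately for $f \in C_b(E)$. Boundedness is immediate: by \Cref{stability.theorem.final:00} the process is non-explosive from every $x \in \textup{supp}(\pi)$, so $P_t$ is a genuine (conservative) Markov kernel and $\|P_t f\|_\infty \le \|f\|_\infty$. The whole difficulty is therefore to show that $x \mapsto P_t f(x) = \mathbb{E}_x[f(Y_t)]$ is continuous. Since a composition of weak-Feller kernels is weak Feller and $P_t = (P_{t/m})^m$, I would first reduce to proving continuity for small $t$, which makes the number of jumps easier to control.

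The backbone of the argument is the first-jump (Duhamel) decomposition coming directly from \Cref{def:LBMJP}: conditioning on the first holding time $\tau_1 \sim \textup{Exp}(\lambda(x))$ and the first jump $X_1 \sim \Gamma_g(x,\cdot)$ gives
\[
P_t f(x) = e^{-\lambda(x)t} f(x) + \int_0^t \lambda(x) e^{-\lambda(x)s}\, \Gamma_g(P_{t-s}f)(x)\, ds .
\]
Iterating this identity expresses $P_t f = \sum_{n\ge 0} P_t^{(n)} f$, where $P_t^{(n)} f(x) = \mathbb{E}_x[f(Y_t)\mathbb{I}_{\{N_t = n\}}]$ is the contribution of trajectories with exactly $n$ jumps, defined recursively by $P_t^{(0)} f(x) = e^{-\lambda(x)t} f(x)$ and $P_t^{(n)}f(x) = \int_0^t \lambda(x) e^{-\lambda(x)s}\,\Gamma_g(P_{t-s}^{(n-1)}f)(x)\, ds$. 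I would prove continuity of $P_t f$ from this representation in two stages: (i) each $P_t^{(n)}f$ is continuous and bounded, by induction on $n$; (ii) the series converges locally uniformly, so that the sum is continuous.

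The two regularity facts driving stage (i) are that $\lambda \in C(E)$ and that $\Gamma_g$ maps $C_b(E)$ into $C_b(E)$. Both come from \Cref{regularity.ass:1}: writing $\gamma(x,\cdot)$ as the pushforward of $\nu$ under $h(x,\cdot)$, we have $\lambda(x) = \int_U g\circ t(x,h(x,u))\,\nu(du)$ and, for $\phi\in C_b(E)$, $\lambda(x)\,\Gamma_g\phi(x) = \int_U \phi(h(x,u))\, g\circ t(x,h(x,u))\,\nu(du)$. For fixed $u$ the integrands are continuous in $x$ because $h(\cdot,u)$, $t$, $g$ (\Cref{ass:g_non_decr}) and $\phi$ are continuous. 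Passing the limit $x_k \to x_0$ inside the integral is the first delicate point: Fatou gives lower semicontinuity of $\lambda$ for free, while the matching upper bound needs the dominating family $g\circ t \le 1 + t$ from \Cref{lem:gbound1}(i), together with the observation that $\int_U t(x,h(x,u))\,\nu(du) = \int_E t(x,y)\gamma(x,dy)$ is the $\pi$-density of $\int \pi(dy)\gamma(y,\cdot)$, allowing a generalized dominated convergence (Pratt/Scheffé) argument for the interchange. Given continuity of $\lambda$ and of $\Gamma_g\phi$, the induction closes: if $P^{(n-1)}_{\cdot}f$ is continuous and bounded by $\|f\|_\infty$, then $\Gamma_g(P^{(n-1)}_{t-s}f)$ is continuous in $x$, the integrand in the recursion is dominated near $x_0$ by $\lambda(x)e^{-\lambda(x)s}\|f\|_\infty \le (\sup_{B}\lambda)\|f\|_\infty$ on a neighbourhood $B$ of $x_0$ (using local boundedness of the continuous $\lambda$), and dominated convergence gives continuity of $P_t^{(n)}f$.

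The main obstacle is stage (ii): controlling the tail of the series uniformly near $x_0$. Since $|P_t^{(n)}f(x)| \le \|f\|_\infty\, \mathbb{P}_x(N_t = n)$ and $\sum_n \mathbb{P}_x(N_t = n) = 1$ by non-explosivity, the series converges pointwise, but local uniform convergence requires $\sup_{x\in B}\mathbb{P}_x(N_t \ge N) \to 0$ as $N\to\infty$. This is exactly where unbounded $\lambda$ bites, since excursions into high-rate regions can produce many jumps in a short time, and a first-moment/Markov bound is too weak to sum. I would attack this by combining local boundedness of $\lambda$ near $x_0$ (controlling the first holding time) with the expected-jump estimate of \Cref{lem:finite.expected.jumps:0}, reducing to small $t$ and exploiting reversibility and $\int\lambda\,d\pi<\infty$ from \Cref{prop:well-posed} to bound the time spent in high-rate regions, thereby upgrading $\mathbb{E}_x[N_t] = \mathbb{E}_x[\int_0^t\lambda(Y_s)\,ds]$ into a uniform tail estimate on a neighbourhood of $x_0$. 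Once the series converges locally uniformly, $P_t f$ is a locally uniform limit of continuous functions and hence continuous, which with boundedness gives $P_t f \in C_b(E)$ and the weak Feller property.
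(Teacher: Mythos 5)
Your overall route is genuinely different from the paper's. The paper proves continuity of $P_tf$ \emph{pathwise}: using \Cref{regularity.ass:1} it realises all processes $Y^y$ simultaneously on common randomness $(E_n)_n \sim \mathrm{Exp}(1)$ and $(U_n)_n \sim \nu$, shows by induction that the jump positions $X^y_k = h(X^y_{k-1}, U_k)$ and the jump times $T^y_k$ depend continuously on the starting point $y$ (continuity of $\lambda$, \Cref{lemma.continuous.rate:0}, is the key lemma, and your Pratt-type argument for continuity of $\lambda$ and of $\Gamma_g\phi$ is essentially a re-derivation of it), then concludes $Y^y_t \to Y^x_t$ a.s.\ because $t$ a.s.\ falls strictly between two consecutive jump times of $Y^x$, and finishes by bounded convergence. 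This sidesteps any need to control the number of jumps uniformly in the starting point, which is precisely where your analytic (Duhamel-series) route runs into trouble. Your stage (i) is sound and uses the same ingredients as the paper's \Cref{lemma.continuous.rate:0}.

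The genuine gap is stage (ii). You need $\sup_{x \in B}\mathbb{P}_x(N_t \geq N) \to 0$ near $x_0$, and you propose to extract it from \Cref{lem:finite.expected.jumps:0} together with reversibility and $\int \lambda\, d\pi < \infty$. This does not follow: \Cref{lem:finite.expected.jumps:0} gives $\mathbb{E}_x[n_T] < \infty$ only for $x$ in a set $S$ of full $\pi$-measure, obtained by integrating against the stationary law, so it provides no control at a fixed exceptional $x_0$ and certainly no uniformity over a neighbourhood; moreover nothing in the standing assumptions rules out $\mathbb{E}_x[N_t] = \infty$ at individual points, since non-explosivity is an almost-sure statement, the first jump may land in regions of arbitrarily large $\lambda$, and stationarity only controls $\pi$-averages. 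The proposed upgrade is therefore unsupported as stated. Your architecture can nonetheless be completed without it: applying stage (i) to $f \equiv 1$ shows that $F_N(x) = \mathbb{P}_x(N_t \leq N)$ is continuous, $F_N$ increases pointwise to the constant $1$ by non-explosivity (\Cref{stability.theorem.final:00}), and since $E$ is metrizable, Dini's theorem applied on the compact set $\{x_k\}_{k} \cup \{x_0\}$ for an arbitrary sequence $x_k \to x_0$ gives exactly the uniform tail control needed for sequential continuity of the sum — no moment bounds required. Finally, note that the paper's definition of weak Feller here also includes the pointwise convergence $P_tf(x) \to f(x)$ as $t \to 0$, which the paper verifies via $|P_tf(x) - f(x)| \leq 2\|f\|_\infty\left(1 - \exp\{-\lambda(x)t\}\right)$; your proposal omits this check, although it is immediate from your $n=0$ term.
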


The proof of \Cref{Feller.proposition:1} relies on the following lemma.

\begin{lemma}\label{lemma.continuous.rate:0}
   Under Assumptions \ref{ass.finite.lambda} and \ref{regularity.ass:1}, the rate $\lambda$ of the LBMJP is a continuous function. 
\end{lemma}

\begin{proof}[Proof of \Cref{lemma.continuous.rate:0}]
     Using Assumption \ref{regularity.ass:1} we write for any $y \in E$,
    \begin{equation*}
        \lambda(y)=\int_E g \circ t(y,z) \gamma(y,dz)=\int_{U}g \circ t(y,h(y,u)) \nu(du).
    \end{equation*}
Let us fix $x \in E$ and $u \in U$. From Assumption   \ref{regularity.ass:1}.2, we have $\lim_{y \rightarrow x} g \circ t(y,h(y,u))=g \circ t(x,h(x,u))$. For some $\epsilon > 0$, consider the ball $B(x,\epsilon)$ with radius $\epsilon$, centred at $x$.  Then, using Assumption \ref{regularity.ass:1}.2, there exists an $M_x$ such that $t(y,h(y,u)) \leq M_x$ for all $y \in B(x,\epsilon)$. By \Cref{lem:gbound1} this implies that for all $y \in B(x,\epsilon)$
\begin{align*}
    g \circ t(y,h(y,u))\leq 1+ t(y,h(y,u)) \leq 1+M_x.
\end{align*}
By the Bounded Convergence Theorem this then implies that $\lim_{y \rightarrow x} \lambda(y) =\lambda(x)$.
\end{proof}

\begin{proof}[Proof of  \Cref{Feller.proposition:1}]
We construct the process using a sequence of i.i.d. variables $E_1, E_2, \\
\dots \sim \text{Exp}(1)$ and $U_1,U_2,... \sim \nu$ % as in (\ref{regularity.ass:1} 
 in the following way. Assume that the process starts from $x \in E$. We set $\tau^x_1=\lambda(x) E_1$ and $\tau^x_1=T^x_1$ the first jump time. $Y_s=x$ for $s \in [0,\tau^x_1)$. Then the process jumps to $h(x,U_1)$ so $Y^x_{T_1}=h(x,U_1)$. The process is then constructed inductively, i.e. if it has been constructed until the $n$'th jump time $T^x_n$ then we set $\tau^x_{n+1}=E_{n+1}\lambda(Y^x_{\tau_n})$, $T^x_{n+1}=T^x_n+\tau^x_{n+1}$, $Y^x_{s}=Y^x_{T^x_{n}}$ for $s \in [T^x_{n},T^x_{n+1})$ and $Y^x_{T^x_{n+1}}=h(Y^x_{T^x_n},U_{n+1})$.

Let us fix $t \geq 0$, $x \in E$ and let us fix a specific configuration of these $E_1, E_2,...$ and $U_1,U_2,...$. Our goal will be to prove that a.s. on the configuration of $E$'s and $U$'s, $\lim_{y \rightarrow x}Y^y_t=Y^x_t$. If we have that then for any $f \in C_b(E)$, we have $\lim_{y \rightarrow x}f(Y^y_t)=f(Y^x_t)$ a.s. and from bounded convergence, $P_tf(y)=\mathbb{E}_y\left[ f(Y_t) \right] \xrightarrow{y \rightarrow x}\mathbb{E}_x\left[ f(Y_t) \right]=P_tf(x)$ which proves that $P_tf$ is continuous.

We now turn on proving that a.s. $Y^y_t\xrightarrow{y \rightarrow x}Y^x_t$. Due to non-explosivity of the process, a.s. there exists $n \in \mathbb{N}$ such that $t \in [T^x_n,T^x_{n+1})$ and since a.s. $T^x_n \neq t$, $t \in (T^x_n,T^x_{n+1})$. Consider the discrete time chain $\left(X^y_k\right)_{k \geq 0}$
defined as $X^y_k=Y^y_{\tau^y_k}$. Since $h(\cdot,u)$ is continuous for all $u$, we observe that 
\begin{equation*}
  \lim_{y \rightarrow x}X^y_1=h(y,U_1)=h(x,U_1)=X^x_1,
\end{equation*}
and since
\begin{equation*}
X^y_{k+1}=h(X^y_k,U_{k+1}).
\end{equation*}
using induction we get that for any $k \in \mathbb{N}$, $X^y_k$ is continuous with resepct to $y$. 
Furthermore, we can write 
\begin{equation*}
    T^y_1=\lambda(y)E_1
\end{equation*}
which is continuous with respect to $y$ since $\lambda$ is continuous. We also observe that
for any $k \in \mathbb{N}$, 
\begin{equation*}
    T^y_{k+1}=T^y_k+\lambda(X^y_k)E_{k+1}
\end{equation*}
so if $T^y_k$ is continuous, then $T^y_{k+1}$ is as well. Via induction, $T^y_k$ is continuous with respect to $y$ for all $k$. Therefore, since $T^y_n \in \mathbb{N}$ for all $y$, there exists $\delta >0$ such that if $y \in B(x,\delta)$, then $T^y_n=T^x_n$. This further implies that $t \in (T_n^y,T_{n+1}^y)$ for any $y \in B(x,\delta)$. Therefore,
\begin{equation*}
    Y^x_t=X^x_{Y^x_n}=\lim_{y \rightarrow x} X^y_{T^x_n}=\lim_{y \rightarrow x} X^y_{T^y_n}=\lim_{y \rightarrow x} Y^y_t.
\end{equation*}
This proves that $P_tf$ is continuous. Finally, since $f$ is bounded and $(P_t)_{t\geq 0}$ defines a contraction semigroup then $P_tf(x)=\mathbb{E}_x\left[ f(Y_t) \right]$ is also bounded, meaning that $P_tf \in C_b(E)$. 

It remains to prove that for any bounded $f: E \rightarrow \mathbb{R}$, and any $x \in E$ we have
\begin{equation*}
    P_tf(x) \xrightarrow{t \rightarrow 0} f(x).
\end{equation*}
To prove this, we observe that
\begin{align*}
    \left|P_tf(x)-f(x)\right|&=\left|\mathbb{E}_x\left[ f(Y_t) -f(x) \right]\right| \\
    &\leq \mathbb{E}_x\left[ \left| f(Y_t) -f(x) \right| \right] \\
    &=\mathbb{E}_x\left[ \left| f(Y_t) -f(x) \right| \mathbb{I}_{\tau_1 \leq h} \right]+ \mathbb{E}_x\left[ \left| f(Y_t) -f(x) \right| \mathbb{I}_{\tau_1 > h} \right] \\
    &=\mathbb{E}_x\left[ \left| f(Y_t) -f(x) \right| \mathbb{I}_{\tau_1 \leq h} \right]\\
    &\leq 2\| f \|_{\infty} \mathbb{P}_x\left( \tau_1 \leq h \right)\\
    &= 2\| f \|_{\infty} \left(1-\exp\left\{ -\lambda(x)t \right\} \right)\xrightarrow{t \rightarrow0}0.
\end{align*}
This completes the proof.
\end{proof}

\begin{comment}
\subsection{Stability properties}

\begin{definition}
    A Markov process $\left( Y_t \right)_{t \geq 0}$ is called aperiodic if there exists a petite set $C$ and $T>0$ such that for all $x \in C$ and $t \geq 0$, $\mathbb{P}_x\left( Y_t \in C \right) >0$.
\end{definition}
\begin{assumption}\label{ass.upper.bound.rate.petite:0}
    There exists a petite set $C$ and $\lambda^{\ast} < \infty$ such that for any $x \in C$, $\lambda(x) \leq \lambda^{\ast}$.
\end{assumption}

We then have the following result. We state it here for the LBMJP, but observe that the same would hold for any Markov Jump Process.

\begin{proposition}\label{aperiodicity:1}
    If Assumption \ref{ass.upper.bound.rate.petite:0} holds, then the LBMJP is aperiodic.
\end{proposition}

%The proof follows an argument found in the proof of Theorem 5 of \cite{bierkens.roberts.zitt:19}.
\begin{proof}[Proof of Proposition \ref{aperiodicity:1}]
    For any $x \in C$ and any $s \in [0,1]$, by considering the event where no jump occurred before time $1$, we get
    \begin{equation*}
        \mathbb{P}_x\left( Y_s \in C \right) \geq \exp\left\{  -\lambda(x) \right\} \geq \exp\left\{ -\lambda^{\ast} \right\}.
    \end{equation*}
    Let $t \geq 1$. Set $s=\frac{t}{\lceil t \rceil} \in (0,1)$. Then 
    \begin{equation*}
        \mathbb{P}_x\left( Y_t \in C \right) \geq \left(\exp\{ -\lambda^{\ast} \}\right)^{\lceil t \rceil} >0.
    \end{equation*}
\end{proof}
\end{comment}

\subsection{Generator}\label{sec::Generator}
In this sub-section we study the generator of a locally-balanced Markov jump process. This will become important in the next sections when we discuss ergodicity properties and the behaviour of the process in the limit when the size of jumps becomes small.
Write $L$ to denote the operator such that for any  bounded function $f:E \rightarrow \mathbb{R}$, 
we have
\begin{equation}\label{weak.generator.first}
    Lf(x):=\int_E g \circ t(x,y) \left( f(y) - f(x) \right) \gamma(x,dy) = \lambda(x) \left( \Gamma_gf(x) - f(x) \right),
\end{equation}
where $\Gamma_g$ is as in \eqref{def.gamma.g:1}.
It is well known (see e.g.\ Theorem 3.1 \cite{ethier2009markov}) that under suitable conditions the operator defined by
\begin{equation*}
    Af(x)=a(x) \left( \Gamma f(x) -  f(x) \right)
    \end{equation*}
is the (strong) generator of a Markov Jump Process with jump kernel of the form $a \cdot \Gamma$ (see for example \cite{ethier2009markov}). 
Some of these conditions are, however, quite restrictive for our setting. But taking advantage of results of \cite{davis:84} for piecewise deterministic Markov processes, we have the following. 

\begin{theorem}[Weak generator]\label{weak.generator.domain:0}
    Let Assumptions \ref{lambda.positivity:00}-\ref{ass.finite.lambda} hold. Then for any bounded $f:E \rightarrow \mathbb{R}$, $t \geq 0$ and $x \in \textup{supp}(\pi) \cap S$, with $S$ as in Lemma \ref{lem:finite.expected.jumps:0}, 
    the process
    \begin{equation}\label{martingale:001}
        M_t=f(Y_t)-f(x)-\int_0^tLf(Y_s)ds , \ \ t \geq 0
    \end{equation}
    is a martingale with respect the natural filtration of $Y$. Therefore, the domain $\mathcal{D}(L)$ of the weak generator of $Y$ contains all bounded functions $B(E)$, and the {\it weak generator} of the LBMJP restricted on  $B(E)$ is the operator $L$.
\end{theorem}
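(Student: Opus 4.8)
The plan is to recognise the LBMJP as a piecewise-deterministic Markov process (PDMP) in the sense of \cite{davis:84} and then read off its extended (weak) generator. Between consecutive jumps the process is constant, so the deterministic flow is trivial, $\phi_s(x)=x$ for all $s$, and the associated vector-field operator $\mathcal{X}f\equiv 0$ vanishes for every $f$; the remaining PDMP data are the jump rate $\lambda$ from \eqref{eq:lambda} and the jump law $\Gamma_g(x,\cdot)$ from \eqref{def.gamma.g:1}, and since the flow never reaches a boundary the boundary condition in Davis's domain is vacuous. With these identifications Davis's extended generator collapses to its pure-jump part, $\mathcal{A}f(x)=\lambda(x)\int_E\bigl(f(y)-f(x)\bigr)\Gamma_g(x,dy)=Lf(x)$, matching \eqref{weak.generator.first}. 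An alternative self-contained route would be to differentiate the first-jump renewal identity for $u(t,x)=\mathbb{E}_x[f(Y_t)]$, but the PDMP framework is cleaner and is what the integrability hypotheses are tailored to.

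The substantive step is to show that every bounded $f$ lies in the domain of the extended generator, for which the only non-trivial requirement is the integrability condition $\mathbb{E}_x\bigl[\sum_{i:\,T_i\le t}\lvert f(Y_{T_i})-f(Y_{T_i^-})\rvert\bigr]<\infty$ for all $t$. Using $\lvert f(Y_{T_i})-f(Y_{T_i^-})\rvert\le 2\lVert f\rVert_\infty$, this sum is bounded by $2\lVert f\rVert_\infty\, n_t$, whose expectation is finite for every $x\in S$ by \Cref{lem:finite.expected.jumps:0}; this is exactly why the statement restricts to $x\in\textup{supp}(\pi)\cap S$. Davis's generator theorem then gives that $M_t$ in \eqref{martingale:001} is a local martingale, and I would upgrade it to a genuine martingale by dominating it: for $s\le t$, $\lvert M_s\rvert\le 2\lVert f\rVert_\infty\bigl(1+\int_0^t\lambda(Y_u)\,du\bigr)$ using $\lvert Lf(x)\rvert\le 2\lVert f\rVert_\infty\lambda(x)$, and the right-hand side is $\mathbb{P}_x$-integrable since $\mathbb{E}_x[\int_0^t\lambda(Y_u)\,du]=\mathbb{E}_x[n_t]<\infty$, so the localising sequence may be removed by dominated convergence.

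To obtain the martingale property with respect to the natural filtration at all later times, I would verify that the dynamics never leave the good set: starting from $x\in\textup{supp}(\pi)\cap S$, \Cref{ass.finite.lambda} forces $\Gamma_g(x,\cdot)$ to place no mass on $\pi$-null sets, and since $\pi(S)=1$ the embedded jump chain $(X_n)$ — hence $Y_t$ for every $t$ — stays in $\textup{supp}(\pi)\cap S$ almost surely, also using \Cref{stability.theorem.final:00}(b). The Markov property then transports the one-point identity $\mathbb{E}_y[M_t]=0$ into the conditional increments, yielding $\mathbb{E}[M_t\mid\mathcal{F}_s]=M_s$. The final ``therefore'' is immediate from the martingale definition of the weak generator: having produced \eqref{martingale:001} for an arbitrary bounded $f$ with drift $Lf$ is precisely the assertion that $B(E)\subseteq\mathcal{D}(L)$ and that the weak generator coincides with $L$ on $B(E)$. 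The main obstacle I anticipate is bookkeeping around the integrability/domain membership and the local-to-true martingale upgrade: the role of $S$ and \Cref{lem:finite.expected.jumps:0} is to control $\mathbb{E}_x[n_t]$, and one must ensure this control is preserved under the Markov property, which is secured exactly by the invariance of $\textup{supp}(\pi)\cap S$ under the process.
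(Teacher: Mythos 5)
Your proposal is correct and follows essentially the same route as the paper: both cast the LBMJP as a PDMP, verify the integrability condition $\mathbb{E}_x\bigl[\sum_{k\le n_T}|f(Y_{T_k})-f(Y_{T_k^-})|\bigr]\le 2\|f\|_\infty\,\mathbb{E}_x[n_T]<\infty$ via \Cref{lem:finite.expected.jumps:0}, invoke Theorem 5.5 of \cite{davis:84} to obtain a local martingale, and upgrade it to a true martingale by dominating $M$ through the bound $|Lf(y)|\le 2\|f\|_\infty\lambda(y)$ and the finiteness of $\mathbb{E}_x\bigl[\int_0^T\lambda(Y_s)\,ds\bigr]$. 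Your extra remarks on the invariance of $\textup{supp}(\pi)\cap S$ under the dynamics make explicit a point the paper leaves implicit, but they do not change the argument.
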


\begin{proof}[Proof of \Cref{weak.generator.domain:0}]
    From \Cref{lem:finite.expected.jumps:0}, $\mathbb{E}_x\left[ n_T \right]<\infty$ for all $T \geq 0$. Furthermore, since $f$ is bounded, for any $T>0$, letting $Y_{t^-}:=\lim_{s \rightarrow t^-}Y_s$, we have 
    \begin{equation*}
        \mathbb{E}_x \left[ \sum_{k=1}^{n_T} \left|f(Y_{T_k}) - f(Y_{T_k^-})\right| \right] \leq 2 \| f \|_{\infty} \mathbb{E}_x\left[ n_T \right] < \infty.
    \end{equation*}
    From Theorem 5.5 of \cite{davis:84} we get that $M_t$ is a local Martingale. We further observe that for any $y \in E$,
    \begin{equation*}
|Lf(y)|=\left|\int_E\left( f(y)-f(x) \right) g \circ t(x,y) \gamma(x,dy)\right| \leq 2 \| f \|_{\infty} \lambda(x) 
    \end{equation*}
    therefore for any $T \geq 0$
    \begin{equation*}
        \sup_{t \leq T}\left|\int_0^t Lf(Y_s) ds\right| \leq 2 \| f \|_{\infty} \int_0^T \lambda(Y_s)ds
    \end{equation*}
    which has finite expectation since $x \in \mathcal{N}$ and from \Cref{lem:finite.expected.jumps:0}. Since $f$ is bounded, we get that $M$ is a true martingale.
\end{proof}

We also have the following point-wise limit, which will prove useful in Section \ref{sec::Mixing}.

\begin{proposition}\label{point.wise.limit.generator:1}
 Assume that Assumption \ref{ass.finite.lambda} holds. For any bounded $f:E \rightarrow \mathbb{R}$ and any $x \in E$
\begin{equation*}
    \lim_{h \rightarrow 0}\frac{\mathbb{E}_x\left[ f(Y_h)-f(x)  \right]}{h}=Lf(x).
\end{equation*}
\end{proposition}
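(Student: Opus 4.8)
The plan is to condition on the number of jumps $N_h$ of the process in the interval $[0,h]$ and exploit the explicit jump structure of \Cref{def:LBMJP}, assuming as usual that the starting state satisfies $\lambda(x)<\infty$. Decomposing $\mathbb{E}_x[f(Y_h)-f(x)] = \sum_{k\geq 0}\mathbb{E}_x[(f(Y_h)-f(x))\mathbb{I}_{N_h=k}]$, the $k=0$ term vanishes because $Y_h=x$ whenever no jump has occurred. Hence the whole quantity is governed by the one-jump contribution, which should produce $Lf(x)$ in the limit, plus a remainder coming from two or more jumps, which I expect to be $o(h)$.

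For the one-jump term I would condition on the first jump time $\tau_1=s\in[0,h]$, which has density $\lambda(x)e^{-\lambda(x)s}$, and on the destination $X_1=y\sim\Gamma_g(x,\cdot)$; the event $\{N_h=1\}$ then requires $\tau_2>h-s$, of conditional probability $e^{-\lambda(y)(h-s)}$, and on this event $Y_h=y$. This gives $\mathbb{E}_x[(f(Y_h)-f(x))\mathbb{I}_{N_h=1}] = \int_0^h \lambda(x)e^{-\lambda(x)s}\int_E (f(y)-f(x))e^{-\lambda(y)(h-s)}\,\Gamma_g(x,dy)\,ds$. After dividing by $h$ and substituting $s=hu$ with $u\in[0,1]$, the integrand converges, for $\Gamma_g(x,\cdot)$-almost every $y$, to $\lambda(x)(f(y)-f(x))$ and is dominated by $2\lambda(x)\|f\|_\infty$, so dominated convergence yields $\lambda(x)(\Gamma_g f(x)-f(x))=Lf(x)$.

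For the remainder I would use $|\mathbb{E}_x[(f(Y_h)-f(x))\mathbb{I}_{N_h\geq 2}]|\leq 2\|f\|_\infty\,\mathbb{P}_x(N_h\geq 2)$ and show $\mathbb{P}_x(N_h\geq 2)=o(h)$. Since $\{N_h\geq 2\}=\{\tau_1+\tau_2\leq h\}$, the same conditioning gives $\mathbb{P}_x(N_h\geq 2)=\int_0^h \lambda(x)e^{-\lambda(x)s}\int_E (1-e^{-\lambda(y)(h-s)})\,\Gamma_g(x,dy)\,ds$. Dividing by $h$, substituting $s=hu$, and using that $1-e^{-\lambda(y)h(1-u)}\to 0$ for $\Gamma_g(x,\cdot)$-almost every $y$ while remaining bounded by $1$, dominated convergence forces the limit to be $0$.

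The step requiring the most care is this passage to the limit in the two-jump remainder: a priori the rate $\lambda(y)$ at the intermediate state is unbounded and not obviously integrable against $\Gamma_g(x,\cdot)$, so one cannot simply bound $1-e^{-\lambda(y)(h-s)}$ by $\lambda(y)(h-s)$ and factor out a power of $h$. The resolution is that dominated convergence needs only the pointwise decay $1-e^{-\lambda(y)h(1-u)}\to 0$ together with the uniform bound $1$; the pointwise decay holds wherever $\lambda(y)<\infty$, and this is the case for $\Gamma_g(x,\cdot)$-almost every $y$ precisely because \Cref{ass.finite.lambda} forces $\Gamma_g(x,\cdot)\ll\tilde{\pi}$ (equivalently $\ll\pi$) while $\lambda<\infty$ holds $\pi$-almost everywhere by \Cref{prop:well-posed}(a). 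The same absolute-continuity fact justifies the pointwise convergence $e^{-\lambda(y)(h-s)}\to 1$ used in the one-jump term, and combining the three contributions gives the claimed limit $Lf(x)$.
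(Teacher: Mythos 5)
Your proof is correct, but it takes a genuinely different route from the paper's. The paper also conditions on the first jump time $\tau_1=s$ and destination $y\sim\Gamma_g(x,\cdot)$, but it does not split off a two-jump remainder: it keeps the full post-jump future inside the semigroup expression $\mathbb{E}_y[f(X_{h-s})]$, proves that $u\mapsto\mathbb{E}_y[f(X_u)]$ is continuous (via the estimate $|\mathbb{E}_y[f(X_{u+h})]-\mathbb{E}_y[f(X_u)]|\leq 2\|f\|_\infty\,\mathbb{P}_y(\text{jump in }(u,u+h))\to 0$), and then applies the Fundamental Theorem of Calculus to the time average $\frac{1}{h}\int_0^h\mathbb{E}_y[f(X_{h-s})]\lambda(x)e^{-\lambda(x)s}\,ds$, followed by bounded convergence in $y$ using the uniform bound $\|f\|_\infty\lambda(x)$. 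You instead decompose by the jump count $N_h\in\{0,1,\geq 2\}$, replace the post-jump future by the explicit survival factor $e^{-\lambda(y)(h-s)}$, and control the remainder by showing $\mathbb{P}_x(N_h\geq 2)=o(h)$ via dominated convergence. Each approach has its merits: yours is more elementary and self-contained, using only the exponential structure of the first two holding times, and it makes fully explicit where \Cref{ass.finite.lambda} enters --- namely that $\lambda(y)<\infty$ for $\Gamma_g(x,\cdot)$-a.e.\ $y$, by combining \Cref{ass.finite.lambda} with \Cref{prop:well-posed}(a); your remark that one cannot bound $1-e^{-\lambda(y)(h-s)}$ by $\lambda(y)(h-s)$ (since $\lambda$ need not be $\Gamma_g(x,\cdot)$-integrable), and that pointwise decay plus the bound $1$ suffice, is precisely the right observation. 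The paper's argument, by contrast, absorbs all futures with one or more jumps in a single stroke and never needs the two-jump probability, at the price of the continuity lemma for $u\mapsto\mathbb{E}_y[f(X_u)]$, which implicitly relies on local finiteness of the jumps from the intermediate state. Two small points worth flagging in your write-up: you should fix the convention $f(\partial):=0$ (or any bounded value) so that a possible explosion before time $h$ is absorbed into your $N_h\geq 2$ remainder bound, and your standing assumption $\lambda(x)<\infty$ at the starting point matches what the paper also implicitly uses when writing the density $\lambda(x)e^{-\lambda(x)s}$ of $\tau_1$.
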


\begin{proof}[Proof of \Cref{point.wise.limit.generator:1}]
The proof is presented in \ref{appendix:pointwise.limit:000}.
\end{proof}

Next we focus on the case of bounded $g$.  Here the generator has numerous additional regularity properties.

\begin{proposition}\label{prop:bounded.generator:1}
Under Assumption \ref{lambda.positivity:00}, if $g$ is bounded above then the following hold:
\begin{enumerate}
    \item[(a)] $L$ is a bounded operator on $L^2(\pi)$.
    \item[(b)] The domain $\mathcal{D}(L) = L^2(\pi)$.
    \item[(c)] $L$ generates a uniformly continuous contraction semi-group $(P_t)_{t\geq 0}$. 
    \item[(d)] The semi-group can be written
    $$
    P_t := \exp(tL) = \sum_{n=0}^\infty \frac{t^n}{n}L^n.
    $$
\end{enumerate}
\end{proposition}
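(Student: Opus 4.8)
The plan is to exploit the decomposition $L = J - M_\lambda$, where I write $Jf(x) := \int_E f(y)\,J(x,dy)$ for the integral operator associated with the jump kernel and $M_\lambda$ for multiplication by $\lambda$. Since $g$ is bounded above by $C := \sup_t g(t) < \infty$, the definition \eqref{eq:lambda} immediately gives $\lambda(x) = \int_E g\circ t(x,y)\gamma(x,dy) \leq C$ for every $x$, so $M_\lambda$ is a bounded self-adjoint operator on $L^2(\pi)$ with $\|M_\lambda\| \leq C$. In this way the whole of parts (a) and (b) reduces to showing that $J$ is bounded on $L^2(\pi)$.

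For that step I would argue as follows. Writing $J(x,dy) = g\circ t(x,y)\gamma(x,dy)$ and applying Cauchy--Schwarz with respect to the finite measure $J(x,\cdot)$, whose total mass is $\lambda(x) \leq C$, yields the pointwise bound $|Jf(x)|^2 \leq \lambda(x)\,J(f^2)(x) \leq C\,J(f^2)(x)$. Integrating against $\pi$ and invoking the reversibility of $J$ from \Cref{prop:reversible_jumpkernel}, namely $\pi(dx)J(x,dy) = \pi(dy)J(y,dx)$, I obtain $\int_E J(f^2)\,d\pi = \int_E f^2\,\lambda\,d\pi \leq C\|f\|_{L^2(\pi)}^2$, and hence $\|Jf\|_{L^2(\pi)} \leq C\|f\|_{L^2(\pi)}$. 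Therefore $L = J - M_\lambda$ is bounded with $\|L\| \leq 2C$, which is (a); and since a bounded, everywhere-defined operator has full domain, (b) is immediate. The same reversibility identity shows that $J$ is self-adjoint on $L^2(\pi)$, and $M_\lambda$ trivially is, so $L$ is self-adjoint.

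For (c) the key additional fact is that $L$ is negative semidefinite. I would establish the standard Dirichlet-form identity $\langle -Lf, f \rangle_\pi = \tfrac{1}{2}\int_{E\times E}(f(y)-f(x))^2\,J(x,dy)\,\pi(dx) \geq 0$, obtained by expanding the square and using reversibility to symmetrize the $f(x)^2$ and $f(y)^2$ contributions. Thus $L$ is a bounded self-adjoint operator with spectrum contained in $[-\|L\|, 0]$. The uniform continuity of $(P_t)_{t\geq 0}$ together with the norm-convergent series representation of (d) are then the standard consequences of $L$ being bounded, so that $P_t := e^{tL} = \sum_{n\geq 0}\frac{t^n}{n!}L^n$ converges in operator norm and $\|P_t - I\| \to 0$ as $t \to 0$; the contraction property follows from the functional calculus, since $\sigma(L) \subseteq (-\infty, 0]$ forces $\|P_t\| = \sup_{s \in \sigma(L)} e^{ts} \leq 1$.

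The only genuinely non-routine step is the $L^2(\pi)$-boundedness of $J$: once the Cauchy--Schwarz-plus-reversibility estimate is in place, everything else is textbook semigroup theory for bounded generators. (I note in passing that the exponent in the statement of (d) should read $t^n/n!$ rather than $t^n/n$.)
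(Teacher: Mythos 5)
Your proof is correct, and it takes a route that differs in detail from the paper's. The paper uniformises the jump rate: setting $\bar{\lambda} := \sup_x \lambda(x) \leq \sup_t g(t)$, it rewrites $Lf(x) = \bar{\lambda}\int [f(y)-f(x)]\,\bar{\Gamma}_g(x,dy)$ for the lazy kernel $\bar{\Gamma}_g(x,dy) = \bigl(1-\lambda(x)/\bar{\lambda}\bigr)\delta_x(dy) + \bigl(\lambda(x)/\bar{\lambda}\bigr)\Gamma_g(x,dy)$, observes that $\bar{\Gamma}_g$ is a $\pi$-reversible Markov kernel and hence a contraction on $L^2(\pi)$, deduces $\|L\| \leq 2\bar{\lambda}$, and then dispatches (b)--(d) by citing standard semigroup theory for bounded generators (Theorem 1.2 of \cite{pazy2012semigroups}). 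You instead split $L = J - M_\lambda$ and bound the jump operator $J$ directly via Cauchy--Schwarz against the sub-probability-scaled measure $J(x,\cdot)$ combined with the reversibility identity of \Cref{prop:reversible_jumpkernel}; this is morally the same mechanism (boundedness of $\lambda$ plus $\pi$-reversibility is what makes both arguments run), but your version avoids introducing the auxiliary lazy kernel. Where your route genuinely buys something is part (c): the paper's citation directly covers boundedness and uniform continuity, while the contraction property is left implicit (it would follow, e.g., from $P_t = e^{-\bar{\lambda}t}e^{\bar{\lambda}t\bar{\Gamma}_g}$ and $\|\bar{\Gamma}_g\|\leq 1$); you supply a self-contained proof via self-adjointness, the Dirichlet-form identity $\langle -Lf, f\rangle_\pi = \tfrac{1}{2}\int (f(y)-f(x))^2\,J(x,dy)\,\pi(dx) \geq 0$, and the spectral calculus, which pins $\sigma(L) \subseteq [-\|L\|,0]$ and hence $\|P_t\|\leq 1$ explicitly. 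Your parenthetical remark is also correct: the exponent in the statement of (d) should indeed read $t^n/n!$ rather than $t^n/n$; this is a typo in the statement, and both your series and the paper's intended one are the standard operator exponential.
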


\begin{proof}[Proof of \Cref{prop:bounded.generator:1}]
For (a) set $\lambda:= \sup_x \lambda(x)$. Then we can equivalently define the process as evolving with constant rate (see Ethier \& Kurtz Section 4.2) and write the generator as
$
Lf(x) = \lambda \int [f(y)-f(x)] \bar{\Gamma}_g(x,dy),
$
where
$$
\bar{\Gamma}_g(x,dy) := \left( 1 - \frac{\lambda(x)}{\lambda}\right)\delta_x(dy) + \frac{\lambda(x)}{\lambda}\Gamma_g(x,dy).
$$
Note that 
\begin{comment}
$$
\begin{aligned}
\lambda(x)\pi(dx)\Gamma_g(x,dy) 
&= 
g\left(\frac{\pi(dy)\gamma(y,dx)}{\pi(dx)\pi(dx)\gamma(x,dy)}\right)\gamma(x,dy) \\
&=
g\left(\frac{\pi(dx)\gamma(x,dy)}{\pi(dy)\gamma(y,dx)}\right)\pi(dy)\gamma(y,dx) \\
&=
\lambda(y)\pi(dy)\Gamma_g(y,dx),
\end{aligned}
$$
\end{comment}
$\bar{\Gamma}_g$ is $\pi$-reversible and 
$
\bar{\Gamma}_g(x,E) = 1,
$
implying that the corresponding operator $\bar{\Gamma}_gf(x) := \int f(y)\bar{\Gamma}(x,dy)$ for $f \in L^2(\pi)$ satisfies $\|\bar{\Gamma}_g\| \leq 1$.  For any $f \in \mathcal{D}(L)$ it therefore holds that
$$
\begin{aligned}
\|Lf\|^2 &= \int (Lf(x))^2 \pi(dx)
= \int \lambda^2 (\bar{\Gamma}_g f(x) - f(x))^2 \pi(dx)
%&= \int \lambda^2 \left( \bar{\Gamma}_g f(x) \right)^2 \pi(dx)
%+ \int \lambda^2 f(x)^2 \pi(dx)
%- 2\int \lambda^2 f(x) \bar{\Gamma}_g f(x) \pi(dx) \\
\leq \lambda^2\|\bar{\Gamma}_g f\|^2 + \lambda^2\|f\|^2 + 2\lambda^2\|f\|\|\bar{\Gamma}_g f\|,
%&\leq 4\lambda^2\|f\|^2,
\end{aligned}
$$
implying that $\|L\| <\infty$ as required.  Parts (b), (c) and (d) follow directly from (a), see e.g.\ Theorem 1.2 of \cite{pazy2012semigroups}.
\end{proof}

\section{Mixing properties}\label{sec::Mixing}

Here we study exponential and uniform ergodicity properties of a locally-balanced Markov jump process. In Section \ref{sec:expo.ergo:000} we define a weak notion of spectral gap, which implies exponential ergodicity for the process. We then show that in the case of bounded $g$ there is an equivalence between the existence of a spectral gap for locally-balanced processes and Metropolis--Hastings algorithms in Section \ref{subsubsec:lb_vs_mh}.  After this we provide tools to compare two different locally-balanced processes with different choices of $g$ in Section \ref{subsubsec:compare}.  Finally, in Section \ref{subsec:unif_erg} we show conditions under which a locally-balanced process can be uniformly ergodic.

\subsection{Spectral gaps and exponential ergodicity}\label{sec:expo.ergo:000}

%\sjlcom{Currently this section is in a sense formal, because generally the Dirichlet form is defined in terms of the strong generator. I believe it is sufficient to establish that a process os $C_b$-Feller to define a strong generator. But with what we have at the moment and the weak generator I don't know if we can work directly with Dirichlet forms. I think we can probably show $C_b$-Feller, it's weaker than Feller--Dynkin. In any case, if we were forced to assume Feller--Dynkin for this section then it wouldn't be the worst because we treat the case that is not in the uniform ergodicity section.

%UPDATE: The arguments do actually seem to work with what we have, we just should be careful to call it a modified spectral gap of sorts and stick to bounded functions.  I have updated the text to reflect this.}

For any $f \in \mathcal{B}(E)$ we define a Dirichlet form associated with $L$ (defined in \eqref{weak.generator.first}) as
\begin{equation}
    \mathcal{E}(L,f) := \langle f,-Lf\rangle_\pi.
\end{equation}

Note that typically the Dirichlet form is defined using the strong generator.  Here we instead use the weak generator and rely on the point-wise convergence established in \Cref{point.wise.limit.generator:1}, which will be sufficient for our needs whilst also allowing us to avoid questions about the domain of the strong generator.  We will define a form of spectral gap for the process as
\begin{equation}\label{our.spectral}
    \gamma_L := \inf_{f \in \mathcal{B}(E)}\frac{\mathcal{E}(L,f)}{\text{Var}_\pi(f)}.
\end{equation}
We note the small difference to the usual definition of spectral gap $\text{G}(L)$, defined as 
\begin{equation}\label{general.spectral}
    \text{G}(L) := \inf_{f \in L^2(\pi)}\frac{\mathcal{E}(L,f)}{\text{Var}_\pi(f)}
\end{equation}

when $L$ is the strong generator.  For a LBMJP the seemingly weaker condition $\gamma_L > 0$ in fact directly implies exponential ergodicity in total variation distance provided that the process is initialised from a distribution $\mu$ that is absolutely-continuous with respect to $\pi$ with finite $\chi^2-$divergence, as stated below.

\begin{proposition}[Exponential ergodicity under spectral gap] \label{prop:geometric_ergodicity}
Let Assumptions \ref{lambda.positivity:00}-\ref{ass.finite.lambda} hold. If an LBMJP with balancing function $g$ and weak generator $L$ is such that $\gamma_L > 0$ and $Y_0 \sim \mu$ with $\mu \ll \pi$, then 
\begin{equation}
    \|\mu P_t - \pi\|_{TV} \leq \frac{1}{2}\chi^2(\mu\|\pi)^{\frac{1}{2}}\exp\{-\gamma_L t\}.
\end{equation}
    
\end{proposition}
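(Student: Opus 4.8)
The plan is to reduce the total variation bound to an $L^2(\pi)$ contraction estimate for the semigroup, and then to extract that estimate directly from the weak-generator spectral gap $\gamma_L$. Write $h := d\mu/d\pi$, which lies in $L^2(\pi)$ precisely because $\chi^2(\mu\|\pi) = \int_E (h-1)^2\,d\pi = \|h-1\|_{L^2(\pi)}^2 < \infty$. Since $Y$ is $\pi$-reversible (\Cref{non.explo.theorem:1}), each $P_t$ is self-adjoint on $L^2(\pi)$, and a short duality computation identifies $d(\mu P_t)/d\pi = P_t h$: for bounded $\phi$, $\int \phi\, d(\mu P_t) = \int P_t\phi\, d\mu = \langle P_t\phi, h\rangle_\pi = \langle \phi, P_t h\rangle_\pi$. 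Using $P_t\mathbf{1} = \mathbf{1}$ and $\pi(h-1) = 0$ then gives $P_t h - 1 = P_t(h-1)$ with $h-1$ in the mean-zero subspace $L^2_0(\pi)$, so that by Cauchy--Schwarz
\[
\|\mu P_t - \pi\|_{TV} = \tfrac12\|P_t h - 1\|_{L^1(\pi)} \le \tfrac12\|P_t(h-1)\|_{L^2(\pi)}.
\]
The claim therefore follows once we show $\|P_t g\|_{L^2(\pi)} \le e^{-\gamma_L t}\|g\|_{L^2(\pi)}$ for every $g \in L^2_0(\pi)$.

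The core is to prove this $L^2$ decay first for bounded mean-zero $f$ and then extend by density. For such $f$, set $u_t := P_t f$, which remains bounded (Markov contraction on $\mathcal{B}(E)$) and mean-zero (invariance), hence lies in $\mathcal{B}(E)$, the class over which $\gamma_L$ is defined. Put $\psi(t) := \|u_t\|_{L^2(\pi)}^2 = \mathrm{Var}_\pi(u_t)$. Self-adjointness and the semigroup property give $\|P_h u_t\|^2 = \langle u_t, P_{2h} u_t\rangle_\pi$, so
\[
\frac{\psi(t+h) - \psi(t)}{h} = \frac{\langle P_{2h} u_t - u_t,\, u_t\rangle_\pi}{h} = 2\,\frac{\langle P_{2h} u_t - u_t,\, u_t\rangle_\pi}{2h}.
\]
Now \Cref{point.wise.limit.generator:1} gives $(P_s u_t(x) - u_t(x))/s \to L u_t(x)$ pointwise, while the bound $|P_s u_t(x) - u_t(x)| \le 2\|u_t\|_\infty(1 - e^{-\lambda(x)s}) \le 2\|u_t\|_\infty \lambda(x)s$ (only a jump can change the value, and $\mathbb{P}_x(\tau_1 \le s) = 1 - e^{-\lambda(x)s}$) supplies the $\pi$-integrable dominating function $2\|u_t\|_\infty^2\lambda$, since $\lambda \in L^1(\pi)$ by \Cref{prop:well-posed}. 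Dominated convergence then yields the right derivative $\tfrac{d^+}{dt}\psi(t) = 2\langle L u_t, u_t\rangle_\pi = -2\mathcal{E}(L, u_t) \le -2\gamma_L\,\mathrm{Var}_\pi(u_t) = -2\gamma_L\psi(t)$. As $\psi$ is continuous (e.g.\ $\psi(t) = \langle f, P_{2t} f\rangle_\pi$ is continuous by bounded convergence using $P_s f(x) \to f(x)$ from the proof of \Cref{Feller.proposition:1}), a Grönwall argument for the Dini derivative gives $\psi(t) \le \psi(0)e^{-2\gamma_L t}$, i.e.\ $\|P_t f\| \le e^{-\gamma_L t}\|f\|$. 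Finally, for general $g \in L^2_0(\pi)$ take bounded mean-zero approximants $g_n \to g$ in $L^2$ (truncate and recentre); since $P_t$ is an $L^2(\pi)$-contraction (Jensen plus invariance), the estimate passes to the limit, and combining with the display above completes the proof.

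The main obstacle is the mismatch between the object assumed positive --- the spectral gap $\gamma_L$ defined through the \emph{weak} generator on the restricted class $\mathcal{B}(E)$ --- and the $L^2$-semigroup decay one wants, which is classically obtained from the \emph{strong} generator via spectral calculus. The device bridging this is the self-adjointness identity $\|P_h u\|^2 = \langle u, P_{2h} u\rangle$, which collapses the derivative of $\psi$ into a single difference quotient of the semigroup applied to the fixed bounded function $u_t$; this is exactly the form controlled by \Cref{point.wise.limit.generator:1}, with the elementary jump-probability bound providing the domination needed to exchange limit and integral. One must take care that $P_t f$ remains in $\mathcal{B}(E)$ so the defining inequality for $\gamma_L$ applies at each $t$, and that the density extension uses only the assumption-free $L^2$-contractivity of $P_t$ rather than any decay estimate.
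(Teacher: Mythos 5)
Your argument is correct, and its engine coincides with the paper's: both establish the differential inequality $\frac{d}{dt}\mathrm{Var}_\pi(P_tf)\le -2\gamma_L\,\mathrm{Var}_\pi(P_tf)$ for bounded $f$ by combining \Cref{point.wise.limit.generator:1} with the domination $\lambda\in L^1(\pi)$ from \Cref{prop:well-posed}, then apply Gr\"onwall and pair against $d\mu/d\pi-1$ via Cauchy--Schwarz. Where you genuinely diverge is the reduction of total variation to $L^2$. The paper writes $\|\mu P_t-\pi\|_{TV}=\sup_{A}\left|\int\left(P_t\mathbb{I}_A-\pi(A)\right)\left(\tfrac{d\mu}{d\pi}-1\right)d\pi\right|$ and applies the variance decay only to the indicators $\mathbb{I}_A$, so it never leaves $\mathcal{B}(E)$ and needs neither self-adjointness of $P_t$ nor any approximation step. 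You instead prove the operator-norm decay $\|P_t g\|_{L^2(\pi)}\le e^{-\gamma_L t}\|g\|_{L^2(\pi)}$ on the mean-zero subspace --- first for bounded $g$, then for all of $L^2_0(\pi)$ by truncation and $L^2$-contractivity --- and bound TV by $\tfrac12\|P_t(h-1)\|_{L^2(\pi)}$ through the duality $d(\mu P_t)/d\pi=P_th$, which requires the $\pi$-reversibility of $Y$ (\Cref{non.explo.theorem:1}(b)) to make $P_t$ self-adjoint. Your route costs this extra machinery but buys a strictly stronger intermediate conclusion (spectral-gap decay on all of $L^2_0(\pi)$, not just for indicators), and your differentiation device is, if anything, more careful than the paper's: the identity $\psi(t+h)=\langle u_t,P_{2h}u_t\rangle_\pi$ collapses everything to a single difference quotient, and your bound $|P_su_t(x)-u_t(x)|\le 2\|u_t\|_\infty(1-e^{-\lambda(x)s})\le 2\|u_t\|_\infty\lambda(x)s$ dominates the quotients themselves, whereas the paper dominates only the limiting derivative (implicitly invoking the mean value theorem to justify the interchange of derivative and integral). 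The two points you gloss --- continuity of $\psi$ and the Dini-derivative Gr\"onwall lemma --- are standard and fillable exactly as you sketch; just note that self-adjointness of $P_t$ on $L^2(\pi)$, obtained from reversibility for bounded functions, should be extended by density before being applied with one argument equal to $h\in L^2(\pi)$.
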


\begin{proof}
    Let $f \in B(E)$. We have $\mathcal{E}(L,f) \geq \gamma_L\text{Var}_\pi(f)$. By  \Cref{stability.theorem.final:00}, since $\pi$ is invariant for the process, it holds that
    $$
    \frac{d}{dt}\text{Var}_\pi(P_tf) = \frac{d}{dt}\|P_tf\|^2_\pi = \frac{d}{dt}\int P_tf(x)^2 \pi(dx).
    $$
    Note that by Proposition \ref{point.wise.limit.generator:1}, point-wise the derivative of $P_tf(x)^2$ with respect to $t$ is $2P_tf(x)LP_tf(x)$. Furthermore, using the same argument as in the Proof of \Cref{weak.generator.domain:0}, we have   $|2P_tf(x)LP_tf(x)| \leq 4\|f\|_{\infty}^2\lambda(x)$, which is $\pi$-integrable using \Cref{prop:well-posed}(b) and recalling that $f \in \mathcal{B}(E)$.  Therefore  we can switch the integral and derivative in the above and we have
    $$
    \frac{d}{dt}\text{Var}_\pi(P_tf) = 2\langle P_tf, LP_t f\rangle_\pi = -2\mathcal{E}(L,P_tf) \leq -2\gamma_L\text{Var}_\pi(P_tf). 
    $$
    Applying the Gr{\"o}nwall inequality and using the semi-group property of $P_t$ then gives the familiar result
    \begin{equation} \label{eq:l2contraction}
        \text{Var}_\pi(P_tf) \leq e^{-2\gamma_L t}\text{Var}_\pi(f)
    \end{equation}
    for $f \in \mathcal{B}(E)$.  Now turning to the total variation distance, using that $\pi(P_tf) = \pi(f)$ and applying the Cauchy--Schwarz inequality shows that
    \begin{align}
        \| \mu P^t - \pi \|_{TV} = \sup_{A \in \mathcal{E}}  \left|\int (P_t\mathbb{I}_A(x) - \pi(A))\left(\frac{d\mu}{d\pi}(x)-1\right)\pi(dx)\right| \leq \chi^2(\mu\|\pi)^{\frac{1}{2}}\sup_{A \in \mathcal{E}} \text{Var}_\pi(P_t\mathbb{I}_A)^{\frac{1}{2}}.
    \end{align}
    Finally applying \eqref{eq:l2contraction} and noting that $\text{Var}_\pi(\mathbb{I}_A) \leq 1/4$ for any $A \in \mathcal{E}$ gives the result.
\end{proof}

In the next two sub-sections we will provide conditions that guarantee existence of a spectral gap for the process. We first treat the case where the balancing function $g$ is bounded before generalising to the unbounded case.
\subsubsection{Bounded $g$ and connections with Metropolis--Hastings}\label{subsubsec:lb_vs_mh}

When $g$ is bounded the locally-balanced Markov jump process can be strongly connected to Metropolis--Hastings algorithms.  We will call a Markov transition kernel of Metropolis--Hastings type if it can be written
$$
P(x,dy) := \alpha(x,y)\gamma(x,dy) + \left( \int (1-\alpha(x,y))\gamma(x,dy) \right) \delta_x(dy),
$$
for some $\alpha$ satisfying $\alpha(x,y)/\alpha(y,x) = t(x,y)$. Note that for any balancing function $g \leq 1$, choosing $\alpha(x,y)=g(t(x,y))$ is a valid choice.  We further observe that the generator of any LBMJP with $g \leq \overline{\lambda}$ for some $\overline{\lambda}<\infty$ can be written
$$
Lf(x) = \bar{\lambda} \int [f(y) - f(x)] \bar{\Gamma}_g(x,dy),
$$
where
$$
\bar{\Gamma}_g(x,dy) = \left( 1 - \int \frac{g\left( t(x,y) \right)}{\overline{\lambda}} \gamma(x,dy) \right)\delta_x(dy) + \frac{g\left( t(x,y) \right)}{\overline{\lambda}} \gamma(x,dy).
$$
Note, therefore, that $\bar{\Gamma}_g$ is a Markov transition kernel of Metropolis--Hastings type with the choice of acceptance rate $\tilde{\alpha}(x,y)=\tilde{g}\left( t(x,y) \right) := g\left(t(x,y) \right)/\overline{\lambda} \leq 1$. The following result shows that any choice of $\tilde{g}$ is not too dissimilar to $\min(1,t)$.

\begin{lemma} \label{lemma:gbound2}
If $g \leq \overline{\lambda}$, $g(1) = 1$ and $g$ is non-decreasing  balancing function then 
$$
\min(1,t) \leq g(t) \leq \overline{\lambda}\min(1,t).
$$
\end{lemma}

\begin{proof}
For the right-hand side inequality, for any balancing function $g(t) \leq \overline{\lambda} \iff g(s)/s \leq \overline{\lambda}$ where $s :=1/t$, meaning $g(s) \leq \overline{\lambda} s$. Since $t$ is arbitrary then so is $s$. Combined with the fact that $g(s) \leq \bar{\lambda}$, this implies the result. The left-hand side inequality follows directly from  \Cref{lem:gbound1}.
\end{proof}

Using \Cref{lemma:gbound2}, an equivalence between the spectral gaps of a LBMJP with bounded $g$ and Metropolis--Hastings algorithms is shown in the below proposition.

\begin{proposition}[Equivalence of spectral gaps between LBMJP and Metropolis--Hastings] \label{prop:spec_equiv}
    If $g(t) \leq \overline{\lambda}$ for some $\overline{\lambda} <\infty$ is non-decreasing balancing function, then the locally-balanced jump process with generator $Lf(x) = \int (f(y) - f(x))g(t)\gamma(x,dy)$ has a positive spectral gap $\gamma_L$ (as in \eqref{our.spectral}) if and only if the Metropolis--Hastings Markov chain with transition kernel $P(x,dy) = \min(1,t)\gamma(x,dy) + \left(\int (1-\min(1,t))\gamma(x,dy)\right)\delta_x(dy)$ does. The same equivalence holds for the spectral gap $\text{G}(P)$, defined in \eqref{general.spectral}.
\end{proposition}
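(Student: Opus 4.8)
The plan is to reduce both spectral-gap conditions to a direct comparison of Dirichlet forms, which differ only through the pointwise weights $g\circ t(x,y)$ and $\min(1,t(x,y))$, and then to invoke \Cref{lemma:gbound2} to sandwich one form between constant multiples of the other. First I would write each operator in a common ``difference'' form. For the LBMJP, \eqref{weak.generator.first} gives directly $-Lf(x) = \int_E (f(x)-f(y))\,g\circ t(x,y)\,\gamma(x,dy)$. For the Metropolis--Hastings chain, since $\gamma(x,\cdot)$ is a probability measure the rejection mass satisfies $1 - \int(1-\min(1,t))\gamma(x,dy) = \int\min(1,t)\gamma(x,dy)$, so a short computation gives $(I-P)f(x) = \int_E (f(x)-f(y))\min(1,t(x,y))\,\gamma(x,dy)$; the diagonal $\delta_x$ term contributes nothing because $f(x)-f(x)=0$. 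Hence $I-P$ has exactly the same structure as $-L$ with $g\circ t$ replaced by $\min(1,t)$.

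Next I would symmetrize both Dirichlet forms. Both $g\circ t(x,y)\,[\pi\otimes\gamma](dx,dy)$ and $\min(1,t(x,y))\,[\pi\otimes\gamma](dx,dy)$ are symmetric measures on $E\times E$, the first by \eqref{eq:symm_measure} (equivalently \Cref{prop:reversible_jumpkernel}) and the second because $\min(1,\cdot)$ is itself a balancing function. Symmetrizing then yields
\begin{align*}
\mathcal{E}(L,f) &= \tfrac12\int_{E\times E}(f(x)-f(y))^2\, g\circ t(x,y)\,\gamma(x,dy)\pi(dx),\\
\mathcal{E}(I-P,f) &= \tfrac12\int_{E\times E}(f(x)-f(y))^2\, \min(1,t(x,y))\,\gamma(x,dy)\pi(dx),
\end{align*}
where $\mathcal{E}(I-P,f):=\langle f,(I-P)f\rangle_\pi$. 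These representations are finite for all the relevant $f$: for $f\in\mathcal{B}(E)$ trivially, and for $f\in L^2(\pi)$ because $L$ is bounded on $L^2(\pi)$ by \Cref{prop:bounded.generator:1} and $P$ is a Markov operator (so $\|I-P\|\le 2$).

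The crucial step then applies \Cref{lemma:gbound2}, which gives $\min(1,t)\leq g(t)\leq \overline{\lambda}\min(1,t)$ pointwise. Multiplying by the nonnegative integrand $(f(x)-f(y))^2$ and integrating yields $\mathcal{E}(I-P,f)\leq\mathcal{E}(L,f)\leq\overline{\lambda}\,\mathcal{E}(I-P,f)$ for every such $f$. Dividing by the common $\text{Var}_\pi(f)$ and taking the infimum --- over $\mathcal{B}(E)$ for the $\gamma_L$ statement \eqref{our.spectral} and over $L^2(\pi)$ for the $\text{G}$ statement \eqref{general.spectral} --- preserves the sandwich, giving $\gamma_{I-P}\leq\gamma_L\leq\overline{\lambda}\,\gamma_{I-P}$ and likewise $\text{G}(P)\leq\text{G}(L)\leq\overline{\lambda}\,\text{G}(P)$, where the Metropolis--Hastings gaps are defined by the same formulas with $I-P$ in place of $L$. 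In particular the left-hand quantity is strictly positive if and only if the right-hand one is, which is exactly the claimed equivalence.

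I do not expect a serious obstacle here; the argument is essentially a pointwise domination passed through the Dirichlet forms. The only points requiring care are the bookkeeping that the Metropolis rejection term drops out of $\mathcal{E}(I-P,f)$, the verification that $\min(1,\cdot)$ inherits the symmetry needed to symmetrize its form, and matching the infimum classes ($\mathcal{B}(E)$ versus $L^2(\pi)$) on each side so that both the $\gamma_L$ and the $\text{G}(P)$ versions of the statement follow from the single pointwise comparison.
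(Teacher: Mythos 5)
Your proposal is correct and follows essentially the same route as the paper's proof: both reduce the claim to the pointwise sandwich $\min(1,t)\leq g(t)\leq \overline{\lambda}\min(1,t)$ from \Cref{lemma:gbound2}, pass it through the symmetrized Dirichlet forms, and conclude via the variational characterisation of the gap for both $\gamma_L$ and $\text{G}$. The additional bookkeeping you supply (the cancellation of the Metropolis rejection term and the symmetry of $\min(1,t)\,[\pi\otimes\gamma](dx,dy)$ as a balancing function) is just an explicit spelling-out of steps the paper leaves implicit.
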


\begin{proof}
Note by \Cref{lemma:gbound2} that for any $f \in L^2(\pi)$
$$
\begin{aligned}
\int (f(y) - f(x))^2 \min\left(1,t(x,y)\right)\pi(dx)\gamma(x,dy) 
&\geq 
\frac{1}{\overline{\lambda}}\int (f(y) - f(x))^2 g\left(t(x,y) \right)\pi(dx)\gamma(x,dy) 
\\
&\geq 
\frac{1}{\overline{\lambda}}\int (f(y) - f(x))^2 \min\left(1,t \right) \pi(dx) \gamma(x,dy).
\end{aligned}
$$
Writing $P(x,dy) = \min(1,t)\gamma(x,dy) + \left(\int (1-\min(1,t))\gamma(x,dy)\right) \delta_x(dy)$, then from the variational characterisation of the spectral gap it directly follows that
$$
\text{Gap}(L) \geq \text{Gap}(P) \geq \frac{\text{Gap}(L)}{\overline{\lambda}}, 
$$
and
$$
\gamma_L \geq \gamma_P \geq \frac{\gamma_L}{\overline{\lambda}}
$$
where $\gamma$ is defined in \eqref{our.spectral}.
\end{proof}

Given the above, we can deduce a stronger equivalence by translating between spectral gaps and exponential ergodicity, as stated below.

\begin{corollary}
The following hold:
\begin{enumerate}
    \item[(i)] Under Assumptions \ref{lambda.positivity:00}-\ref{ass.finite.lambda}, if the Metropolis-Hastings kernel $P$ with proposal $\gamma$, targeting $\pi$ is geometrically ergodic when initialised from $\mu$ with $\chi^2(\mu||\pi)<\infty$, then the LBMJP with the same $\gamma$, bounded $g$ and initialised from $\mu$ is exponentially ergodic.
    \item[(ii)] If two LBMJPs with bounded and non-decreasing $g_1$ and $g_2$ have the same $\gamma$, then either both have positive spectral gap or neither does.
\end{enumerate}
    
\end{corollary}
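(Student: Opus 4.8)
The plan is to chain together the two preceding propositions, exploiting that both parts are ultimately governed by the spectral gap of a single Metropolis--Hastings kernel $P$ with acceptance rate $\min(1,t)$, which depends only on $\gamma$ and $\pi$ and not on the balancing function. For part (i) I would establish the implications
\[
P \text{ geometrically ergodic} \;\Longrightarrow\; \text{G}(P)>0 \;\Longrightarrow\; \text{G}(L)>0 \;\Longrightarrow\; \gamma_L>0 \;\Longrightarrow\; \text{the LBMJP is exponentially ergodic}.
\]
First, since $P$ is $\pi$-reversible (it is of Metropolis--Hastings type), the well-known equivalence for reversible Markov chains between geometric ergodicity and the existence of a strictly positive $L^2(\pi)$-spectral gap yields $\text{G}(P)>0$. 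Next, \Cref{prop:spec_equiv} transfers positivity of the gap across to the jump process, giving $\text{G}(L)>0$. Because $\mathcal{B}(E)\subseteq L^2(\pi)$, the infimum defining $\gamma_L$ in \eqref{our.spectral} runs over a smaller class than that defining $\text{G}(L)$ in \eqref{general.spectral}, so $\gamma_L \geq \text{G}(L) > 0$. Finally, finiteness of $\chi^2(\mu\|\pi)$ forces $\mu\ll\pi$, so \Cref{prop:geometric_ergodicity} applies under Assumptions \ref{lambda.positivity:00}--\ref{ass.finite.lambda} and produces the exponential decay of $\|\mu P_t-\pi\|_{TV}$, which is exactly exponential ergodicity.

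For part (ii) the argument is a transitivity through the common kernel. Applying \Cref{prop:spec_equiv} separately to $g_1$ and to $g_2$ gives $\gamma_{L_1}>0 \iff \gamma_P>0 \iff \gamma_{L_2}>0$, where $P$ is the Metropolis--Hastings kernel whose acceptance rate $\min(1,t)$ involves neither $g_1$ nor $g_2$ (it is the same kernel in both applications). Hence the two processes have a positive spectral gap simultaneously, and neither otherwise. The identical chain of equivalences holds verbatim for $\text{G}(\cdot)$ in place of $\gamma$.

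The only genuinely nontrivial input is the first implication of part (i): passing from geometric ergodicity of $P$ to a strictly positive $L^2(\pi)$-spectral gap $\text{G}(P)$. This is where reversibility of the Metropolis--Hastings kernel is essential, and where one must be careful about which notion of geometric ergodicity is invoked — convergence from $\pi$-a.e.\ starting point versus from the specific initial law $\mu$ — so that the standard equivalence theorem genuinely applies and delivers a spectral gap rather than merely a rate from a single initialisation. Everything downstream is routine: the comparison $\gamma_L\geq\text{G}(L)$ is immediate from the inclusion $\mathcal{B}(E)\subseteq L^2(\pi)$, the transfer between $P$ and $L$ is exactly \Cref{prop:spec_equiv}, and the conclusion of exponential ergodicity is \Cref{prop:geometric_ergodicity}.
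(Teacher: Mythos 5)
Your proposal is correct and follows essentially the same route as the paper: the paper likewise passes from geometric ergodicity of the reversible Metropolis--Hastings kernel to $\mathrm{G}(P)>0$ (via Theorem 2.1 of \cite{roberts1997geometric}, your ``well-known equivalence''), transfers positivity through \Cref{prop:spec_equiv} to conclude $\gamma_L>0$, and invokes \Cref{prop:geometric_ergodicity}, with part (ii) handled by the same transitivity through the common $\min(1,t)$ kernel. Your explicit remark that $\gamma_L \geq \mathrm{G}(L)$ because $\mathcal{B}(E)\subseteq L^2(\pi)$ merely spells out a step the paper leaves implicit.
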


\begin{proof}
(i) Using Theorem 2.1 of \cite{roberts1997geometric} then if $P$ is geometrically ergodic and $\chi^2(\mu||\pi)<\infty$ then $\|P\|<1$, from which it follows that $\text{Gap}(P) >0$. By \Cref{prop:spec_equiv} we get that $ \text{Gap}(L)>0$, and therefore $\gamma_L>0$. The result follows by  \Cref{prop:geometric_ergodicity}. (ii) This is an immediate consequence of \Cref{prop:spec_equiv} since both locally balanced processes will have a positive spectral gap iff the Metropolis-Hastings algorithm with proposal $\gamma$ has a positive one.   %The argument is almost identical to the proof of \Cref{prop:spec_equiv} since by \Cref{lemma:gbound2} each of $g_1$ and $g_2$ can be upper and lower bounded by some positive multiple of the other. We omit the details.
\end{proof}

\begin{comment}
{\color{blue} Further work that could be added if we choose:}

Some remarks are necessary here:
\begin{itemize}
    \item This should mean that among bounded $g$ the choice $\min(1,t)$ is the best in terms of asymptotic variance, although one must normalise not to $g(1) = 1$ but instead to $\sup_t g(t) = 1$ for this, otherwise the jump rate $\lambda = \sup_t g(t)$ plays a role
    \item We can instead consider $\tilde{\lambda} = \sup_x Z(x)$, and it will sometimes be that $\tilde{\lambda} < \lambda$ depending on $\gamma$ and $\pi$. This may then mean that unbounded g can be considered as a `time-shift' of a bounded $g$ algorithm, by setting $Z(x) \leq \bar{\lambda}(x)$ and then considering mapping between $\bar{\lambda}(x)$ and a constant jump rate process.
\end{itemize}
\end{comment}

\subsubsection{Comparison theorems for unbounded $g$}
\label{subsubsec:compare}

\Cref{prop:geometric_ergodicity} above shows that a positive spectral gap combined with a suitably \emph{warm start} leads to exponential ergodicity in total variation distance. In the next result we show that among non-decreasing choices of $g$ there is a natural ordering on the gaps, which can then be used to greatly simplify the question of exponential ergodicity for a process when $g$ is not bounded.

\begin{proposition}[Comparison of spectral gaps with unbounded $g$]
    Consider two LBMJPs with the same kernel $\gamma$ and  with balancing functions $g_1$ and $g_2$ respectively. Let  $L_1$ and $L_2$ be the generators of the two processes respectively. Assume that $g_1(t) \geq \omega g_2(t)$ for all $t\geq 0$ and some $\omega > 0$. Then $\gamma_{L_1} \geq \omega \cdot \gamma_{L_2}$.  
\end{proposition}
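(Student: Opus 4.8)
The plan is to reduce the claim to a pointwise comparison of the associated Dirichlet forms. Since both processes share the same base kernel $\gamma$ and target $\pi$, the only difference between $\mathcal{E}(L_1,\cdot)$ and $\mathcal{E}(L_2,\cdot)$ is the weight $g_i\circ t(x,y)$ carried by the jump kernel, whereas the denominator $\text{Var}_\pi(f)$ in the variational characterisation \eqref{our.spectral} is identical for both. It therefore suffices to prove that $\mathcal{E}(L_1,f)\ge \omega\,\mathcal{E}(L_2,f)$ for every $f\in\mathcal{B}(E)$ and then pass to the infimum.

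First I would rewrite the Dirichlet form as a symmetric quadratic form. Recalling from \eqref{weak.generator.first} that $L_if(x)=\int_E g_i\circ t(x,y)(f(y)-f(x))\gamma(x,dy)$, and writing $\mu_i(dx,dy):=g_i\circ t(x,y)\,\pi(dx)\gamma(x,dy)$, the content of \Cref{prop:reversible_jumpkernel} is exactly that each $\mu_i$ is invariant under swapping its two arguments. Exploiting this symmetry in the standard way (expand $\langle f,-L_if\rangle_\pi$, relabel $x\leftrightarrow y$ in a second copy, and average the two expressions) gives
\begin{equation*}
\mathcal{E}(L_i,f)=\tfrac12\int_E\int_E \bigl(f(x)-f(y)\bigr)^2\, g_i\circ t(x,y)\,\pi(dx)\gamma(x,dy).
\end{equation*}
Before using this I would verify that the rearrangement is legitimate: for bounded $f$ the integrand is dominated by $4\|f\|_\infty^2\, g_i\circ t(x,y)$, whose double integral equals $4\|f\|_\infty^2\, Z_\lambda<\infty$ by \Cref{prop:well-posed}(b), so every integral in sight is finite and Fubini applies.

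With this representation the conclusion is immediate. The hypothesis $g_1(t)\ge\omega\, g_2(t)$ for all $t\ge 0$ yields the pointwise bound $g_1\circ t(x,y)\ge\omega\, g_2\circ t(x,y)$, so integrating against the nonnegative measure $\bigl(f(x)-f(y)\bigr)^2\pi(dx)\gamma(x,dy)$ gives $\mathcal{E}(L_1,f)\ge\omega\,\mathcal{E}(L_2,f)$. Dividing by $\text{Var}_\pi(f)>0$ and invoking the definition of $\gamma_{L_2}$ as an infimum, we obtain for every $f\in\mathcal{B}(E)$ that
\begin{equation*}
\frac{\mathcal{E}(L_1,f)}{\text{Var}_\pi(f)}\ge\omega\,\frac{\mathcal{E}(L_2,f)}{\text{Var}_\pi(f)}\ge\omega\,\gamma_{L_2}.
\end{equation*}
Taking the infimum over $f\in\mathcal{B}(E)$ on the left then produces $\gamma_{L_1}\ge\omega\,\gamma_{L_2}$. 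I expect no serious obstacle here; the only point requiring genuine care is the symmetrisation/finiteness step, which is why I would establish the domination bound explicitly before exchanging the order of integration and averaging.
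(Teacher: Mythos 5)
Your proof is correct and follows essentially the same route as the paper: both pass through the symmetric integral representation $\mathcal{E}(L_i,f)=\tfrac12\int (f(y)-f(x))^2\, g_i\circ t(x,y)\,\pi(dx)\gamma(x,dy)$ for $f\in\mathcal{B}(E)$, apply the pointwise bound $g_1\ge\omega g_2$, and take the infimum in the variational characterisation. Your explicit justification of the symmetrisation step, via reversibility of the jump kernel and the domination bound $4\|f\|_\infty^2 Z_\lambda<\infty$ from \Cref{prop:well-posed}(b), is a welcome addition that the paper leaves implicit.
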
 

\begin{proof}
     Note that $\mathcal{B}(E) \subset \mathcal{D}(L_1)\cap\mathcal{D}(L_2)$.  For any $f \in \mathcal{B}(E)$ recall the integral representation
    \begin{align}
    \mathcal{E}(L_1,f) 
    = 
    \frac{1}{2}\int (f(y)-f(x))^2g_1\circ t(x,y)\pi(dx)\gamma(x,dy),
    \end{align}
    from which it is straightforward to see that $\mathcal{E}(L_1,f) \geq \omega \mathcal{E}(L_2,f)$ using that $g_1(t) \geq \omega g_2(t)$ for all $t\geq 0$. 
\end{proof}

The above result, when combined with \Cref{lem:gbound1}, allows us to focus attention wholly on the process with (bounded) balancing function $g(t) = \min(1,t)$.  With this knowledge, it is also immediate that existing results on spectral gaps/geometric ergodicity for Metropolis--Hastings algorithms (e.g.\ \cite{roberts1996geometric,roberts1996exponential,livingstone2019geometric}) can be leveraged in their entirety to establish exponential convergence to equilibrium of a locally-balanced Markov jump process.

\begin{corollary}
    For a given $\gamma(x,\cdot)$, if the LBMJP with $g(t) = \min(1,t)$ has a positive spectral gap, then it will also have a positive spectral gap $\gamma_L$ for all non-decreasing choices of $g$.
\end{corollary}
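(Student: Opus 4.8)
The plan is to read off the statement as a one-line consequence of the comparison proposition immediately above, specialised to the choice $g_2(t) = \min(1,t)$. First I would recall from \Cref{lem:gbound1}(ii) that \emph{every} non-decreasing balancing function $g$ satisfies $g(t) \geq \min(1,t)$ for all $t > 0$. Since both sides vanish at $t = 0$ by the convention $g(0) := 0$ built into the definition of a balancing function, this inequality in fact holds for all $t \geq 0$. Setting $g_1 = g$ and $g_2(t) = \min(1,t)$, the hypothesis $g_1(t) \geq \omega\, g_2(t)$ of the comparison proposition is therefore satisfied with the constant $\omega = 1$.

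Applying that proposition then gives $\gamma_L \geq \gamma_{L_{\min}}$, where $L$ and $L_{\min}$ denote the weak generators associated with the balancing functions $g$ and $\min(1,t)$ respectively. By hypothesis the LBMJP with balancing function $\min(1,t)$ has a positive spectral gap, i.e.\ $\gamma_{L_{\min}} > 0$, so I conclude $\gamma_L \geq \gamma_{L_{\min}} > 0$ for every non-decreasing choice of $g$, which is exactly the claim.

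There is no genuine obstacle here: the whole argument is the combination of the preceding comparison proposition with the universal lower bound $g(t) \geq \min(1,t)$ supplied by \Cref{lem:gbound1}. The only point deserving a moment's care is checking that the comparison inequality holds at the boundary value $t = 0$, which follows immediately from the convention $g(0) = 0$, so that no separate treatment of degenerate $t$ is required and the constant $\omega = 1$ is legitimate throughout.
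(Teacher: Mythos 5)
Your proof is correct and takes essentially the same route as the paper: the corollary there is stated as an immediate consequence of the preceding comparison proposition combined with the lower bound $g(t) \geq \min(1,t)$ from \Cref{lem:gbound1}(ii), applied with $\omega = 1$. Your explicit check of the boundary case $t=0$ via the convention $g(0)=0$ is a small point the paper leaves implicit, and it is handled correctly.
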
 

\begin{corollary}
    For a given $\gamma(x,dy)$, if a Metropolis--Hastings algorithm with proposal $\gamma(x,dy)$ has a positive spectral gap, then a LBMJP with the same invariant distribution and any choice of non-decreasing $g$ will also have a positive spectral gap $\gamma_L$.
\end{corollary}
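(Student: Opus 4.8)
The plan is to obtain this as a direct concatenation of two results already established, routing everything through the process with $g(t)=\min(1,t)$. First I would identify the object: the Metropolis--Hastings algorithm with proposal $\gamma$ and target $\pi$ is precisely the $\pi$-reversible Markov chain with transition kernel
\[
P(x,dy) = \min(1,t)\gamma(x,dy) + \left( \int (1-\min(1,t))\gamma(x,dy) \right)\delta_x(dy),
\]
where $t=t(x,y)$ is the Radon--Nikodym derivative of \eqref{eq:radon_nikodym}, since $\min(1,t)$ is exactly the standard Metropolis acceptance ratio. This is the very kernel appearing in \Cref{prop:spec_equiv}. Because $\min(1,t)$ is a non-decreasing balancing function bounded above by $\overline{\lambda}=1$, I would apply \Cref{prop:spec_equiv} with this choice of $g$ and $\overline{\lambda}=1$, concluding that $P$ has a positive spectral gap if and only if the LBMJP with $g(t)=\min(1,t)$ has a positive spectral gap $\gamma_L$.

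Second, I would feed this into the preceding corollary, namely the one asserting that a positive gap for the $g(t)=\min(1,t)$ process transfers to every non-decreasing choice of $g$. Concretely, by \Cref{lem:gbound1} every non-decreasing balancing function satisfies $g(t)\geq\min(1,t)$ for all $t\geq 0$, so the comparison proposition with $\omega=1$ gives $\gamma_{L_g}\geq\gamma_{L_{\min(1,t)}}$. Chaining the two implications yields the claim: if the Metropolis--Hastings chain has a positive spectral gap, then the $g(t)=\min(1,t)$ process does, and hence so does the LBMJP for any non-decreasing $g$.

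The argument is essentially bookkeeping, so I anticipate no genuine technical obstacle; the only points demanding care are notational. I would fix the notion of ``spectral gap'' used for the discrete-time kernel $P$ to be the variational quantity $\text{G}(P)=\inf_{f}\langle f,(I-P)f\rangle_\pi/\text{Var}_\pi(f)$, matching \eqref{general.spectral} with $I-P$ in place of $-L$, since it is precisely this quantity for which \Cref{prop:spec_equiv} transfers positivity to the LBMJP. I would also record the trivial verification that $\min(1,t)$ is a non-decreasing balancing function bounded by $1$, so that \Cref{prop:spec_equiv} genuinely applies with $\overline{\lambda}=1$; this is immediate from $\min(1,1)=1$ and the identity $\min(1,t)=t\min(1,1/t)$.
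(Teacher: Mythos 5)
Your proposal is correct and follows exactly the route the paper intends: the corollary is stated as an immediate consequence of \Cref{prop:spec_equiv} (applied with $g(t)=\min(1,t)$ and $\overline{\lambda}=1$) chained with the comparison proposition via the lower bound $g(t)\geq\min(1,t)$ from \Cref{lem:gbound1}. Your additional bookkeeping — fixing the discrete-time gap $\text{G}(P)$ and verifying that $\min(1,t)$ is a non-decreasing balancing function bounded by $1$ — is accurate and consistent with the paper's conventions.
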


\subsection{Uniform Ergodicity}\label{subsec:unif_erg}

The goal of this sub-section is to show that a locally-balanced process can be uniformly ergodic, even on unbounded state spaces, which suggests that their mixing properties can be robust with respect to the starting position. We begin by recalling the following definition.

\begin{definition}[Uniform Ergodicity]\label{unif.ergod.definition:1}
The process $\left(Y_t \right)_{t \geq 0}$ is called uniformly ergodic if there exists $K>0$ and $\rho <1$ such that for all $x \in \textup{supp}(\pi)$
\begin{equation*}
    \| \mathbb{P}_x\left( Y_t \in \cdot \right) - \pi(\cdot) \|_{TV} \leq K \rho^t.
\end{equation*}
\end{definition}

Two important notions for studying ergodicity properties of the process are petite and small sets. We recall (see e.g.\ \cite{meyn.tweedie:09}) that for a discrete time Markov chain $\left( X_n \right)_{n \in \mathbb{N}}$, a set $C$ is called \emph{petite} if there exists a probability measure $\alpha$ on $\mathbb{N}$, a non-trivial measure $\eta$ on $E$ and $\epsilon>0$ such that for all $x \in C$,
    \begin{equation*}
        \int \mathbb{P}_x\left( X_k \in \cdot  \right) \alpha(dk) \geq \epsilon \  \eta(\cdot).
    \end{equation*}
    The set $C$ is called \emph{small} if $\alpha$ can be taken as the Dirac measure $\alpha=\delta_n$ for some $n \in \mathbb{N}$.
    These definitions naturally extend to continuous time processes.

We begin with a result that guarantees that the class of small sets contains all the compact subsets of $E$. This will be helpful later when we establish uniform ergodicity for particular examples.

\begin{proposition}[Compact sets are small]\label{compact.small:0}
    Let Assumptions \ref{lambda.positivity:00}-\ref{regularity.ass:1} hold. Let $\delta >0$ and consider the $\delta$-skeleton of the LBMJP, i.e. the chain $\left( Z_n \right)_{n \in \mathbb{N}}$ with $Z_n=Y_{n \delta}$. There exists a $\delta > 0$ such that for the $\delta$-skeleton all compact sets are small.
\end{proposition}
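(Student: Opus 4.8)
The plan is to prove the stronger one-step minorization: for every compact $C \subset E$ I will produce $\delta, \epsilon > 0$ and a non-trivial measure $\eta$ on $E$ with $\mathbb{P}_x(Y_\delta \in A) \geq \epsilon\,\eta(A)$ for all $x \in C$ and $A \in \mathcal{E}$. Since this is exactly the definition of a small set with $\alpha = \delta_1$ applied to the $\delta$-skeleton $(Z_n) = (Y_{n\delta})$, it immediately yields the claim (and, conveniently, sidesteps any need for aperiodicity, because the minorization already holds after a single skeleton step). The first step is to discard all sample paths except those making exactly one jump in $[0,\delta]$. Using the construction of Definition \ref{def:LBMJP} and writing the jump kernel as $J(x,dy) = g\circ t(x,y)\gamma(x,dy)$, decomposing on the number of jumps before time $\delta$ gives the lower bound
\begin{equation*}
\mathbb{P}_x(Y_\delta \in A) \;\geq\; \int_A \left( \int_0^\delta e^{-\lambda(x)s}\,e^{-\lambda(y)(\delta - s)}\,ds\right) g\circ t(x,y)\,\gamma(x,dy),
\end{equation*}
since on the one-jump event the process waits an $\mathrm{Exp}(\lambda(x))$ time $s$, jumps to $dy$ according to $J(x,\cdot)$, and then survives without jumping for the remaining time $\delta - s$.

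Next I would control the two survival factors and the balancing weight uniformly on a compact destination set. Fix a compact $D \subset E$ together with a reference measure $\mu$ with $\mu(D) > 0$ (for a random-walk kernel, $\mu$ is Lebesgue measure and $D$ a ball). By Lemma \ref{lemma.continuous.rate:0} the rate $\lambda$ is continuous, hence bounded by some $\Lambda < \infty$ on the compact set $C \cup D$; therefore for $x \in C$ and $y \in D$ the inner time integral is at least $\delta e^{-2\Lambda \delta} =: c_\delta > 0$. Moreover, by Assumption \ref{regularity.ass:1}.2 the map $t$ is continuous, and combined with Assumption \ref{ass:g_non_decr} the composition $g \circ t$ is continuous and (for $D$ chosen within the relevant support) strictly positive, hence attains a minimum $g_{\min} > 0$ on the compact set $C \times D$. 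Restricting the destination to $A \cap D$ then gives, for every $x \in C$,
\begin{equation*}
\mathbb{P}_x(Y_\delta \in A) \;\geq\; c_\delta\, g_{\min}\,\gamma(x, A \cap D).
\end{equation*}

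The main obstacle — and the crux of the argument — is the final step: exhibiting a single non-trivial measure $\eta$ that minorizes the whole family $\{\gamma(x, \,\cdot\, \cap D)\}_{x \in C}$ uniformly in $x$, since the survival-time and balancing-weight estimates above are routine consequences of continuity on compacts. Here I would invoke Assumption \ref{regularity.ass:1}.1, writing $\gamma(x, \cdot) = \nu(\{u : h(x,u) \in \cdot\})$ with $h(\cdot,u)$ continuous. When $\pi$ and $\gamma$ admit continuous densities with respect to a common reference measure $\mu$ — the setting highlighted after Assumption \ref{regularity.ass:1} — the proposal density $q(x,y)$ is jointly continuous and strictly positive on $C \times D$, hence bounded below by some $q_{\min} > 0$, so that $\gamma(x, A \cap D) \geq q_{\min}\,\mu(A \cap D)$. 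Setting $\eta(\cdot) := \mu(\,\cdot\, \cap D)$ and $\epsilon := c_\delta\, g_{\min}\, q_{\min}$ then completes the proof. In the general density-free case one cannot minorize $\gamma$ pointwise in this way (point-mass proposals give a counterexample), and I would instead combine the continuity of $h$ with the weak Feller property of Proposition \ref{Feller.proposition:1} and the irreducibility hypothesis (Assumption \ref{phi.irr.assumption:00}) to realise the $\delta$-skeleton as a $T$-chain with an everywhere-positive continuous component, for which $\psi$-irreducibility forces compact sets to be petite, and then small. I expect essentially all the genuine work to sit in this uniform minorization, the rest being bookkeeping.
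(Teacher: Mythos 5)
There is a genuine gap, and it sits in the branch of your argument that would actually have to carry the proof. Your primary one-step minorization is internally consistent as far as it goes (the cancellation of $\lambda(x)$ between the $\mathrm{Exp}(\lambda(x))$ density and $\Gamma_g = J/\lambda$ is handled correctly, and the bound $\delta e^{-2\Lambda\delta}$ on the time integral is fine), but it proves a different proposition: it requires $\gamma(x,\cdot)$ to admit a jointly continuous, strictly positive density on $C\times D$ and $t(x,y)>0$ there, none of which follow from Assumptions \ref{lambda.positivity:00}--\ref{regularity.ass:1}. Assumption \ref{regularity.ass:1}.1 only gives the push-forward representation $\gamma(x,\cdot)=\nu(\{u : h(x,u)\in\cdot\})$, which accommodates purely atomic kernels. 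You concede this yourself ("point-mass proposals give a counterexample"), but the concession is fatal rather than cosmetic: the paper invokes \Cref{compact.small:0} in the proof of \Cref{theorem.uniform.ergo:00} for exactly such a kernel, namely the nearest-neighbour random walk \eqref{srw.kernel:1} on $\mathbb{N}$. So the density route cannot be the proof of the statement as given, and everything rests on your fallback sentence.

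That fallback is, in outline, the paper's actual proof — the skeleton chain is $\pi$-irreducible, weak Feller by \Cref{Feller.proposition:1}, and Theorem 3.4 of \cite{meyn.tweedie:93:1} (Feller chain whose irreducibility measure has support with non-empty interior, supplied here by separability of $E$ and an open ball of positive $\pi$-measure) yields that compact sets are petite. But your sketch then asserts "petite, and then small" with no justification, and earlier you explicitly claimed to have "sidestepped any need for aperiodicity." That sidestep applied only to the abandoned one-step route: upgrading petite to small (Theorem 5.5.7 of \cite{meyn.tweedie:09}) requires the chain to be irreducible \emph{and aperiodic}, so aperiodicity is not optional in the argument you are left with. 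The repair is cheap and is precisely how the paper closes: by \Cref{prop:well-posed} there is $x$ with $\lambda(x)<\infty$, whence $\mathbb{P}_x(Y_\delta = x) \geq e^{-\lambda(x)\delta} > 0$, giving strong aperiodicity of the skeleton. You should also record, rather than assume, the skeleton's $\pi$-irreducibility (the paper deduces it from $0<\lambda(x)<\infty$ and $\pi$-irreducibility of $\Gamma_g$) and the non-empty-interior condition on the support, since both are hypotheses of the Meyn--Tweedie theorem you are invoking; as written, the one sentence carrying the general case leaves all three verifications implicit.
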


\begin{proof}[Proof of \Cref{compact.small:0}]
    Let $\delta>0$ and for $n \in \mathbb{N}$ let us write $Q^{\delta}_{n}$ for the $n$-step transition kernel of the $\delta$-skeleton $Z$. Since $\Gamma_g$, is $\pi$-irreducible and $0< \lambda(x)<\infty$, it is easy to see that the LBMJP is also $\pi$-irreducibe. Therefore, the skeleton chain is $\pi$-irreducible, and since $E$ is separable, there exists an open ball $B$ with $\pi(B)>0$. By  \Cref{prop:well-posed}.1, there exists $x \in E$ with $\lambda(x)<\infty$, therefore $Q^{\delta}_{1}\left( x, \left\{ x  \right\} \right)= \mathbb{P}_x\left( Y_{\delta}=x \right)>0$, so the chain is (strongly) aperiodic in the sense of \cite{meyn.tweedie:09}. From \Cref{Feller.proposition:1} the chain is weak Feller. All Assumptions of Theorem 3.4 of \cite{meyn.tweedie:93:1} are satisfied, therefore all compact sets are petite for the skeleton chain. Finally, any such petite set is small as from Theorem 5.5.7 of \cite{meyn.tweedie:09}.
\end{proof}
    
 For any set $C$, we define $h_C:= \inf\{t \geq 0 | Y_t \in C \}$ to be the hitting time to set $C$. It is well known that the notion of uniform ergodicity is closely connected with the behaviour of hitting times of small sets of the process. For the LBMJP we have the following result.

\begin{proposition}[Uniform ergodicity of a LBMJP] \label{prop:finite_hit}
Let \Cref{ass.finite.lambda} hold. Assume further that for any petite set $C$ there exist $M, \lambda^{\ast}>0$ such that for any $x \in E$,  $\mathbb{E}_x[h_C] < M$, and for any $y \in C$, $\lambda(y) \leq \lambda^*$. Then the LBMJP $Y$ is uniformly ergodic.
\end{proposition}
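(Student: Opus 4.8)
The plan is to verify a bounded Foster--Lyapunov drift condition and then feed it into the standard continuous-time Foster--Lyapunov criterion for uniform ergodicity. First I would fix a single convenient petite set $C$: under the standing assumptions of this section, \Cref{compact.small:0} guarantees that compact sets are small, hence petite, for a suitable skeleton, so I take $C$ to be a compact set with $\pi(C)>0$. By hypothesis we then have $\sup_{x\in E}\mathbb{E}_x[h_C]<M$ and $\lambda(y)\le\lambda^*$ for all $y\in C$.

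Next I would build a \emph{bounded} Lyapunov function from the hitting time itself. Set $w(x):=\mathbb{E}_x[h_C]$ and $V:=1+w$, so that $1\le V\le 1+M$; crucially $V\in\mathcal{B}(E)\subset\mathcal{D}(L)$ by \Cref{weak.generator.domain:0}, which removes the domain difficulties that affect unbounded drift functions (such as the stopping argument needed in the geometric-ergodicity setting). A first-jump (renewal) decomposition then identifies the drift: for $x\notin C$ the holding time is $\mathrm{Exp}(\lambda(x))$ and is independent of the post-jump location, while $w\equiv 0$ on $C$, so conditioning on the first jump gives $w(x)=\lambda(x)^{-1}+\int_E w(y)\,\Gamma_g(x,dy)$. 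Rearranging and using \eqref{weak.generator.first} yields $Lw(x)=\lambda(x)\big(\Gamma_g w(x)-w(x)\big)=-1$, hence $LV(x)=-1$ for every $x\notin C$. On $C$ we have $V\equiv 1$ and, since $\lambda(y)\le\lambda^*$ there, $|LV(y)|\le 2\lambda^*\|V\|_\infty\le 2\lambda^*(1+M)$. Taking $c:=(1+M)^{-1}$ and $d:=2\lambda^*(1+M)+c$, and noting that $-1\le -cV(x)$ whenever $V(x)\le 1+M$, we obtain the bounded drift inequality $LV\le -cV+d\,\mathbb{I}_C$ on all of $E$.

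With a bounded $V\ge 1$, a petite set $C$, together with $\phi$-irreducibility (\Cref{phi.irr.assumption:00}) and aperiodicity (as established inside the proof of \Cref{compact.small:0}), I would invoke the continuous-time Foster--Lyapunov theorem (e.g.\ \cite{meyn.tweedie:3}) to conclude $V$-norm geometric ergodicity. Because $V$ is bounded above, the $V$-norm dominates and is dominated by the total-variation norm up to the constant $\|V\|_\infty$, so the resulting bound is uniform in $x\in\textup{supp}(\pi)$ and is exactly the statement of \Cref{unif.ergod.definition:1}.

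The main obstacle is making the renewal identity $Lw=-1$ on $C^c$ rigorous and reconciling it with the weak generator of \Cref{weak.generator.domain:0}. Concretely I must check that $w$ is measurable and finite (immediate from $w<M$), that the first-jump decomposition is justified by the strong Markov property and the independence of the holding time from the jump target, and that the process a.s.\ enters $C$ in finite time (guaranteed by $\mathbb{E}_x[h_C]<\infty$). A secondary, routine point is confirming that the compact set supplied by \Cref{compact.small:0} is petite for the continuous-time process rather than only for its skeleton, and that the ``for any petite set'' hypothesis may be specialised to this particular $C$.
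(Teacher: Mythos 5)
Your proposal is correct in outline but takes a genuinely different route from the paper. The paper never constructs a Lyapunov function from hitting times: instead it argues directly that the \emph{whole space} $E$ is petite. By Markov's inequality, $\mathbb{P}_x(h_C < t) \geq 1 - M/t$, and once the process hits $C$ at some $y$ it simply holds in place until time $t$ with probability at least $e^{-\lambda(y)t} \geq e^{-\lambda^* t}$, giving the minorization $P_t(x,C) \geq e^{-\lambda^* t}(1 - M/t)$ uniformly in $x$; composing with the petite-set minorization for $C$ makes $E$ petite, after which Theorem 5.2(b) of \cite{down1995exponential} is applied with the \emph{trivial} drift $V_T \equiv 1$. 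Your route instead feeds a genuine drift condition into the same machinery, via the bounded function $V = 1 + \mathbb{E}_\cdot[h_C]$ and the renewal identity $Lw = -1$ off $C$ (your drift computation, including the constants $c = (1+M)^{-1}$ and $d = 2\lambda^*(1+M)+c$ using $V \equiv 1$ on $C$, checks out, and positivity of $\lambda$ off $C$ is automatic since $\lambda(x)=0$ would force $h_C = \infty$). What the paper's approach buys is robustness: it works entirely with hitting and holding \emph{probabilities}, so it needs no strong Markov property at jump times, no measurability argument for $w$, and no generator identities. Your approach is the more standard template and yields explicit drift constants, but it carries exactly the technical burden you flag, plus one you understate: \Cref{weak.generator.domain:0} guarantees the martingale property of \eqref{martingale:001} only for starting states in $\textup{supp}(\pi) \cap S$ with $S$ of full $\pi$-measure (via \Cref{lem:finite.expected.jumps:0}), whereas uniform ergodicity in the sense of \Cref{unif.ergod.definition:1} requires a bound from \emph{every} $x \in \textup{supp}(\pi)$; you would need to extend the Dynkin/martingale argument to all such $x$ (e.g.\ via \Cref{stability.theorem.final:00}), a step the paper's probabilistic argument sidesteps entirely. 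Finally, note that both your proof and the paper's lean on ingredients beyond the stated hypothesis \Cref{ass.finite.lambda} — you invoke \Cref{compact.small:0} to produce a petite set, and the paper invokes the aperiodicity established inside the proof of \Cref{compact.small:0} — so your reliance on the section's standing assumptions is comparable to the paper's own.
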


\begin{proof}
Using Markov's inequality
$\mathbb{E}_x[h_C] < M$ implies that $\mathbb{P}_x(h_C < t) \geq 1-M/t$.  Choose $t$ s.t. $M/t < 1$. Then, writing $f^x_C$ for the law of $h_C$ when the process starts from $x$, it holds that
$$
P_t(x,C) 
\geq \int_0^t \mathbb{P}_x(X_t \in C|h_C = s) f^x_C(d s).
$$
By the definition of $h_C$, $h_C = s$ implies that $X_{\tilde{s}} \in C$ for some $\tilde{s} \in (s,t)$. Therefore $\mathbb{P}_x(X_t \in C|h_C = s) \geq e^{-\lambda(X_{\tilde{s}})(t-\tilde{s})} \geq e^{-\lambda^*t}$, which implies that
\begin{equation}
P_t(x,C) 
\geq 
e^{-\lambda^*t}\int_0^t f^x_C(ds)
=
e^{-\lambda^*t}\mathbb{P}_x(h_C < t)
\geq e^{-\lambda^*t}\left(1-\frac{M}{t} \right).
\end{equation}

Since $C $ is petite there exist $\epsilon_0>0$, a probability distribution $\eta$ defined on $E$ and another $\alpha$ defined on $[0,\infty)$ such that 
$\int  P_s(y,A) \alpha(ds) \geq \epsilon_0 \eta (A)$ for all $A\in \mathcal{E}$ and $y \in C$.
Thus for any $x \in E$
\begin{align*}
\int  P_{t+s}(x,A) \alpha(ds) & \geq \int_C \int P_s(y,A) \alpha(ds)  P_t(x,dy)
\geq 
\epsilon_0 \eta (A)P_t(x,C)
\geq
\epsilon\eta (A)\,,
\end{align*}
for $\epsilon=\epsilon_0 e^{-\lambda^*t}(1-M/t)>0$. This means that $E$ is petite.  The process is strongly aperiodic as shown in the proof of  \Cref{compact.small:0}.  Uniform ergodicity of $Y$ follows from applying Theorem 5.2(b) of \cite{down1995exponential}, upon noting that the drift condition $P_TV_T \leq \beta(s)V_T + b\mathbb{I}_E$ for $s \leq T$ with $\beta(s)$ bounded on $[0,T]$ and $\beta(T) <1$ is trivially satisfied by setting $V_T \equiv 1$, $\beta(s) = 0$ for all $s \in [0,T]$ and $b = 1$.
\end{proof}

\subsubsection{Example 1: Simple random walk}

To illustrate how uniform ergodicity can arise for a locally-balanced Markov process when $E$ is unbounded we consider the case $E=\mathbb{N}$, and the locally-balanced process with base kernel 
\begin{equation}\label{srw.kernel:1}
\gamma(x,dy) := \frac{1}{2}(\delta_{x-1}(dy) + \delta_{x+1}(dy)).
\end{equation}

Let us also make the following assumption regarding the tails of $\pi$ and the growth of the function $g$.

\begin{assumption}\label{ass.unif.ergo:1}
   There exist $\tilde{a}, a>0$, $\beta>1$, and $k>0$ such that for all $t \geq 1$, 
   $$
   g(t) \geq t^{\tilde{a}},
   $$ 
and for all $n \in \mathbb{N}$, $\pi(n)>0$, and  for all $n \geq k$,
    $$
    \frac{\pi(n)}{\pi(n+1)} \geq \exp\left\{ a \beta n^{\beta-1} \right\}.
    $$
\end{assumption}

Under this assumption we have the following.

\begin{theorem}\label{theorem.uniform.ergo:00}
    Assume that $\gamma$ is of the form \eqref{srw.kernel:1} and Assumption  \ref{ass.unif.ergo:1} holds. Then the LBMJP is uniformly ergodic.
\end{theorem}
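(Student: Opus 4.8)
The plan is to reduce everything to Proposition \ref{prop:finite_hit}: I will exhibit a finite set $C=\{0,1,\dots,N\}$ (for suitably large $N$), which is compact and hence small for a skeleton chain by Proposition \ref{compact.small:0}, and then verify the two hypotheses of that proposition. The bound $\lambda(y)\le\lambda^{\ast}$ on $C$ is immediate, since each state has finitely many neighbours and $g\circ t$ is finite, so $\lambda^{\ast}:=\max_{y\in C}\lambda(y)<\infty$. All the substance therefore lies in the uniform hitting-time bound $\sup_x\mathbb{E}_x[h_C]<\infty$. (Along the way I would note that the standing Assumptions \ref{lambda.positivity:00}--\ref{regularity.ass:1} needed for Proposition \ref{compact.small:0} hold in this discrete random-walk setting, continuity being automatic on $\mathbb{N}$, and that the boundary at the smallest state is harmless because out-of-support proposals carry zero jump rate.)

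First I would record the jump rates. Since $\gamma$ is symmetric on $\mathbb{N}$, one has $t(x,x\pm1)=\pi(x\pm1)/\pi(x)$, so writing $g_-(x):=g(\pi(x-1)/\pi(x))$ and $g_+(x):=g(\pi(x+1)/\pi(x))$ the generator \eqref{weak.generator.first} acts as
$$
Lf(x)=\tfrac12 g_-(x)\big(f(x-1)-f(x)\big)+\tfrac12 g_+(x)\big(f(x+1)-f(x)\big).
$$
Assumption \ref{ass.unif.ergo:1} gives, for $x\ge k+1$, the downward blow-up $g_-(x)\ge (\pi(x-1)/\pi(x))^{\tilde a}\ge \exp\{\tilde a\,a\,\beta (x-1)^{\beta-1}\}=:R(x)\to\infty$, while $\pi(x+1)/\pi(x)<1$ together with Lemma \ref{lem:gbound1}(ii) gives the upward bound $g_+(x)\le\max(1,\pi(x+1)/\pi(x))=1$.

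The key step is a \emph{bounded} Lyapunov function with $LV\le-1$ off $C$. Choose $N\ge k$ large enough that $R(x+1)\ge2$ for all $x\ge N$, set weights $w_j:=4/g_-(j+1)$ for $j\ge N$, and define $V(x):=\sum_{j=N}^{x-1}w_j$ (so $V\equiv0$ on $C$). Then for $x>N$,
$$
LV(x)=\tfrac12\big(-g_-(x)w_{x-1}+g_+(x)w_x\big)=\tfrac12\big(-4+g_+(x)w_x\big)\le\tfrac12(-4+2)=-1,
$$
using $g_-(x)w_{x-1}=4$ and $g_+(x)w_x\le w_x=4/g_-(x+1)\le4/R(x+1)\le2$. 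Moreover $V$ is bounded, since $\sup_x V=\sum_{j\ge N}w_j\le4\sum_{j\ge N}\exp\{-\tilde a\,a\,\beta\,j^{\beta-1}\}=:M<\infty$, the series converging because $\beta>1$. As $V\in\mathcal{B}(E)$, Theorem \ref{weak.generator.domain:0} makes $M_t=V(Y_t)-V(Y_0)-\int_0^tLV(Y_s)\,ds$ a martingale; optional stopping at $h_C\wedge t$, together with $V\ge0$ and $LV\le-1$ on $C^c$, yields $\mathbb{E}_x[h_C\wedge t]\le V(x)\le M$, and letting $t\to\infty$ gives $\sup_x\mathbb{E}_x[h_C]\le M$. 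Proposition \ref{prop:finite_hit} then delivers uniform ergodicity.

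The main obstacle is precisely this Lyapunov construction: the point is that uniform control comes not from any drift of the embedded chain but from the super-exponential \emph{rate} $g_-$ at which the process descends, so one must choose summable weights $w_j\propto1/g_-(j+1)$ that keep $V$ bounded while still producing the $-1$ drift. Once that balance is found, the hitting-time bound and the remaining hypotheses of Proposition \ref{prop:finite_hit} follow routinely.
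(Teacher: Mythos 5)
Your proof is correct, but it takes a genuinely different route from the paper. The paper exploits the birth--death structure of the nearest-neighbour chain head-on: it derives an exact recursion for expected hitting times (\Cref{lemma.hitt.from.N:1}), controls the one-step quantities $\mathbb{E}_{n+1}[h_n]$ by coupling with a homogeneous random walk with rate $\lambda$ and drift $p<1/2$, and shows under the intermediate \Cref{ass.unif.ergo:0} that $\limsup_N \mathbb{E}_N[h_C]<\infty$ if and only if $\sum_n a_n<\infty$, where $a_n = 2/g\bigl(\pi(n-1)/\pi(n)\bigr)$ (\Cref{lemma.ass:1}); \Cref{ass.unif.ergo:1} is then verified to imply both conditions, and the conclusion follows from \Cref{compact.small:0} and \Cref{prop:finite_hit} exactly as in your reduction. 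You instead build a \emph{bounded} Lyapunov function $V(x)=\sum_{j=N}^{x-1}w_j$ with weights $w_j=4/g_-(j+1)$ --- note these are precisely $2a_{j+1}$, so the same series $\sum_n 1/g_-(n)$ drives both arguments --- and obtain the uniform hitting-time bound via the martingale of \Cref{weak.generator.domain:0} and optional stopping; the boundedness of $V$ is the key device, since it places $V$ in $\mathcal{B}(E)$ and thereby sidesteps the domain issues that unbounded Lyapunov functions would raise for the weak generator. Your drift computation is sound (the telescoping gives $g_-(x)w_{x-1}=4$ exactly, $g_+(x)\le 1$ by \Cref{lem:gbound1}(ii) since $\pi(x+1)/\pi(x)<1$, and $w_x\le 4/R(x+1)\le 2$ for $N$ large, with summability of the $w_j$ from $\beta>1$), and your handling of the boundary state and of \Cref{compact.small:0} in the discrete setting matches the paper's. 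What each approach buys: the paper's explicit recursion yields two-sided information --- \Cref{non.uniform.ergo.result:1} shows $\sum_n a_n=\infty$ forces hitting times from infinity to blow up, so \Cref{lemma.ass:1} is an equivalence --- whereas your argument gives only sufficiency; on the other hand, your construction is shorter, avoids the coupling entirely, and does not rely on exact birth--death hitting-time formulas, so it would extend more readily to base kernels $\gamma$ without nearest-neighbour structure. One cosmetic caveat, which applies equally to the paper's own proof: \Cref{prop:finite_hit} as literally stated quantifies over all petite sets, while both you and the paper verify its hypotheses only for the compact set in hand; this is harmless because the proof of \Cref{prop:finite_hit} uses a single such set.
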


\begin{proof}[Proof of \Cref{theorem.uniform.ergo:00}]
 The proof is presented in \ref{theorem.uniform.ergo:000}.
\end{proof}

As will become evident by the proof of \Cref{theorem.uniform.ergo:00}, the same result holds when the state space is $\left\{ h  n , n \in \mathbb{N} \right\}$ for some $h > 0$ and when $\gamma(x,d y)=2^{-1}\left( \delta_{x-h} + \delta_{x+h} \right)$. Consider a distribution of interest $\pi$ on $\mathbb{R}$ of the form  $\pi(x) \propto \exp\{ -x^{a} \}$
for $a \in (1,2)$. As we will see in \Cref{convergence.diffusion:000}, when $h \rightarrow 0$, after an appropriate rescale of time, the LBMJP converges weakly to an overdamped Langevin diffusion $(S_t)_{t \geq 0}$, with state space $\mathbb{R}$, that solves the stochastic differential equation  
\begin{equation}\label{langevin.diffusion:0001}
    d S_t =\frac{1}{2}\nabla \log\pi(S_t) dt + d B_t
\end{equation}
where $B$ is a Brownian motion. 
We observe that $\pi$ will satisfy \Cref{ass.unif.ergo:1}. Therefore, the LBMJP defined on $\left\{ h n , n \in \mathbb{N} \right\}$ will be uniformly ergodic for a target distribution of the form $\pi(n)\propto \exp\{ -n^{a} \}$
for any $a >1$. On the other hand, the Langevin diffusion \eqref{langevin.diffusion:0001} will not be uniformly ergodic for a target distribution of the form $\pi(x) \propto \exp\{ -|x|^{a} \}$ defined on $\mathbb{R}$ unless $a > 2$ (e.g.\ \cite{roberts1996exponential,sandric2025note}), highlighting a qualitative difference in the mixing behaviour between the processes.

\section{Weak Convergence to an overdamped Langevin diffusion}\label{convergence.diffusion:000}

In this section we focus on the state space is $E=\mathbb{R}^d$ and study the behaviour of the process when the size of the jumps decreases and the frequency increases arbitrarily. 
We will show that with an appropriate space and time rescaling, in the limit a locally-balanced Markov jump process converges weakly to the overdamped Langevin diffusion. Scaling limits of this form have been shown for various Metropolis-Hastings algorithms and have been used to analyse their behaviour in high dimensions and optimally tune various algorithmic parameters (e.g.\ \cite{gelman.gilks.roberts:97}). In this section we make the following additional assumption.

\begin{assumption}\label{ass.scaling.limit:1}
Assume that $\pi$ admits a Lebesgue density $\pi \in C^3(\mathbb{R}^d)$, that $\tilde{M} \geq \pi(x)>0$ for some $\tilde{M} < \infty$ and all $x \in \mathbb{R}^d$, that $g \in C^2(\mathbb{R}_{\geq 0})$, and that there exists $ M>0$ such that for all $x \in \mathbb{R}^d$,
\begin{equation}\label{weak.conv.m.smoothness}
-MI_d \preceq \nabla^2 \log \pi(x) \preceq MI_d.
\end{equation}
\end{assumption}

\begin{remark}
Condition \eqref{weak.conv.m.smoothness} is usually called $M$-smoothness of the potential $-\log\pi(x)$, and is common in numerical analysis (e.g.\ \cite{lytras.metrikopoulos:24}). The assumption restricts the eigenvalues of $\nabla^2\log \pi$ to be in $[-M,M]$.
%   The first inequality in \eqref{weak.conv.m.smoothness} is called weak convexity of the potential $-\log\pi(x)$, and the second is called $M$-smoothness.  Both are common in numerical analysis (e.g.\ \cite{lytras.metrikopoulos:24}).  The assumption restricts the eigenvalues of $\nabla^2\log \pi$ to be in $[-M,M]$.
\end{remark}

The main result of this section is the following.

\begin{theorem}[Weak convergence to an overdamped Langevin diffusion]\label{scaling.limit:00}
   Let Assumptions  \ref{lambda.positivity:00}-\ref{ass.finite.lambda} and \ref{ass.scaling.limit:1} hold, and let $\sigma_n \in (0,1)$ be a sequence with $\lim_{n \rightarrow \infty} \sigma_n =0$. For every $n \in \mathbb{N}$, let $\left( Y^{n}_t \right)_{t \geq 0}$ be the $d$-dimensional LBMJP with $\gamma(x,\cdot)$ chosen to be a $N(x,\sigma_n^2I_d)$ distribution and $ \left(S^n_t\right)_{t \geq 0}$ defined as $S^n_t:=Y_{\sigma_n^{-2}t}$. Let $\left( S_t \right)_{t \geq 0}$ be the overdamped Langevin diffusion process governed by the stochastic differential equation
   $$
   dS_t= \frac{1}{2} \nabla \log \pi(S_t)dt+dB_t.
   $$
   Then $S^n \xrightarrow{n \rightarrow \infty}S$ weakly in the Skorokhod topology.
\end{theorem}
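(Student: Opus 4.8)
The plan is to prove the convergence by establishing convergence of the (time-rescaled) generators on a core of the limiting diffusion's generator and then invoking a standard martingale-problem convergence theorem for the Skorokhod topology (Chapter~4 of \cite{ethier2009markov}). Throughout I work with test functions $f\in C_c^\infty(\mathbb{R}^d)$, which form a core for the operator $\mathcal{L}f:=\tfrac12\Delta f+\tfrac12\nabla\log\pi\cdot\nabla f$ generating the overdamped Langevin diffusion. The central task is the generator computation. Since $\gamma(x,\cdot)=N(x,\sigma_n^2 I_d)$ has a symmetric Lebesgue density, the Radon--Nikodym derivative \eqref{eq:radon_nikodym} reduces to $t(x,y)=\pi(y)/\pi(x)$. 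Writing $y=x+\sigma_n z$ with $z\sim N(0,I_d)$ and letting $L_n:=\sigma_n^{-2}L$ be the generator of $S^n$, with $L$ the weak generator of $Y^n$ from \Cref{weak.generator.domain:0}, I will Taylor expand
\[
L_n f(x)=\sigma_n^{-2}\,\mathbb{E}_z\!\left[\,g\!\left(\frac{\pi(x+\sigma_n z)}{\pi(x)}\right)\big(f(x+\sigma_n z)-f(x)\big)\right].
\]

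The key algebraic input is that every differentiable balancing function satisfies $g'(1)=\tfrac12$: differentiating $g(t)=tg(1/t)$ at $t=1$ gives $g'(1)=1-g'(1)$. Expanding $\log(\pi(x+\sigma_n z)/\pi(x))$ to second order in $\sigma_n$, expanding $g$ around $1$ and $f$ around $x$, multiplying out and integrating against the standard Gaussian, the $O(\sigma_n^{-1})$ contribution vanishes because $\mathbb{E}_z[z]=0$, while the surviving $O(1)$ term is $\tfrac12\mathbb{E}_z[z^\top\nabla^2 f(x)z]+g'(1)\,\mathbb{E}_z[(z\cdot\nabla\log\pi(x))(z\cdot\nabla f(x))]=\tfrac12\Delta f(x)+\tfrac12\nabla\log\pi(x)\cdot\nabla f(x)=\mathcal{L}f(x)$, using $\mathbb{E}_z[zz^\top]=I_d$. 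It is worth highlighting that the curvature $g''(1)$ and the Hessian of $\log\pi$ enter only at order $O(\sigma_n^3)$ and therefore drop out, which is precisely why the limit is independent of the choice of $g$, and why the factor $\tfrac12$ in the Langevin drift is exactly $g'(1)$.

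The main obstacle is upgrading this formal expansion to a rigorous statement $\|L_nf-\mathcal{L}f\|_\infty\to0$, since the expansion of $g$ around $1$ is valid only where $\pi(x+\sigma_n z)/\pi(x)\approx1$, i.e.\ for moderate $z$, whereas the Gaussian integral ranges over all of $\mathbb{R}^d$. I would split the integral at $|z|=\delta_n$ with $\sqrt{2\log(1/\sigma_n)}\ll\delta_n\ll\sigma_n^{-1}$, for instance $\delta_n=\sigma_n^{-1/2}$. On $\{|z|\le\delta_n\}$ one has $\sigma_n|z|\to0$ uniformly, so the Taylor remainders are genuinely higher order and, using that $f\in C_c^\infty$ has bounded derivatives, vanish after division by $\sigma_n^2$. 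On $\{|z|>\delta_n\}$ I use $|f(x+\sigma_n z)-f(x)|\le2\|f\|_\infty$ together with the linear growth bound $g(t)\le1+t$ from \Cref{lem:gbound1} and the $M$-smoothness assumption \eqref{weak.conv.m.smoothness}, which gives $\pi(x+\sigma_n z)/\pi(x)\le\exp\{\sigma_n z\cdot\nabla\log\pi(x)+\tfrac{M\sigma_n^2}{2}|z|^2\}$; for $\sigma_n$ small the quadratic term is absorbed by the Gaussian, and the residual tail integral decays like $e^{-\delta_n^2/2}$, which beats the $\sigma_n^{-2}$ prefactor by the choice of $\delta_n$. Because the $M$-smoothness bound is uniform in $x$, this control is uniform, upgrading pointwise to uniform convergence.

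It remains to assemble the weak convergence. First, under \eqref{weak.conv.m.smoothness} the drift $\tfrac12\nabla\log\pi$ is globally Lipschitz, so the SDE has a unique strong solution, the martingale problem for $\mathcal{L}$ is well-posed and $C_c^\infty$ is a core. Next I establish the compact containment condition for $\{S^n\}$: applying the same expansion to a Lyapunov function $V(x)=1+|x|^2$ (localising via the process stopped on exiting balls, as in the earlier stability arguments, to accommodate that $V$ is unbounded and hence outside the scope of \Cref{weak.generator.domain:0}) and using $|\nabla\log\pi(x)|\le|\nabla\log\pi(0)|+M|x|$ yields a drift bound $L_nV\le cV$ uniform in $n$, whence $\mathbb{E}_x[V(S^n_t)]\le V(x)e^{ct}$ by Gr\"onwall, ruling out escape to infinity on finite horizons; Markov's inequality then gives compact containment. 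Finally, with $S^n_0\Rightarrow S_0$, generator convergence on the core $C_c^\infty$, well-posedness of the limiting martingale problem, and compact containment all in hand, a standard convergence theorem (e.g.\ Theorem~4.8.2 and Corollary~4.8.16 of \cite{ethier2009markov}) yields $S^n\Rightarrow S$ in the Skorokhod topology, completing the proof.
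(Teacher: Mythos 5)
Your core computation coincides with the paper's: the reduction $t(x,y)=\pi(y)/\pi(x)$ for the symmetric Gaussian kernel, the identity $g'(1)=\tfrac12$ extracted from the balancing property (the paper works with $b=\log g\circ\exp$, so $b(0)=0$, $b'(0)=\tfrac12$, which is the same expansion in logarithmic coordinates), the second-order Taylor expansion isolating $\tfrac12\Delta f+\tfrac12\nabla\log\pi\cdot\nabla f$, and control of the remainders via $M$-smoothness together with the linear bound on $g$ from \Cref{lem:gbound1}, with the martingale-problem input supplied by \Cref{weak.generator.domain:0} exactly as in your sketch. The divergence is in the final assembly: the paper proves convergence of the rescaled generators uniformly \emph{only on compact sets} --- its key estimates, e.g.\ $\exp\{k\xi\}\le\max\{1,\tilde M^k/\pi(x)^k\}$ and the bound on $b''(e(x,\sigma_n^{(1)}))$, are taken as suprema over a compact $C$ precisely because $1/\pi(x)$ and $|\nabla\log\pi(x)|$ are unbounded --- and then invokes Theorem 8.1 of \cite{monmarche.rousset.zitt:22}, which accepts locally uniform generator convergence plus well-posedness of the limiting martingale problem and does not require a separate verification of compact containment.

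Two steps in your route are genuinely under-justified. First, the claim $\|L_nf-\mathcal{L}f\|_\infty\to0$ on all of $\mathbb{R}^d$ does not follow from ``the $M$-smoothness bound is uniform in $x$'': the coefficients in the expansion involve $\nabla\log\pi(x)$, which \eqref{weak.conv.m.smoothness} bounds only linearly in $|x|$, and your tail estimate $g(t)\le 1+t$ introduces the ratio $\pi(x+\sigma_n z)/\pi(x)$, controlled only by $\tilde M/\pi(x)$, which blows up in the tails; this is exactly why the paper states its bounds as $\sup_{x\in C}$. That overreach is repairable, since locally uniform convergence suffices for Ethier--Kurtz-type theorems \emph{when compact containment holds} --- but this leads to the second, more serious gap: the asserted drift bound $L_nV\le cV$ for $V(x)=1+|x|^2$ with $c$ uniform in $n$ is nontrivial and may fail as stated. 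Applying ``the same expansion'' to $V$ leaves remainder terms of size $\sigma_n|x|^3$ (e.g.\ the $b''\cdot e(x,\sigma_n)^2$ term paired with $\nabla V(x)\cdot Z$, using $|\nabla\log\pi(x)|\le|\nabla\log\pi(0)|+M|x|$), which is not $O(V)$ uniformly over $n$ and $x$; and on the region where the expansion is invalid, crude bounds produce factors of order $\exp\{c\,\sigma_n^2|\nabla\log\pi(x)|^2\}$, which for fixed $n$ dominate $V$ as $|x|\to\infty$. Under \Cref{ass.scaling.limit:1} alone, $\pi$ need not be unimodal or log-concave, so there is no sign structure forcing the large-$t$ (inward-jump) contributions to decrease $V$, and compact containment therefore requires a genuine argument (localization with jump-size control, or a differently tailored Lyapunov function) rather than the one-line estimate given. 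The cleanest repair is the paper's: establish only the compact-uniform remainder bound and conclude via Theorem 8.1 of \cite{monmarche.rousset.zitt:22}, which removes the compact-containment obligation entirely.
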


\begin{proof}[Proof of \Cref{scaling.limit:00}]
The proof is presented in \ref{appendix:proof.scaling.limit:00}.
\end{proof}

\begin{remark}
 The proof of  \Cref{scaling.limit:00} can be straightforwardly generalised to non-Gaussian $\gamma$ with similarly decaying variance. If $z_n$ is the jump for the $n$th process under the law of $\gamma$ then the main restriction is to control the moments of $e(x,\sigma_n)=\phi(x+z_n)-\phi(x)$, and in particular guarantee that 
    \begin{equation*}
        \sup_{x \in C} \mathbb{E}_{\gamma}\left[ e(x,\sigma_n)^8 \right]
    \end{equation*}
    to control the higher order terms (such as the term $A(x,\sigma_n)$ in the proof)
    In particular, we observe that kernels $\gamma$ such as those considered in \Cref{subsec:unif_erg} satisfy this property.
\end{remark}

\section{Discussion}\label{sec::Discussion}

\subsection{Use in Monte Carlo simulation}\label{sec::Monte Carlo}

There are two natural ways to use locally-balanced Markov processes for Monte Carlo computation of the integral $\int_E f(x)\pi(dx)$. The first is to simulate a realisation of an LBMJP for $T$ units of time, and then compute ergodic averages along the trajectory.  For simplicity we assume that $T := T_N = \sum_{j=1}^N \tau_j$, where $\tau_j \sim \text{Exp}(\lambda(X_{j-1})$ and $\{X_0,X_1,...,X_{N-1})$ is the embedded Markov chain with transition kernel $\Gamma_g$ as in \eqref{def.gamma.g:1}.  This equates to using the estimator
$$
\hat{f}_N^{MC} := \frac{1}{T_N}\int_0^{T_N} f(Y_t)dt = \frac{1}{T_N}\sum_{j=1}^N \tau_j f(X_{j-1}).
$$
Another approach to computing the same integral is to refrain from simulating the continuous-time process in its entirety, and instead only simulate the embedded Markov chain $\{X_0,X_1,...,\\
X_{N-1}\}$ and use the importance weighted estimator
$$
\hat{f}_N^{IS} := \sum_{j=1}^N \frac{\lambda(X_{j-1})^{-1}}{\sum_{k=1}^N \lambda(X_{k-1})^{-1}} f(X_{j-1}).
$$
In fact it can be shown that $\hat{f}_N^{IS}$ always has lower asymptotic variance than $\hat{f}_N^{MC}$ using a Rao--Blackwellisation argument.  Computing $\hat{f}_T^{IS}$ is called importance tempering in \cite{li2023importance}, and in Theorem B1 of that work the authors quantify how much improvement will be made when comparing the two approaches depending on the particular LBMJP and function $f$ when $E$ is finite.  See also \cite{zhou2022rapid} for more details.

Another approach to computing the integral $\int_E f(x) \pi(dx)$ is to use the kernel $\Gamma_g$ defined in equation \eqref{def.gamma.g:1} to generate proposals within a Metropolis--Hastings algorithm \cite{metropolis1953equation,hastings1970monte}.  If the current state of the chain is $\tilde{X}_i$ and $Y \sim \Gamma_g(\tilde{X}_i,\cdot)$ then the next state $\tilde{X}_{i+1}$ is set to be $Y$ with probability $\min(1, \lambda(\tilde{X}_i)/\lambda(Y))$, otherwise $\tilde{X}_{i+1}$ is set to be $\tilde{X}_i$.  Following this $\int_E f(x)\pi(dx)$ is estimated by
$$
\hat{f}_N^{MH} := \frac{1}{N}\sum_{j=1}^N f(\tilde{X}_{j-1}).
$$
In \cite{zanella2020informed} locally-balanced processes are used in this way to construct Monte Carlo estimators.  It is not immediately clear which of $\hat{f}_N^{IS}$ and $\hat{f}_N^{MH}$ is more effective for a given problem, but some useful discussion in this direction is provided in \cite{zhou2022rapid}.

\subsection{Non-reversible extensions}\label{sec::Non-reversible}

There has been recent interest within the Markov chain monte Carlo sampling community in designing algorithms based on non-reversible Markov processes.  The motivation is that such processes can often have desirable mixing properties (e.g.\ \cite{diaconis.holmes.neal:00}).  Here we briefly illustrate how to extend the framework of locally-balanced Markov processes to allow for non-reversible processes to be constructed in a natural way.

Take an arbitrary Markov kernel $\check{P}$ and mapping $T:E\to E$ such that $Q(z,A) := \delta_{T(z)}(A)$, $\mu Q = \mu$ and the corresponding operator $Qf(z):= \int f(z')Q(z,dz')$ is an isometric involution (see Definition 1 of \cite{andrieu2021peskun}).  Then we define the Markov jump kernel
$$
\check{J}(z,dz') = g\left( \frac{\mu(dz')Q\check{P} Q(z',dz)}{\mu(dz)\check{P}(z,dz')} \right)\check{P}(z,dz').
$$
A rigorous treatment of the above Radon-Nikodym derivative is given in \cite{thin2020nonreversible}.  It is straightforward to show that the condition
$$
\mu(dz)\check{J}(z,dz') = \mu(dz')Q\check{P}Q(z',dz),
$$
known as \textit{skew} or \textit{modified} detailed balance (e.g.\ \cite{andrieu2021peskun,thin2020nonreversible}), is satisfied. Similarly the corresponding operator
$$
\check{L}f(z) := \int [f(z'-f(z)]g\left( \frac{\mu(dz')Q\check{P} Q(z',dz)}{\mu(dz)\check{P}(z,dz')} \right)\check{P}(z,dz')
$$
is $(\mu,Q)$ self-adjoint, meaning that for any suitable $f,g$ it holds that $\langle f,Lg\rangle_\mu = \langle QLQf,g\rangle_\mu$.

The above set up can therefore be used to construct non-reversible locally balanced Markov jump processes.   
We leave a thorough exploration of these non-reversible locally-balanced processes to future work.

\section*{Acknowledgements}
The authors would like to thank the Isaac Newton Institute for Mathematical Sciences, Cambridge, for support and hospitality during the programme {\it Stochastic systems for anomalous diffusion}, where work on this paper was undertaken. They would also like to thank Andrea Bertazzi for helpful discussions, and Codina Cotar for inviting them to participate at the INI programme. Part of the research was conducted while GV was a postdoctoral fellow at UCL, under SL.

\section*{Funding}
SL and GV were supported by an EPSRC New Investigator Award (EP/V055380/1). GZ was suported by ERC, through StG “PrSc-HDBayLe” grant (101076564). This work was supported by EPSRC (EP/Z000580/1).

\bibliographystyle{plainnat}
\bibliography{main.bib}

\appendix

\section{Proof of \Cref{point.wise.limit.generator:1}}\label{appendix:pointwise.limit:000}
\begin{proof}[Proof of \Cref{point.wise.limit.generator:1}]
    We will prove the result when $h \rightarrow 0^+$. The case where $h \rightarrow 0^-$ follows similarly. Recall that when the process starts from $x \in E$, $\tau_1 \exp\left(\lambda(x)\right)$ is the first jumping time. Therefore, for $h >0$, on the event $\left\{ \tau_1 > h \right\}$ we have $X_h=x$. At the same time the density of $\tau_1$ is given by 
\begin{equation*}
    f_{\tau_1}(s)\lambda(x) \exp\left\{  -\lambda(x) s \right\}.
\end{equation*}
We then write,
\begin{align}\label{point.wise.limit.generator:1.1}
    &\frac{1}{h}\left( \mathbb{E}_x\left[ f(X_h) \right] - f(x) \right) = \frac{1}{h}\mathbb{E}_x\left[ \left(f(X_h) -f(x) \right) \mathbb{I}_{\tau_1 \leq h} \right] + \frac{1}{h}\mathbb{E}_x\left[ \left(f(X_h) -f(x) \right) \mathbb{I}_{\tau_1 > h} \right] \nonumber \\
    &= \frac{1}{h} \int_{E}  \int_0^h  \mathbb{E}_y\left[ f(X_{h-s})   \right] \lambda(x) \exp\left\{ -\lambda(x) s \right\}ds\Gamma_g(x,dy) - \frac{1}{h}f(x) \mathbb{P}_x\left( \tau_1 > h \right) \nonumber \\
    &=  \int_{E} \frac{1}{h} \int_0^h  \mathbb{E}_y\left[ f(X_{h-s})   \right] \lambda(x) \exp\left\{ -\lambda(x) s \right\}ds\Gamma_g(x,dy)  - f(x) \frac{1}{h} \left( 1- \exp\left\{ -\lambda(x)h  \right\} \right).
\end{align}
A simple calculation shows that
\begin{equation}\label{point.wise.limit.generator:1.2}
    f(x) \frac{1}{h} \left( 1- \exp\left\{ -\lambda(x)h  \right\} \right) \xrightarrow{h \rightarrow 0} \lambda(x) f(x).
\end{equation}
Furthermore, for any $y \in E$, consider the quantity 
\begin{align}\label{point.wise.limit.generator:1.3}
   A(h,y)&:= \frac{1}{h} \int_0^h  \mathbb{E}_y\left[ f(X_{h-s})   \right] \lambda(x) \exp\left\{ -\lambda(x) s \right\}ds =\lambda(x) \frac{1}{h}\int_0^h\mathbb{E}_y\left[ f(X_u) \right] \exp\{ -\lambda(x)(h-u) \}du \nonumber \\
   =&\lambda(x) \exp\{ -\lambda(x) h \} \frac{1}{h} \int_0^h \mathbb{E}_y\left[ f(X_u) \right] \exp\{ \lambda(x)u \} du.
\end{align}

Now, for any $u,h >0$ let $J=\left\{  \text{there exists a jump on the interval } (u,u+h) \right\}$. We calculate
\begin{align*}
    &\left| \mathbb{E}_y\left[ f(X_{u+h}) \right] - \mathbb{E}_y\left[ f(X_{u}) \right] \right| = \left|  \mathbb{E}_y\left[ f(X_{u+h}) - f(X_{u}) \right] \right| = \left|  \mathbb{E}_y\left[ \left( f(X_{u+h}) - f(X_{u}) \right) \mathbb{I}_J \right] \right| \\
    & \leq  \mathbb{E}_y\left[ \left| f(X_{u+h}) - f(X_{u}) \right| \mathbb{I}_J \right] \leq 2 \| f \|_{\infty} \mathbb{P}_y\left( J \right) \xrightarrow{h \rightarrow 0} 0.
\end{align*}
Therefore the function $u \rightarrow \mathbb{E}_y\left[ f(X_{u}) \right]$ is continuous, and the same holds for the function $u \rightarrow \mathbb{E}_y\left[ f(X_u) \right] \exp\{ \lambda(x)u \}$. From the Fundamental Theorem of Calculus and \eqref{point.wise.limit.generator:1.3}, we get for all $y \in E$
\begin{equation*}
    A(h,y) \xrightarrow{h \rightarrow 0} \lambda(x)f(y),
\end{equation*}
and since $|f|$ is bounded, for all $h \in (0,1)$ $|A(h,y)| \leq \| f \|_{\infty} \lambda(x)$. From Bounded Convergence Theorem 
\begin{equation*}
    \int_{E} \frac{1}{h} \int_0^h  \mathbb{E}_y\left[ f(X_{h-s})   \right] \lambda(x) \exp\left\{ -\lambda(x) s \right\}ds\Gamma_g(x,dy) \xrightarrow{h \rightarrow 0} \lambda(x) \int_E f(y)  \Gamma_g(x,dy).
\end{equation*}
Combining this with \eqref{point.wise.limit.generator:1.1} and \eqref{point.wise.limit.generator:1.2}, the result follows.
\end{proof}

\section{Proof of \Cref{theorem.uniform.ergo:00}}\label{theorem.uniform.ergo:000}

In order to prove the result, we will need a series of intermediate propositions and lemmas. We begin by defining a desired quantity.

\begin{definition}
   For $k \in \mathbb{N}$, we write $h_k=\inf_{t \geq 0} \left\{ Y_t=k \right\}$ for the hitting time of $k$.
\end{definition}

Assume that the current state of the process is $n \in \mathbb{N}$. We denote the probabilities associated with the process moving to the right or to the left
$$
p(n) := \mathbb{P}(X_1 = n+1|X_0 = n)=\frac{g\left(\frac{\pi(n+1)}{\pi(n)}\right)}{g\left(\frac{\pi(n-1)}{\pi(n)}\right) + g\left(\frac{\pi(n+1)}{\pi(n)}\right)}, \quad q(n) := 1-p(n)
$$
respectively. Let
$$
a_n := \frac{1}{\lambda(n)q(n)} = \frac{2}{g\left(\frac{\pi(n-1)}{\pi(n)}\right)},
$$
and define the odds associated with moving to the right as
$$
b(n)=\frac{p(n)}{q(n)}.
$$
For a fixed $k \in \mathbb{N}$, and for $n \geq k+1$, we also define  
$$
\gamma_n := 1 + b(n-1) + b(n-1)b(n-2) + ... + b(n-1)b(n-2)b(n-3)...b(k+1).
$$
%and define
%$$
%\delta_n:=b(n)  + b(n)b(n-1) + ... + b(n)...b(k+1) = b(n) \gamma_n,
%$$
where we omit the dependence of the quantity $\gamma$ on $k$ for ease of notation.

The following lemma relates the hitting times of the process with the above defined quantities.

\begin{lemma}\label{lemma.hitt.from.N:1}
    For all $N  > k \in \mathbb{N}$, assume that the LBMJP starts from $N$. Then, we have 
    \begin{equation} \label{eq:hitt.from.N}
        \mathbb{E}_{N}\left[ h_{k} \right]= \sum_{n=k}^{N-1}a_{n+1} \gamma_{n+1} +\mathbb{E}_{N+1}\left[  h_{N}\right] b(N)\gamma_N
    \end{equation}
\end{lemma}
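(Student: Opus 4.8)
The plan is to reduce everything to the one-level downward passage times $T_n := \mathbb{E}_n[h_{n-1}]$ and to exploit the skip-free (nearest-neighbour) structure of the embedded chain driven by the kernel $\gamma$ in \eqref{srw.kernel:1}. First I would observe that since the process can only move to adjacent states, to travel from $N$ down to $k$ it must pass through each intermediate level in turn; applying the strong Markov property successively at the hitting times $h_{N-1},h_{N-2},\dots,h_{k+1}$ gives the additive decomposition
\begin{equation*}
\mathbb{E}_N[h_k] = \sum_{n=k+1}^{N} T_n.
\end{equation*}
A first-step analysis at state $n$ --- wait an $\mathrm{Exp}(\lambda(n))$ holding time, then jump left to $n-1$ with probability $q(n)$ (done) or right to $n+1$ with probability $p(n)$ --- combined with the skip-free identity $\mathbb{E}_{n+1}[h_{n-1}] = T_{n+1} + T_n$ yields $T_n = \lambda(n)^{-1} + p(n)(T_{n+1}+T_n)$, which rearranges, using $a_n = 1/(\lambda(n)q(n))$ and $b(n) = p(n)/q(n)$, to the recursion
\begin{equation*}
T_n = a_n + b(n)\,T_{n+1}.
\end{equation*}

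Next I would solve this recursion by unrolling it from level $n$ up to level $N$, obtaining
\begin{equation*}
T_n = \sum_{j=n}^{N}\Big(\prod_{i=n}^{j-1} b(i)\Big) a_j + \Big(\prod_{i=n}^{N} b(i)\Big) T_{N+1},
\end{equation*}
with the convention that an empty product equals $1$. Substituting this into $\mathbb{E}_N[h_k]=\sum_{n=k+1}^N T_n$ and interchanging the order of the (finite) double sum, the coefficient of $a_j$ becomes $\sum_{n=k+1}^{j}\prod_{i=n}^{j-1}b(i)$, which is precisely $\gamma_j$, while the coefficient of $T_{N+1}=\mathbb{E}_{N+1}[h_N]$ becomes $\sum_{n=k+1}^{N}\prod_{i=n}^{N}b(i) = b(N)\sum_{n=k+1}^{N}\prod_{i=n}^{N-1}b(i) = b(N)\gamma_N$. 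This produces
\begin{equation*}
\mathbb{E}_N[h_k] = \sum_{j=k+1}^{N} a_j\gamma_j + \mathbb{E}_{N+1}[h_N]\,b(N)\gamma_N,
\end{equation*}
and reindexing the first sum by $j = n+1$ gives exactly \eqref{eq:hitt.from.N}. An essentially equivalent route is induction on $N$, peeling off $T_{N+1}=a_{N+1}+b(N+1)T_{N+2}$ at each step, with base case $N=k+1$ reducing to the recursion itself since $\gamma_{k+1}=1$.

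I expect the only genuinely delicate point to be the justification of the first-step analysis and the additive identity $\mathbb{E}_{n+1}[h_{n-1}] = T_{n+1}+T_n$, since a priori these expected hitting times could be infinite. The cleanest way to handle this is to interpret all identities in $[0,\infty]$: the skip-free strong Markov decomposition holds verbatim provided the downward hitting times are almost surely finite, which is guaranteed here by the strong left drift built into \Cref{ass.unif.ergo:1} (equivalently, recurrence of the embedded chain towards $0$), and the interchange of summation is unconditionally valid because every summand is nonnegative. I would also flag that the whole point of retaining the remainder $\mathbb{E}_{N+1}[h_N]\,b(N)\gamma_N$, rather than unrolling to infinity, is to keep the identity exact and finite at each fixed $N$, deferring control of this tail term to the subsequent estimates in the uniform ergodicity argument.
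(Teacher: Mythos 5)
Your proposal is correct and follows essentially the same route as the paper's proof: the skip-free decomposition $\mathbb{E}_N[h_k]=\sum_n \mathbb{E}_{n+1}[h_n]$, first-step analysis giving the recursion $\mathbb{E}_n[h_{n-1}]=a_n+b(n)\,\mathbb{E}_{n+1}[h_n]$, unrolling up to level $N$, and summing to identify the coefficients $\gamma_j$ and $b(N)\gamma_N$. If anything, your explicit treatment of the identities in $[0,\infty]$ is more careful than the paper, which performs the rearrangement without comment even though the lemma is later applied (in \Cref{non.uniform.ergo.result:1}) in a regime where the expectations may be infinite.
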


\begin{proof}[Proof of \Cref{lemma.hitt.from.N:1}]
We observe that 
\begin{equation}\label{lemma.hitt.from.N:0.5}
    \mathbb{E}_N\left[ h_{k} \right]=\sum_{n=k}^{N-1}\mathbb{E}_{n+1}\left[ h_{n} \right].
\end{equation}

Let us write $X^n$ to denote the Markov chain with transition kernel $\Gamma_g$, starting from $n$. We also write $\tau_m \sim \exp\left( \lambda(m) \right)$ to denote the time the process $Y$ stays in $m \in \mathbb{N}$ before jumping. Note that

\begin{align}\label{lemma.hitt.from.N:2}
\mathbb{E}_{n}[h_{n-1}] 
&= 
\mathbb{E}_n\left[ \mathbb{E} \left[ h_{n-1} | X^n \right] \right] =
\mathbb{E}_n\left[ q(n)\tau_{n} + p(n) \left( \tau_n + \mathbb{E}_{n+1}\left[ h_{n-1} | X^{n+1} \right] \right) \right] 
\\
&= 
q(n) \frac{1}{\lambda(n)} + p(n) \frac{1}{\lambda(n)} + p(n) \mathbb{E}_{n+1}\left[  h_{n-1}\right] =
\frac{1}{\lambda(n)} + p(n) \mathbb{E}_{n+1}\left[  h_{n-1}\right]. \nonumber
\end{align}

Now, from the Markov property, we have $\mathbb{E}_{n+1}\left[ h_{n-1} \right]=\mathbb{E}_{n+1}\left[ h_{n} \right]+\mathbb{E}_{n}\left[ h_{n-1} \right]$, so (\ref{lemma.hitt.from.N:2}) becomes

\begin{equation*}
    \mathbb{E}_{n}[h_{n-1}] = \frac{1}{\lambda(n)} + p(n) \mathbb{E}_{n+1}\left[  h_{n}\right] + p(n) \mathbb{E}_{n}\left[  h_{n-1}\right].
\end{equation*}

Rearranging, we get
\begin{equation}\label{lemma.hitt.from.N:3}
 \mathbb{E}_{n}[h_{n-1}] = \frac{1}{\lambda(n)q(n)} + \frac{p(n)}{q(n)} \mathbb{E}_{n+1}\left[  h_{n}\right] = a(n) + b(n) \mathbb{E}_{n+1}\left[  h_{n}\right] .
\end{equation}

We therefore have that $\mathbb{E}_{n}[h_{n-1}]= a(n)+a(n+1)b(n) + b(n+1) b(n) \mathbb{E}_{n+2}[h_{n+1}]$.  Applying this formula recursively gives
\begin{align}\label{lemma.hitt.from.N:4}
    \mathbb{E}_{n}[h_{n-1}]= &a(n) + a(n+1) b(n) + a(n+2) b(n+1) b(n) + \dots \nonumber\\
    &+ a(N) b(N-1) b(N-2) \dots b(n) +b(N) b(N-1) \dots b(n) \mathbb{E}_{N+1}\left[ h_N \right]
\end{align}
Using (\ref{lemma.hitt.from.N:0.5}) and on summing over $n$, we recover \eqref{eq:hitt.from.N}.

\end{proof}

An interesting corollary is the following. It shows that the behaviour of the series of $a_n$ is crucial for the process to come down from infinity in finite time, which itself is crucial for uniform ergodicity.

\begin{corollary}\label{non.uniform.ergo.result:1}
    If $\sum_{n=1}^{\infty}a_n = +\infty$, then for any $k$, $\lim_{N \rightarrow +\infty} \mathbb{E}_{N}\left[ h_k \right] = + \infty$.
\end{corollary}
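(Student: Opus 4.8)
The plan is to extract a lower bound for $\mathbb{E}_N[h_k]$ directly from the exact identity \eqref{eq:hitt.from.N} established in \Cref{lemma.hitt.from.N:1}, and then invoke the divergence of the series $\sum_n a_n$. The whole content sits in the preceding lemma; what remains is a monotonicity/positivity argument followed by a tail-of-a-divergent-series observation.

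First I would record that, since $p(n),q(n)\in(0,1)$ for every $n$, each odds ratio $b(n)=p(n)/q(n)$ is strictly positive. Consequently every $\gamma_{n+1}$ is a sum of strictly positive terms whose leading summand equals $1$, so $\gamma_{n+1}\geq 1$ for all $n\geq k$. In the same way $a_{n+1}=2/g(\pi(n)/\pi(n+1))>0$ because $g$ is positive on $(0,\infty)$, and $\mathbb{E}_{N+1}[h_N]\geq 0$ as the expectation of a nonnegative hitting time. Hence the trailing term $\mathbb{E}_{N+1}[h_N]\,b(N)\gamma_N$ appearing in \eqref{eq:hitt.from.N} is nonnegative.

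Discarding this nonnegative trailing term and using $\gamma_{n+1}\geq 1$ in the remaining sum then gives
\begin{equation*}
\mathbb{E}_N[h_k]\;\geq\;\sum_{n=k}^{N-1}a_{n+1}\gamma_{n+1}\;\geq\;\sum_{n=k}^{N-1}a_{n+1}\;=\;\sum_{m=k+1}^{N}a_m .
\end{equation*}
Since all the $a_m$ are nonnegative and $\sum_{n=1}^{\infty}a_n=+\infty$, removing the finitely many terms $a_1,\dots,a_k$ leaves a divergent series, so $\sum_{m=k+1}^{N}a_m\to+\infty$ as $N\to\infty$. This forces $\lim_{N\to\infty}\mathbb{E}_N[h_k]=+\infty$, as claimed.

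I do not expect a genuine obstacle: the step that does the real work, the recursive identity for the hitting-time expectations, is already supplied by \Cref{lemma.hitt.from.N:1}. The only points that deserve an explicit line are the strict positivity of each $b(n)$ (which yields $\gamma_{n+1}\geq 1$) and the nonnegativity of the remainder term, both immediate from the definitions of $p(n),q(n),a_n$ and $\gamma_n$; everything else is the elementary fact that the tail of a divergent series of nonnegative terms still diverges.
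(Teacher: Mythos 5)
Your proof is correct and follows essentially the same route as the paper: bound $\gamma_{n+1}\geq 1$, drop the nonnegative trailing term in \eqref{eq:hitt.from.N}, and let $N\to\infty$ using divergence of $\sum_n a_n$. Your version is if anything slightly more careful, spelling out the positivity of $b(n)$, $a_n$ and the hitting-time expectation, and keeping the index $a_{n+1}$ consistent with \eqref{eq:hitt.from.N} where the paper's displayed bound writes $\sum_{n=k}^{N-1}a_n$ — an immaterial shift since the series diverges either way.
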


\begin{proof}[Proof of \Cref{non.uniform.ergo.result:1}]
    Since $\gamma(n) \geq 1$, from \eqref{eq:hitt.from.N} we get for all $N > k$
    $$
    \mathbb{E}_{N}\left[ h_k \right] \geq \sum_{n=k}^{N-1}a_n,
    $$
    and the result follows on letting $N \rightarrow +\infty$.
\end{proof}

We will now make the following assumption and state any further results in this subsection assuming that it holds. As we will see later, this assumption is weaker than Assumption \ref{ass.unif.ergo:1}.

\begin{assumption}\label{ass.unif.ergo:0}
    Assume that there exists  $k \in \mathbb{N}$ and $p=p^{(k)}$ and $\lambda=\lambda^{(k)}$ such that for all $n \geq k$ $p(n) \leq p<\frac{1}{2}$ and $\lambda(n) \geq \lambda > 0$. Assume further that 
    \begin{equation}\label{lemma.hitt.from.N:6}
   n b(n) \xrightarrow{n \rightarrow \infty} 0.
    \end{equation}
\end{assumption}

We then have the following.

\begin{proposition}\label{lemma.ass:1} Assume that Assumption \ref{ass.unif.ergo:0} holds. Then for any compact set $C \subset \mathbb{N}$,
\begin{equation}\label{come.down.infty:1}
      \limsup_{N \rightarrow \infty} \mathbb{E}_{N}\left[ h_{C} \right] < \infty \iff \sum_{n=1}^{\infty}a(n)<\infty.
   \end{equation}
\end{proposition}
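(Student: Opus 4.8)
The plan is to reduce the hitting time of a compact set to that of a single point and then read off both implications from the exact formula \eqref{eq:hitt.from.N} of \Cref{lemma.hitt.from.N:1}. Because the base kernel \eqref{srw.kernel:1} only permits unit steps, a process started at $N > \max C$ must pass through $\max C$ before it can enter $C$, so $h_C = h_{\max C}$ and hence $\mathbb{E}_N[h_C] = \mathbb{E}_N[h_{\max C}]$ for all such $N$. Setting $k := \max C$ (and, if necessary, enlarging $k$ to the threshold appearing in \Cref{ass.unif.ergo:0}, which changes every quantity below only by a fixed additive constant and so affects neither finiteness nor infiniteness of the $\limsup$), it suffices to analyse $\mathbb{E}_N[h_k]$ as $N \to \infty$. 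For the implication $\limsup_N \mathbb{E}_N[h_C] < \infty \Rightarrow \sum_n a_n < \infty$ I would argue by contraposition: if $\sum_n a_n = \infty$, then \Cref{non.uniform.ergo.result:1} gives $\mathbb{E}_N[h_k] \to \infty$, whence $\limsup_N \mathbb{E}_N[h_C] = \infty$. This direction is immediate from the corollary.

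The substantive direction is $\sum_n a_n < \infty \Rightarrow \limsup_N \mathbb{E}_N[h_C] < \infty$, and here I would exploit \Cref{ass.unif.ergo:0}. Since $p(n) \le p < 1/2$ for $n \ge k$ and $t \mapsto t/(1-t)$ is increasing, we have $b(n) \le r := p/(1-p) < 1$ for all $n \ge k$. This first bounds $\gamma_n$ uniformly: each summand defining $\gamma_n$ is a product of consecutive factors $b(\cdot)$, so $\gamma_n \le \sum_{m \ge 0} r^m = (1-r)^{-1} =: M_\gamma$ for every $n$. Consequently the first term in \eqref{eq:hitt.from.N} is controlled, uniformly in $N$, by
\begin{equation*}
\sum_{n=k}^{N-1} a_{n+1}\gamma_{n+1} \le M_\gamma \sum_{n} a_n < \infty .
\end{equation*}

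It remains to bound the remainder term $\mathbb{E}_{N+1}[h_N]\, b(N)\,\gamma_N$. Writing $u_n := \mathbb{E}_{n+1}[h_n]$, the one-step identity \eqref{lemma.hitt.from.N:3} reads $u_n = a(n+1) + b(n+1)u_{n+1}$. I would identify $u_n$ with the minimal non-negative solution of this recursion (the standard characterisation of mean hitting times, realised as the monotone limit of the truncated finite systems), which yields the explicit series $u_n = \sum_{j \ge 1} a(n+j)\prod_{i=1}^{j-1} b(n+i)$. Since $\sum_n a_n < \infty$ forces $A := \sup_n a_n < \infty$, the bound $b(\cdot) \le r$ gives $u_n \le A\sum_{j\ge1} r^{j-1} = A/(1-r)$ uniformly in $n$. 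Combining this with $\gamma_N \le M_\gamma$ and $b(N) \to 0$ (from $nb(n)\to 0$ in \Cref{ass.unif.ergo:0}) shows the remainder term tends to $0$; even the cruder bound $b(N) \le r$ shows it is bounded. Hence $\limsup_N \mathbb{E}_N[h_k] \le M_\gamma \sum_n a_n < \infty$, completing this direction.

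The main obstacle, and the only point requiring genuine care, is the identification of $u_n$ with the convergent series rather than an arbitrary solution of the recursion: the homogeneous equation $w_n = b(n+1)w_{n+1}$ admits unbounded solutions, so the recursion alone does not determine $u_n$. I would resolve this via the minimal-non-negative-solution characterisation (equivalently, a truncation and monotone-convergence argument), after which the uniform bound on $u_n$, and therefore on the remainder term, is routine.
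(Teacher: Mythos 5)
Your proposal is correct, and the substantive direction ($\sum_n a_n < \infty \Rightarrow \limsup_N \mathbb{E}_N[h_C] < \infty$) takes a genuinely different route from the paper. The paper handles the remainder term in \eqref{eq:hitt.from.N} by coupling the LBMJP with a homogeneous jump process of constant rate $\lambda$ and constant up-probability $p$, importing the classical biased random walk identity to get the uniform bound $\mathbb{E}_{n+1}[h_n] \leq \frac{1}{(1-2p)\lambda}$, and then bounds $b(N)\gamma_N \leq (N-k)\,b(N) \leq N b(N) \to 0$; that is, it estimates $\gamma_N$ crudely by the number of its summands and invokes the hypothesis $n b(n) \to 0$ from \Cref{ass.unif.ergo:0}. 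You instead bound $\gamma_n$ by the geometric series $(1-r)^{-1}$ with $r = p/(1-p) < 1$ and extract the uniform bound on $u_n := \mathbb{E}_{n+1}[h_n]$ from the explicit ladder series; note that your bound $u_n \leq \sup_m a(m)/(1-r)$, combined with $a(m) \leq \frac{1}{\lambda(1-p)}$ for $m > k$ (directly from \Cref{ass.unif.ergo:0}) and $1-r = \frac{1-2p}{1-p}$, reproduces exactly the paper's constant $\frac{1}{(1-2p)\lambda}$. A worthwhile byproduct of your geometric bound on $\gamma_n$ is that the hypothesis $nb(n) \to 0$ is never used in this implication: $p < 1/2$ and the rate lower bound suffice, so your argument is in this respect sharper than the paper's. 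The one delicate point, which you correctly flag, is the identification of $u_n$ with the convergent series $\tilde{u}_n := \sum_{j \geq 1} a(n+j)\prod_{i=1}^{j-1} b(n+i)$, and your minimal-solution sketch does close it: iterating \eqref{lemma.hitt.from.N:3} and discarding the nonnegative remainder gives $u_n \geq \tilde{u}_n$; conversely, for any nonnegative solution $(w_m)_{m \geq n}$ of the recursion $w_m = a(m+1) + b(m+1) w_{m+1}$ — in particular for $\tilde{u}$ — the partial sums $V_n := 0$, $V_m := \sum_{j=n}^{m-1} w_j$ satisfy the full one-target system $V_m = \lambda(m)^{-1} + p(m) V_{m+1} + q(m) V_{m-1}$ for $m > n$ (the verification reduces exactly to the first-order recursion), so the minimal non-negative solution characterisation of mean hitting times for continuous-time chains (e.g.\ Theorem 3.3.3 of \cite{norris:97}), evaluated at the starting state $m = n+1$ where $V_{n+1} = w_n$, yields $u_n \leq \tilde{u}_n$. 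With that step written out, your proof is complete; the paper's coupling buys the same uniform bound without any minimality discussion, while your route avoids constructing a coupling, stays entirely within the algebra of \Cref{lemma.hitt.from.N:1}, and weakens the hypotheses actually needed.
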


\begin{proof}[Proof of  \Cref{lemma.ass:1}]
Let $C \subset \mathbb{N}$ be a compact set and let $k=\sup C$. We first observe that for any $N > k$, for the LBMJP with $\gamma$ as in \eqref{srw.kernel:1}, $h_C=h_k$. We will therefore prove \eqref{come.down.infty:1} with $h_k$ instead of $h_C$.

First of all, the LHS of \eqref{come.down.infty:1} implies the RHS due to \Cref{non.uniform.ergo.result:1}.

Let us assume that the RHS of \eqref{come.down.infty:1} holds. Let $\lambda >0$ and $p \in (0,1/2)$ be as in Assumption \ref{ass.unif.ergo:0}. Let $\tilde{Y}$ be a Markov jump process defined on $\mathbb{N}$ with constant jump rate $\lambda$ and with jump law given by $\tilde{\Gamma}(x,dy)=p \ \delta_{x+1}+ \left( 1-p \right) \delta_{x-1}$. Let $\tilde{h}_n=\inf_{ t \geq 0} \left\{ \tilde{Y}_t=n \right\}$, and let $\tilde{X}$ be the discrete time Random Walk on $\mathbb{N}$ with constant probability of jumping to the right be $p$. 

We can couple the processes $\left(Y_t\right)_{t \geq 0}$ and $\left(\tilde{Y}_t\right)_{t \geq 0}$ by using the same $\exp(1)$ random variables to generate their jumping times and the same uniform distributions to generate their jumps. A simple comparison between the two coupled processes shows that for any $n \geq k$
   \begin{equation*}
       \mathbb{E}_{n+1}\left[ h_n \right] \leq \mathbb{E}_{n+1}\left[ \tilde{h}_n \right],
   \end{equation*}
   
   and therefore
   \begin{equation}\label{one.down.expectation.bounded:1}
       \mathbb{E}_{n+1}\left[ h_n \right] \leq \mathbb{E}_{n+1}\left[ \tilde{h}_n \right] \leq \frac{1}{\lambda} \mathbb{E}_{n+1}\left[ \inf_{m \in \mathbb{N}}\left\{ \tilde{X}_m=n  \right\} \right] = \frac{1}{\left(1-2p\right)\lambda},
   \end{equation}
   
where for the final equation we use standard results on the hitting times of random walk (e.g.\ \cite{levin.peres.wilmer:06}, above equation (2.14)). Furthermore, we observe that for all $n \geq k$,
$$
b(n)<1.
$$
Therefore, using \eqref{lemma.hitt.from.N:6}, we get that for $n \geq k$
\begin{align}\label{lemma.hitt.from.N:7}
b(n)\gamma_n&=b(n)+b(n)b(n-1)+\dots + b(n)b(n-1)\dots b(k+1) \nonumber \\
    &\leq (n-k) b(n) \leq n b(n) \xrightarrow{n \rightarrow +\infty} 0. 
\end{align}
Combining this with \eqref{one.down.expectation.bounded:1}, we get
\begin{equation}\label{lemma.hitt.from.N:5}
       \lim_{N \rightarrow \infty}b(N)\gamma_N \mathbb{E}_{N+1}\left[ h_N \right] = 0.
   \end{equation}
Finally, since for all $n \geq k$,$b(n)<1$  and $b(n)\xrightarrow{n \rightarrow \infty}0$, we have $\gamma(n) \xrightarrow{n \rightarrow \infty} 1$. Since $\sum_{n=1}^{\infty}a(n)<\infty$, we get
\begin{equation}\label{gamma.to.one:2}
    \sum_{n=k}^{\infty}a(n+1)\gamma(n+1) < \infty. 
    \end{equation}
    This, along with  \eqref{lemma.hitt.from.N:5} and \Cref{lemma.hitt.from.N:1} shows that 
    \begin{equation*}
        \limsup_{N \rightarrow \infty}\mathbb{E}_N\left[ h_k \right] < \infty.
    \end{equation*}
    This completes the proof.
\end{proof}

\begin{proof}[Proof of \Cref{theorem.uniform.ergo:00}]

From Assumption \ref{ass.unif.ergo:1} we get that there exists $ k \in \mathbb{N}$ such that for all $n > k$
\begin{equation*}
    \frac{\pi(n+1)}{\pi(n)} \leq 1 , \ \ \ \frac{\pi(n-1)}{\pi(n)} \geq \exp\left\{ a \beta (n-1)^{\beta-1} \right\}
\end{equation*}
and since $g$ is increasing, $g(1)=1$ and $g(t) \geq t^{\tilde{a}}$ for $t \geq 1$, we get
\begin{equation}\label{b.upper.bound:392}
     b(n)=\frac{g\left( \frac{\pi(n+1)}{\pi(n)} \right)}{g\left( \frac{\pi(n-1)}{\pi(n)} \right)} \leq \exp\left\{ -a\tilde{a}\beta (n-1)^{\beta-1} \right\},
\end{equation}
and therefore
\begin{equation}
\lim_{n \rightarrow +\infty} n b(n)=0.
\end{equation}
Furthermore, for all $n > k$
\begin{equation*}
\lambda(n)=g\left(\frac{\pi(n-1)}{\pi(n)}\right) + g\left(\frac{\pi(n+1)}{\pi(n)}\right) \geq \left(\frac{\pi(n-1)}{\pi(n)}\right)^{\tilde{a}} \geq \exp\left\{ \tilde{a} a \beta n^{\beta-1} \right\} \geq \exp\left\{ \tilde{a} a \beta k^{\beta-1} \right\}
\end{equation*}
and due to \eqref{b.upper.bound:392}
\begin{equation*}
p(n)=\frac{g\left(\frac{\pi(n+1)}{\pi(n)}\right)}{g\left(\frac{\pi(n-1)}{\pi(n)}\right) + g\left(\frac{\pi(n+1)}{\pi(n)}\right)} \leq \frac{g\left( \frac{\pi(n+1)}{\pi(n)} \right)}{g\left( \frac{\pi(n-1)}{\pi(n)} \right)} \leq \exp\left\{ -a\tilde{a}\beta k^{\beta-1} \right\} <\frac{1}{2},
\end{equation*}
where the last inequality holds by choosing $k$ sufficiently large. Therefore, Assumption 
\ref{ass.unif.ergo:0} holds. 

Furthermore, we observe that for $n > k$ 
$$
a(n)=\frac{2}{g\left( \frac{\pi(n-1)}{\pi(n)} \right)} \leq 2 \exp\left\{ -\tilde{a}a\beta\left( n-1 \right)^{\beta-1} \right\},
$$
and therefore $\sum_{n=1}^{\infty}a(n)<\infty$.

By \Cref{lemma.ass:1}, for any compact set $C$, there exists $M>0$ such that $ \mathbb{E}_N\left[ h_C \right] \leq M$ for all $N \in \mathbb{N}$. Note that by Assumption \ref{ass.unif.ergo:1} and the form of $\gamma$, the conditions of \Cref{compact.small:0} are satisfied, so every compact set $C$ is small. Therefore, all assumptions of  \Cref{prop:finite_hit} are satisfied and the result follows.
\end{proof}

\section{Proof of \Cref{scaling.limit:00}}\label{appendix:proof.scaling.limit:00}

\begin{proof}
Let $t(x,y)=\pi(y)/\pi(x)$ and let $b(x)=\log g(\exp\{ x \})$ so that $g(t)= \exp\left\{ b\left( \log t \right) \right\}$. For convenience, let us write
$\phi=\log f \in C^3$. Let $L^n$ and $L$ be the weak generators of $S^n$ and $S$ respectively. Recall from \Cref{weak.generator.domain:0} that for $f \in C^{\infty}_c(\mathbb{R}^d)$,
\begin{align*}
L^n f(x) 
&= \sigma_n^{-2} \mathbb{E}_{Y \sim N(x,\sigma_n^2I_d)}\left[ \left( f(Y)-f(x) \right) g \circ t(x,Y) \right] 
\\
&=
\sigma_n^{-1} \mathbb{E}_{Z \sim N(0,I_d)}\left[ \left( f(x+\sigma_n Z)-f(x) \right) \exp \left\{ b \left( \phi(x+\sigma_n Z)- \phi(x) \right) \right\} \right],
\end{align*}
while it is well known (see e.g.\ \cite{gelman.gilks.roberts:97}) that
\begin{equation*}
    Lf(x)=\frac{1}{2}\nabla \phi(x) \nabla f(x) + \frac{1}{2} \Delta f(x).
\end{equation*}
 Let $f \in C_c^{\infty}$ and $C$ a compact set. 
From $g(t)=tg(1/t)$ we get $g'(t)=g(1/t)-t^{-1} g'(1/t)$ so $g'(1)=g(1)/2=1/2$. Therefore $b'(0)=1/2$. Also $b(0)=0$. Noting that $\phi$ is continuous, a Taylor series expansion of $b$ around $0$ shows that $\exists \sigma_n^{(1)} \in (0,\sigma_n)$ such that
\begin{equation}\label{scaling.limit.sigma.1:1}
\exp \left\{ b \left( \phi(x+\sigma_n Z)-  \phi(x) \right) \right\} = \exp \left\{ \frac{1}{2} e(x,\sigma_n)  + \frac{1}{2}b''(e(x, \sigma_n^{(1)})) e(x,\sigma_n)^2 \right\} 
\end{equation}
for some $e(x,\sigma_n):=\phi(x+\sigma_n Z) - \phi(x)$.
Using another Taylor series expansion, we observe that for any $\sigma_n$, $\exists \sigma_n^{(3)} \in (0, \sigma_n)$ such that 
\begin{equation}\label{taylor.for.error:1}
e(x,\sigma_n)= \sigma_n \nabla \phi (x)\cdot Z + \frac{1}{2}\sigma_n^2 Z^T \nabla^2 \phi (x+ \sigma_n^{(3)} Z ) Z.
\end{equation}
 Therefore, the facts that $-M I_d \leq \nabla^2 \phi (x) \leq M I_d$ and $\| \nabla \phi \|_{\infty,C}=\sup_{x \in C}|\nabla \phi (x)|< \infty$ imply that for any $Z$
 \begin{equation*}
 \sup_{x \in C} |e(x,\sigma_n)| \leq \sigma_n  \| \nabla \phi \|_{\infty,C} \|  Z \|_2 + \frac{1}{2} \sigma_n^2 M \| Z \|_2^2,
 \end{equation*}
 which in turn implies that for all $k>0$ $\exists M_k>0$ such that
 \begin{equation}\label{bound.phi.error:1}
 \mathbb{E}_{Z \sim N(0,I_d)}\left[ \sup_{x \in C} |e(x,\sigma_n)|^k \right] \leq M_k \sigma_n^k.
 \end{equation}
Now $\exp\{ u \}=1+u + \exp\{ \xi \} u^2/2$, for some $\xi \in (-u,u)$. Therefore, using (\ref{scaling.limit.sigma.1:1}) we get that there exists a $\xi \in \left( \min \left\{ 0, b(e(x,\sigma_n))  \right\}, \max \left\{ 0, b(e(x,\sigma_n))  \right\} \right)$ such that
\begin{align*}
&\exp \left\{ b \left( \phi(x+\sigma_n Z)-  \phi(x) \right) \right\}= \exp \left\{ \frac{1}{2} e(x,\sigma_n)  + \frac{1}{2}b''(e(x, \sigma_n^{(1)})) e(x,\sigma_n)^2 \right\} \\
&=1+ \frac{1}{2} e(x,\sigma_n)  + \frac{1}{2}b''(e(x, \sigma_n^{(1)})) e(x,\sigma_n)^2 + \frac{1}{2} \exp \left\{ \xi \right\} \left( \frac{1}{2} e(x,\sigma_n)  + \frac{1}{2}b''(e(x, \sigma_n^{(1)})) e(x,\sigma_n)^2 \right)^2 \\
&= 1+ \frac{1}{2} e(x,\sigma_n)  + \left( \frac{1}{2}b''(e(x, \sigma_n^{(1)})) + \frac{1}{8} \exp\{ \xi \} \right) e(x,\sigma_n)^2 +\frac{1}{4}\exp\{ \xi \} b''(e(x,\sigma_n^{(1)})) e(x,\sigma_n)^3 \\
&+ \frac{1}{8} \exp\{ \xi \} \left[ b''(e(x,\sigma_n^{(1)})) \right]^2 e(x,\sigma_n)^4.
\end{align*}
At the same time, from a Taylor expansion up to second degree, $\exists \sigma_n^{(2)} \in (0, \sigma_n)$ such that  for any $Z \in \mathbb{R}^d$
$$
f(x+\sigma_n Z) -f(x)
= 
\sigma_n \nabla f(x) \cdot Z + \frac{1}{2}\sigma_n^2 Z^T \nabla^2f(x)Z + \frac{1}{6} \sigma_n^3\sum_{i,j,k=1}^d\partial_i \partial_j \partial_k f(x+\sigma_n^{(2)}Z) Z_i Z_j Z_k.
$$
Overall this gives

\begin{align}\label{scaling.limit.huge.sum:1}
( f(x+\sigma_n Z)& - f(x) ) \exp\left\{ b \left( \phi(x+\sigma_n Z) -\phi(x) \right) \right\} = \\
&\sigma_n \nabla f(x) \cdot Z+\frac{1}{2}\sigma_n^2 Z^T \nabla^2f(x)Z + \frac{1}{6} \sigma_n^3\sum_{i,j,k=1}^d\partial_i \partial_j \partial_k f(x+\sigma_n^{(2)}Z) Z_i Z_j Z_k \nonumber \\
&+ \frac{1}{2} \sigma_n e(x,\sigma_n) \nabla f(x) \cdot Z +\frac{1}{4} \sigma_n^2 e(x,\sigma_n) Z^T \nabla^2f(x)Z \nonumber \\
&+ \frac{1}{12} \sigma_n^3 e(x,\sigma_n) \sum_{i,j,k=1}^d\partial_i \partial_j \partial_k f(x+\sigma_n^{(2)}Z) Z_i Z_j Z_k \nonumber \\
&+ \sigma_n e(x,\sigma_n)^2 \left( \frac{1}{2}b''(e(x, \sigma_n^{(1)})) + \frac{1}{8} \exp\{ \xi \} \right) \nabla f(x) \cdot Z \nonumber \\
&+ \frac{1}{2}\sigma_n^2 e(x,\sigma_n)^2 \left( \frac{1}{2}b''(e(x, \sigma_n^{(1)})) + \frac{1}{8} \exp\{ \xi \} \right) Z^T \nabla^2f(x)Z \nonumber \\
&+ \frac{1}{6} \sigma_n^3 e(x,\sigma_n)^2 \left( \frac{1}{2}b''(e(x, \sigma_n^1)) + \frac{1}{8} \exp\{ \xi \} \right) \sum_{i,j,k=1}^d\partial_i \partial_j \partial_k f(x+\sigma_n^{(2)}Z) Z_i Z_j Z_k \nonumber \\
&+ \frac{1}{4} \sigma_n e(x,\sigma_n)^3 \exp\{ \xi \} b''(e(x,\sigma_n^{(1)})) \nabla f(x) \cdot Z \nonumber \\
&+ \frac{1}{8} \sigma_n^2 e(x,\sigma_n)^3\exp\{ \xi \} b''(e(x,\sigma_n^{(1)})) Z^T \nabla^2f(x)Z \nonumber \\
&+ \frac{1}{24} \sigma_n^3 e(x,\sigma_n)^3 \exp\{ \xi \} b''(e(x,\sigma_n^{(1)})) \sum_{i,j,k=1}^d\partial_i \partial_j \partial_k f(x+\sigma_n^{(2)}Z) Z_i Z_j Z_k \nonumber \\
&+ \frac{1}{8} \sigma_n e(x,\sigma_n)^4 \exp\{ \xi \} \left[ b''(e(x,\sigma_n^{(1)})) \right]^2 \nabla f(x) \cdot Z \nonumber \\
&+ \frac{1}{16} \sigma_n^2 e(x,\sigma_n)^4 \exp\{ \xi \} \left[ b''(e(x,\sigma_n^{(1)})) \right]^2 Z^T \nabla^2f(x)Z \nonumber \\
&+ \frac{1}{48} \sigma_n^3 e(x,\sigma_n)^4 \exp\{ \xi \} \left[ b''(e(x,\sigma_n^{(1)})) \right]^2 \sum_{i,j,k=1}^d\partial_i \partial_j \partial_k f(x+\sigma_n^{(2)}Z) Z_i Z_j Z_k. \nonumber
\end{align}
Noting that $\mathbb{E}_{Z \sim N(0,I_d)}\left[ \nabla f(x) \cdot Z \right]=0$ and that $\mathbb{E}_{Z \in N(0,I_d)}\left[ Z^T \nabla^2 f(x) Z \right]=\Delta f(x)$ we get that
\begin{align}\label{scaling.limit.middle.equation:1}
&L^n(x)=\sigma_n^{-2}\mathbb{E}_{Z \sim N(0,I_d)}\left[ \left( f(x+\sigma_n Z)-f(x) \right) \exp \left\{ b \left( \phi(x+\sigma_n Z)- \phi(x) \right) \right\} \right]\\
\nonumber &=\frac{1}{2} \Delta f(x)  + \frac{1}{2} \sigma_n^{-1} \mathbb{E}_{Z \sim N(0,I_d)}\left[ e(x,\sigma_n) \nabla f(x) \cdot Z \right] +\sigma_n^{-2}A(x,\sigma_n),
\end{align}
where $A(x,\sigma_n)$ is expectation of the remaining terms of the sum in (\ref{scaling.limit.huge.sum:1}) (i.e. except for first, second and fourth term). 

We will now control the term $A(x,\sigma_n)$ and prove it to be of order $\sigma_n^3$. First of all, recall that $\xi \leq \max\{ 0 , b(e(x,\sigma_n)) \}$. Furthermore, from  \Cref{lem:gbound1} we have $\min \{ 1 , t \} \leq g(t) \leq \max \{ 1, t \}$, therefore for any $k \in \mathbb{N}$,
\begin{align}\label{scaling.limit.bound.xi:1}
\exp\{ k \xi \} &\leq \exp\left\{ \max \left\{ 0 , k b(e(x,\sigma_n))  \right\} \right\} = \exp \left\{ \max \left\{ 0, k \log g(\exp \{ e(x,\sigma_n) \} ) \right\} \right\}\\ \nonumber
&= \max \left\{ 1, \left( g\left( \exp\{ e(x,\sigma_n) \} \right) \right)^k \right\} \leq \max \left\{ 1, \left( \exp\{ e(x,\sigma_n) \} \right)^k  \right\} \\ \nonumber
&= \max \left\{ 1, \left( \frac{\pi(x+\sigma_n Z)}{\pi(x)} \right)^k  \right\} \leq \max \left\{ 1 , \frac{\tilde{M}^k}{\pi(x)^k} \right\},
\end{align}
which is bounded uniformly on $x$ on the compact set $C$ since $\pi \in C^0$ and $\pi(x)>0$ for all $x \in \mathbb{R}^d$.

At the same time, in order to control $A(x,\sigma_n)$ we need to control the term $b''(e(x,\sigma_n^{(1)}))$ appearing in the sum.
In order to do this, note that from (\ref{scaling.limit.sigma.1:1}) we have that
\begin{equation*}
b''(e(x,\sigma_n^{(1)}))=2\frac{b(e(x,\sigma_n))-\frac{1}{2}e(x,\sigma_n)}{e(x,\sigma_n)^2}
\end{equation*}
when $x, Z \in \mathbb{R}^d$ such that $e(x,\sigma_n) \neq 0$ and we can define $b''(e(x,\sigma_n^{(1)}))=b''(0)$ when $e(x,\sigma_n)=0$. 
Let $z(e)=2\frac{b(e)-\frac{1}{2}e}{e^2}$. Since $b(0)=0$ and $b'(0)=\frac{1}{2}$, from L'Hospital we get
\begin{equation*}
\lim_{e \rightarrow 0}z(e)=b''(0)=\frac{1}{2}+g''(1)-g''(1)^2.
\end{equation*}
Therefore, there exists $e_0$ such that for any $x \in C$ and $Z \in \mathbb{R}^d$ with $e(x,\sigma_n)<e_0$,
\begin{equation*}
\left| 2\frac{b(e(x,\sigma_n))-\frac{1}{2}e(x,\sigma_n)}{e(x,\sigma_n)^2} \right| \leq 1+|g''(1)|+g''(1)^2.
\end{equation*}
On the other hand, using a similar argument as in (\ref{scaling.limit.bound.xi:1}) we get that 
\begin{equation*}
\sup_{x \in C}|b(e(x,\sigma))|<\infty,
\end{equation*}
which implies that 
\begin{equation*}
\sup_{x \in C, Z \in \mathbb{R}^d, e(x,\sigma_n) \geq e_0}\left| 2 \frac{b(e(x,\sigma_n))-\frac{1}{2}e(x,\sigma_n)}{e(x,\sigma_n)^2} \right| <\infty .
\end{equation*}
Therefore, 
\begin{equation}\label{scaling.limit.b2:1}
\sup_{x \in C, Z \in \mathbb{R}^d}b''(e(x,\sigma_n^{(1)}))=\sup_{x \in C, Z \in \mathbb{R}^d} \left| 2\frac{b(e(x,\sigma_n))-\frac{1}{2}e(x,\sigma_n)}{e(x,\sigma_n)^2} \right| < \infty.
\end{equation}

Using the fact that $f \in C^{\infty}_c$, that $Z \sim N(0,I_d)$, that for all $n \in \mathbb{N}$ $e(x,\sigma_n)$ has all the moments finite (due to (\ref{bound.phi.error:1})), using (\ref{scaling.limit.b2:1}) and (\ref{bound.phi.error:1}), and by carefully considering all the terms of the sum in $A(x,\sigma_n)$, we get that
\begin{equation}\label{scaling.limit.bound.on.A:1}
\limsup_{n \rightarrow \infty}\frac{\sup_{x \in C}|A(x,\sigma_n)|}{\sigma_n^3}<\infty. 
\end{equation}
Furthermore, recall that due to (\ref{taylor.for.error:1}) we have
\begin{align*}
&e(x,\sigma_n) \nabla f(x) \cdot Z = \sigma_n \left( \nabla f(x) \cdot Z \right)  \left( \nabla \phi (x) \cdot Z \right) + \frac{1}{2}  \sigma_n^2 \left( Z^T\nabla^2 \phi(x+\sigma_n^{(3)}Z) Z \right) \left( \nabla f (x) \cdot Z  \right) \\
&= \sigma_n \left( \sum_{i,j=1}^d \partial_i f(x) \partial_j \phi(x) Z_i Z_j \right)  + \frac{1}{2}   \sigma_n^2 \sum_{i,j,k=1}^d \partial_k f(x) \partial_i \partial_j \phi(x+\sigma_n^{(3)}Z) Z_i Z_j Z_k
\end{align*}
and therefore
 \begin{align}\label{scaling.limit.hamiltonian.part:1}
 \mathbb{E}_{Z \sim N(0,I_d)}\left[ e(x,\sigma_n) \nabla f(x) \cdot Z \right]
 = &\sigma_n \nabla \phi(x) \cdot \nabla f(x)
 \\
 &+ \sigma_n^2 \mathbb{E}_{Z \sim N(0,I_d)}\left[ \frac{1}{2} \sum_{i,j,k=1}^d \partial_k f(x)  \partial_i \partial_j \phi(x+\sigma_n^{(3)}Z) Z_i Z_j Z_k \right],
 \end{align}
and using again the fact that for all $y \in \mathbb{R}^d$, $-M I_d \preceq \nabla^2\phi(y) \preceq MI_d$ we get that
\begin{equation}\label{scaling.limit.hamiltonian.part:2}
 \sup_{x \in C} \mathbb{E}_{Z \sim N(0,I_d)}\left[ \frac{1}{2} \sum_{i,j,k=1}^d \partial_k f(x)  \partial_i \partial_j \phi(x+\sigma_n^{(3)}Z) Z_i Z_j Z_k \right] <\infty
\end{equation}
Overall using (\ref{scaling.limit.middle.equation:1}),  (\ref{scaling.limit.bound.on.A:1}), (\ref{scaling.limit.hamiltonian.part:1}) and (\ref{scaling.limit.hamiltonian.part:2}), we get
\begin{equation*}
L^nf(x)=\left(  \frac{1}{2} \nabla \phi(x) \cdot \nabla f(x) + \frac{1}{2} \Delta f(x)  \right) + \sigma_n^{-2}B(x,\sigma_n^3)
\end{equation*}
with
\begin{equation*}
\limsup_{n \rightarrow \infty}\frac{\sup_{x \in C}|B(x,\sigma_n)|}{\sigma_n^3}<\infty. 
\end{equation*}
For all $n$, the Martingale problem $(\mu,L^n,C^{\infty}_c)$ is solved by the jump process, due to \Cref{weak.generator.domain:0}.

All assumptions of Theorem 8.1 of \cite{monmarche.rousset.zitt:22} are satisfied and the result follows.
\end{proof}

\maketitle

\end{document}